\newcommand{\mathbold}{\mathbb}
\newcommand{\bZ}{{\mathbold Z}}
\newcommand{\und}{\underline}
\newcommand{\tn}{\otimes}           
\newcommand{\sO}{{\mathcal{O}}}
\newcommand{\cB}{{\mathcal{B}}}
\newcommand{\Mod}[1]{\mathsf{mod}({{#1}})}
\newcommand{\CoMod}[1]{\mathsf{comod}({{#1}})}
\newcommand{\Hom}{\mathrm{Hom}}
\newcommand{\Ind}{\mathrm{Ind}}
\newcommand{\End}{{\mathrm{End}}}
\newcommand{\rightlabelxyarrows}[2]{{\ar@<0.7ex>^-{#1}[r]\ar@<-0.7ex>_-{#2}[r]}}
\newcommand{\displaylabelfork}[6]{{     \entrymodifiers={+!!<0pt,\fontdimen22\textfont2>}
        \def\objectstyle{\displaystyle}
\xymatrix{{#1} \ar^-{#2}[r] & {#3} \ar@<0.7ex>^-{#4}[r]\ar@<-0.7ex>_-{#5}[r] & {#6}}}}
\newcommand{\predisplaylabelfork}[6]{{{#1} \ar^-{#2}[r] & {#3} \ar@<0.7ex>^-{#4}[r]\ar@<-0.7ex>_-{#5}[r] & {#6}}}
\newcommand{\Vect}{\mathcal{V}}
\newcommand{\V}{\mathcal{V}}
\newcommand{\fS}{\mathfrak{S}}
\newcommand{\sC}{\mathcal{C}}
\newcommand{\cY}{\mathcal{Y}}
\newcommand{\cH}{\mathcal{H}}
\newcommand{\PP}{\mathcal{P}}
\newcommand\kk{\Bbbk}
\newcommand{\bw}{\bigwedge}
\newcommand{\la}{\lambda}
\newtheorem{theorem}{Theorem}[section]
\newtheorem{lemma}[theorem]{Lemma}
\newtheorem{proposition}[theorem]{Proposition}
\newtheorem{corollary}[theorem]{Corollary}
\theoremstyle{definition}
\newtheorem{definition}[theorem]{Definition}
\newtheorem{remark}[theorem]{Remark}
\newtheorem*{acknowledgement}{Acknowledgement}
\title{Quantum Polynomial Functors }
\author{Jiuzu Hong%
  \thanks{\texttt{jiuzu@email.unc.edu}}}
\affil{Mathematics Department,\\ UNC Chapel Hill }
\author{Oded Yacobi%
  \thanks{\texttt{oded.yacobi@sydney.edu.au}}}
\affil{School of Mathematics and Statistics,\\ University of Sydney}
\date{\today}
\begin{document}


%


\maketitle

\abstract 
\text{  }We construct a category of quantum polynomial functors which deforms Friedlander and Suslin's category of strict polynomial functors.  The main aim of this paper is to develop from first principles the basic structural properties of this category (duality, projective generators, braiding etc.) in analogy with classical strict polynomial functors.  We then apply the work of Hashimoto and Hayashi in this context to construct quantum Schur/Weyl functors, and use this to provide new and easy derivations of quantum $({\rm GL}_m,{\rm GL}_n)$ duality, along with other results in quantum invariant theory.

\section{Introduction}


Let $\kk$ be a commutative ring and choose $q\in\kk^\times$.  The category $\PP_q^d$ of  quantum polynomial functors of homogeneous degree $d$ consists of functors $\Gamma_q^d\V\to\V$, where $\V$ is the category of finite projective $\kk$-modules, and $\Gamma_q^d\V$ is the category with objects natural numbers and morphisms given by
$$
\Hom_{\Gamma_q^d\V}(m,n):=\Hom_{\cB_d}(V_m^{\tn d},V_n^{\tn d}).
$$      
Here $\cB_d$ is the Artin braid group, $V_m$ denotes the free $\kk$-module of rank $n$ and the action of $\cB_d$ on $V_m^{\tn d}$ is given in Section \ref{sec-qmatrices}.  We think of the category $\Gamma_q^d\V$ as the category of standard Yang-Baxter spaces $(V_n,R_n)$ (Section \ref{sec-qmatrices}), and the morphisms can be viewed as degree $d$ regular functions on the quantum Hom-space between standard Yang-Baxter spaces (although, as is usual in quantum algebra, only the regular functions are actually defined).

The purpose of this paper is to develop the basic structure theory of the category $\PP_q^d$ in analogy with Friendlander and Suslin's work \cite{FS}.
%
%
We first need to  develop a theory of  quantum linear algebra in great generality using Yang-Baxter spaces, and this is undertaken in Section \ref{Qlinalg}.  

We define morphisms between Yang-Baxter spaces over an algebra, and provide  a universal  characterization of quantum Hom-space algebra in Lemma \ref{universal_q_operator}. From this formalism we can derive many results about quantum Hom-space algebras functorialy.  In particular, the dual of the Hom-space between two Yang-Baxter spaces of degree $d$ is identified with certain braid group intertwiners, generalizing a well-known description of $q$-Schur algebras (Proposition \ref{Natural_Iso}).  We construct the algebra of quantum $m\times n$  matrix space by specializing this theory to standard Yang-Baxter spaces.

We note that when the Yang-Baxter spaces are equal the quantum Hom space algebras  have appeared in \cite{Ph},\cite{HH}, but in the generality studied here these are new.    
We further remark  that the general formalism of quantum linear algebra we develop builds  on the work of Hashimoto-Hayashi \cite{HH}, but it is not the same. They only consider the quantum Hom-space algebra between the same Yang-Baxter spaces, whereas for us it is crucial to build in morphisms between different Yang-Baxter spaces.

After the basic of quantum multilinear algebra are in place, we set out to develop the theory of quantum polynomial functors.  To begin, the functor $\Gamma_q^{d,m}:\Gamma_q^d\V\to\V$ given by $n \mapsto \Hom_{\cB_d}(V_m^{\otimes d}, V_n^{\otimes d})$ is called the quantum divided power functor.  Theorem \ref{Rep_thm} states that $\Gamma_q^{d,m}$ is a projective generator of $\PP_q^d$ when $m\geq d$.  
This uses a finite generation property for quantum polynomial functors, which we prove in Proposition \ref{prop-gen}.

Theorem \ref{Rep_thm} has several corollaries.  It implies for instance that when $n \geq d$ we have an equivalence $$\PP_q^d \cong \Mod{S_q(n,d)},$$ between the category of quantum polynomial functors of degree $d$ and the category of modules over the q-Schur algebra $S_q(n,d)$ that are finite projective over $\kk$.  It also allows us to construct functors which represent weight spaces for representation of the quantum general linear group (Corollary \ref{Weight_Rep}).  We note that from this and Proposition \ref{Natural_Iso} one can immediately deduce the double centralizer property of Jimbo-Schur Weyl duality ( Corollary \ref{Double_centrallizer}).  

In fact Theorem \ref{Rep_thm} is also needed to show that the R-matrix of the quantum general linear group are suitably functorial, that is they are natural with respect to morphisms in $\Gamma_q^d\V$, and thereby define a braiding on $\PP_q:=\bigoplus_{d=0}^\infty \PP_q^d$ (Theorem \ref{Braiding}).  We emphasize that these results are elementary consequences of the \textit{definition} of quantum polynomial functors.

In Section \ref{sec: schurfunctors} we use the extensive work of Hashimoto and Hayashi \cite{HH} to define quantum Weyl/Schur functors.  We show that these objects are dual to each other in Theorem \ref{SW-functor}, and use them to describe the simple objects in $\PP_q$.   Finally in Section \ref{sec-invariants} we specialize to the case when $\kk$ is a field of characteristic zero and $q$ is generic,  and give new and simplified proofs of the invariant theory of quantum ${\rm GL}_n$.  We use Theorem \ref{Rep_thm} to give an easy proof of the duality between quantum ${\rm GL}_m$ and  ${\rm GL}_n$.  We also formulate and derive the equivalence of this duality to the quantum first fundamental theorem and Jimbo-Schur-Weyl duality.  

We remark that  quantum $({\rm GL}_m,{\rm GL}_n)$-duality is due to  Zhang \cite{Zh} and  Ph\'ung  \cite{Ph}.  (Zhang also derives Jimbo-Schur-Weyl duality from $({\rm GL}_m,{\rm GL}_n)$-duality.)  The quantum FFT that we prove first appears in \cite{GLR} with a much more complicated proof. (Other versions of the quantum FFT appear in \cite{Ph} and  \cite{LZZ}.)  We remark also that our approach to quantum invariant theory applies to the other settings where a theory of strict polynomial functors has been constructed (cf. Remark \ref{LastRemark}).  

Finally, an important  problem concerning $\PP_q$ remains open: to define composition of quantum polynomial functors.  In Section \ref{sec:final} we discuss obstructions to defining composition in our setting, and speculate on possible generalizations of our constructions that would allow for composition, and thus provide the sought-after quantum plethysm.  We hope this paper is a significant step in this program.      

This work is inspired by our  previous works \cite{HY1,HY2,HTY} on polynoimal functors and categorifications.

\begin{acknowledgement}
We thank Joseph Bernstein for his encouragement and important discussions during the initial stages of this project, and also  Roger Howe and Andrew Mathas for helpful conversations.  We also thank Antoine Touz\'e for introducing us to polynomial functors.  
The authors would also like to thank the anonymous  referees for many helpful suggestions that greatly aided the clarity of this paper.
The second author is supported by an Australian Research Council DECRA fellowship.  
\end{acknowledgement}

\section{Quantum matrix spaces}
The  theory of quantum $n\times n$ matrix space is well-known and highly developed  (cf.  \cite{DJ2}, \cite{PW}, \cite{Ma}). In order to develop a theory of quantum polynomial functors, we need to generalize this theory, namely we study the quantum $m\times n$-matrix space where $m$ is not necessarily equal to $n$.  

We first develop a version of quantum linear algebra in even greater generality using general Yang-Baxter spaces (see below).
We introduce the notion of morphisms of Yang-Baxter spaces over algebras, and study their compositions and quantum Hom-spaces.   
The quantum matrix space of interest are then  special cases of these more general quantum algebras, and algebraic structures such as products and coproducts are easily obtained  from the general constructions.
\label{Qlinalg}
\subsection{Quantum linear algebras}
Let $\kk$ be a commutative ring. For any two $\kk$-modules $V,W$, throughout this paper  $\Hom(V,W)$ denotes $\Hom_\kk(V,W)$ for brevity,  and similarly $V\otimes W$ denotes the tensor product $V\otimes_\kk W$.  For any $\kk$-module $V$, $V^*$ denotes the dual space $\Hom_\kk(V,\kk)$.

Let $\cB_d$ be the Artin braid group: it is generated by $T_1,T_2,\cdots, T_{d-1}$ subject to the  relations
\begin{equation}
\label{eq:braid}
\begin{aligned}
T_iT_{j}&=T_jT_i \qquad \text{ if } |i-j|>1, \\
T_{i}T_{i+1}T_i&=T_{i+1}T_iT_{i+1} 
\end{aligned}
\end{equation}
Let $\fS_d$ denote the symmetric group on $d$ letters.  For any $w\in \fS_d$ we define $T_w\in\cB_d$ by choosing a reduced expression $w=s_{i_1}\cdots s_{i_\ell}$ and setting $T_w=T_{i_1}\cdots T_{i_\ell}$.

For any free $\kk$-module $V$ of finite rank,  a \textbf{Yang-Baxter operator} is a $\kk$-linear operator
$R:V^{\otimes 2}\to V^{\otimes 2}$ such that
$R$ satisfies the Yang-Baxter equation, i.e. the following equation holds in $\End(V^{\otimes 3})$:
$$R_{12}R_{23}R_{12}=R_{23}R_{12}R_{23}, $$
where $R_{12}=R\otimes 1_V$ and $R_{23}=1_V\otimes R$. 

Such a pair $(V,R)$ is called a \textbf{Yang-Baxter space}.   To $(V,R)$ we associate the right representation $\rho_{d,V}:\cB_d \to \End(V^{\otimes d})$ via the formula
$$
T_i \mapsto 1_{V^{\otimes i}}\otimes R \otimes 1_{V^{\otimes^{d-i-1}}}.
$$
Often we suppress $R$ in the notation and refer to a free $\kk$-module $V$ as a ``Yang-Baxter space''.  In this case, the operator $R$ is implicit and when necessary is denoted $R_V$.   For now the operator $R$ is quite general, in Section \ref{sec-qmatrices} we will specialize to a specific (standard) set of $R$-matrices.  

Now consider two Yang-Baxter spacess $V,W$.   Let $T(V,W)$ be the tensor algebra of $\Hom(V,W)$, which is graded 
$$T(V,W)=\bigoplus_{d\geq 0} T(V,W)_{d} ,$$
where 
$$T(V,W)_d:=\Hom(V,W)^{\otimes d}\simeq  \Hom(V^{\otimes d}, W^{\otimes d}). $$
Let $I(V,W)$ be the two sided ideal generated by 
$$R(V,W):=\{X \circ  R_V-R_W\circ X\text{ }| \text{ } X\in \Hom(V^{\otimes 2},W^{\otimes 2})\}.  $$
 The ideal $I(V,W)$ is homogeneous 
 $$I(V,W)=\bigoplus_{d\geq 0}I(V,W)_d, $$
 where $I(V,W)_d$ is spanned by 
 $$\Hom(V,W)^{\otimes i-1}\otimes R(V,W)\otimes \Hom(V,W)^{\otimes d-i-1}$$
for $i=1,2,..., d-1$.  We define 
 \begin{equation}
 \label{quantum_algebra}
 A(V,W): =T(V,W)/I(V,W) .
 \end{equation}
 The algebra $A(V,W)$ is called the \textbf{quantum Hom-space algebra} from $W$ to $V$ (cf. \cite[\S 3]{Ph} and \cite[\S 3]{HH}).  
 \begin{remark}
 While it may seem confusing to denote by $A(V,W)$ the morphisms \textit{from} $W$ \textit{to} $V$, this is inherent to the quantum point-of-view.  One should think of $A(V,W)$ as the ring of regular functions on the space of morphisms from $W$ to $V$  (even though - as is typical in the quantum setting - the latter is not defined).  Classically, i.e. when the Yang-Baxer operators are just the flip maps, $A(V,W)$ equals $S(\Hom(V,W))$, which is of course isomorphic to the regular functions on $\Hom(W,V)$.  
 \end{remark}
 
$A(V,W)$ has a natural grading
$$A(V,W)=\bigoplus_{d\geq 0} A(V,W)_d ,$$
where 
\begin{equation}
\label{q_Hom_coin}
A(V,W)_d=  T(V,W)_d/ I(V,W)_d.
\end{equation}

Let $C$ be a $\kk$-algebra with multiplication $m: C\times C\to C$. We introduce the following notion.
\begin{definition}
A \textbf{Yang-Baxter morphism}  from $(V,R_V)$ to $(W,R_W)$ over $C$ is a   
 $\kk$-linear map $P:V\to W\otimes C$ such that the following diagram commutes:
\begin{equation}
\xymatrix{
V^{\otimes 2} \ar[d]^{R_V}\ar[r]^>>>>>{P^{(2)}} & W^{\otimes 2}\otimes C
\ar[d]^{R_W\otimes 1_C}\\
V^{\otimes 2} \ar[r]^>>>>>{P^{(2)}} & W^{\otimes 2}\otimes C
}
\end{equation}
Here $P^{(2)}$ is the  composition:
$$ V^{\otimes 2}\xrightarrow{P\otimes P} W\otimes C\otimes W\otimes C \xrightarrow{\text{flip}} W^{\otimes 2}\otimes C\otimes C \xrightarrow{1\otimes m} W^{\otimes 2}\otimes C.$$
\end{definition}

Let $\{v_i\}$ (resp. $\{w_j\}$) be a basis of $V$ (resp. $W$).  With this choice of basis, we can write the operator $R_V$  in terms of a matrix $(R_{V,ij}^{k\ell})$,
\begin{equation}
\label{R-matrix}
v_i\otimes v_j\mapsto  \sum_{k \ell}  R_{V, ij}^{k\ell} w_k\otimes w_\ell.
\end{equation}
Similarly we can express $R_W$ in terms of the matrix  $(R_{W, ij}^{k\ell})$. 

The following lemma is immediate from the definition of a Yang-Baxter morphism.
\begin{lemma}
\label{quadratic_relations}
Given any $\kk$-linear map $P:  V\to W\otimes C$, with 
$$P(v_i)=\sum_j  w_j\otimes P_{ji}, $$
for any $i,j$, and $P_{ji}\in C$.  The map $P$ is a Yang-Baxter morphism over $C$ if and only if for any $i,j,p,q$, the following quadratic relation holds
\begin{equation}
\label{Quadratic_relations}
\sum_{k,\ell}  R^{pq}_{W, k\ell }   P_{ki} P_{\ell j} =\sum_{k,\ell}  R_{V, ij}^{k\ell}  P_{pk} P_{q\ell} . 
\end{equation}
\end{lemma}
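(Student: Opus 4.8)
The plan is to unwind the defining commutative square of a Yang-Baxter morphism in the chosen bases and compare coefficients. The first step is to record a coordinate formula for $P^{(2)}$. Writing $P(v_i)=\sum_a w_a\otimes P_{ai}$ as in the statement and tracing $v_i\otimes v_j$ through $P\otimes P$, then the flip (which swaps the second and third tensor slots), then $1\otimes m$, one obtains
$$P^{(2)}(v_i\otimes v_j)=\sum_{a,b} w_a\otimes w_b\otimes P_{ai}P_{bj},$$
where the product $P_{ai}P_{bj}$ is computed in $C$ in exactly this order; the order is dictated by the flip followed by $1\otimes m$, and it matters because $C$ need not be commutative.

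Next I would compute both legs of the square on the basis vector $v_i\otimes v_j$, using the matrices $(R_{V,ij}^{k\ell})$ and $(R_{W,ij}^{k\ell})$ of $R_V$ and $R_W$ introduced above, so that $R_V(v_i\otimes v_j)=\sum_{k,\ell}R_{V,ij}^{k\ell}v_k\otimes v_\ell$ and $R_W(w_k\otimes w_\ell)=\sum_{p,q}R_{W,k\ell}^{pq}w_p\otimes w_q$. Applying $R_V$ first and then the formula for $P^{(2)}$ gives $\sum_{k,\ell,a,b}R_{V,ij}^{k\ell}\,w_a\otimes w_b\otimes P_{ak}P_{b\ell}$, whose $w_p\otimes w_q$-component is $\sum_{k,\ell}R_{V,ij}^{k\ell}P_{pk}P_{q\ell}\in C$. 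Applying $P^{(2)}$ first and then $R_W\otimes 1_C$ gives $\sum_{a,b,p,q}R_{W,ab}^{pq}\,w_p\otimes w_q\otimes P_{ai}P_{bj}$, whose $w_p\otimes w_q$-component is $\sum_{k,\ell}R_{W,k\ell}^{pq}P_{ki}P_{\ell j}\in C$ after renaming the summation indices $a,b$ to $k,\ell$.

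Finally, since $\{w_p\otimes w_q\}_{p,q}$ is a $\kk$-basis of $W^{\otimes 2}$, the module $W^{\otimes 2}\otimes C$ decomposes as $\bigoplus_{p,q}(w_p\otimes w_q)\otimes C$, so two of its elements coincide precisely when all of their $C$-components agree. As both composites in the square are $\kk$-linear, the square commutes if and only if it commutes on every basis vector $v_i\otimes v_j$, that is, if and only if the two $w_p\otimes w_q$-components computed above agree in $C$ for all $i,j,p,q$ --- which is precisely \eqref{Quadratic_relations}. There is essentially no obstacle here: the statement is a direct translation of the definition into coordinates, and the only genuine care needed is in tracking the four tensor factors through the flip map and preserving the order of multiplication in the noncommutative algebra $C$.
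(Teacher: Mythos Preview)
Your proof is correct and is precisely the computation the paper has in mind: the paper itself gives no argument beyond declaring the lemma ``immediate from the definition of a Yang-Baxter morphism,'' and what you have written is exactly that unwinding in coordinates.
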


 Let $\delta_{V,W}:V\to W\otimes \Hom(W,V)$ be the canonical map induced from the identity map $\Hom(W,V)\to \Hom(W,V)$.  
We can precisely describe it: Let $\{v_i\}$ be a basis of $V$ and $\{w_j\}$ be a basis of $W$, then $\delta_{V,W}$ is given by 
$$ v_i\mapsto \sum_{j}  w_j\otimes \phi_{ji},$$
for any $i$, where $\phi_{ji}: W\to V$ is the map 
\begin{equation}
\label{phi_function}
\phi_{ji}(w_k)= \begin{cases} v_i   \quad \text{ if } k=j \\  0   \quad \text{ otherwise } .  \end{cases}
\end{equation}
It is easy to check that  $\delta_{V,W}$ doesn't depend on the choice of bases.   

The map $\delta_{V,W}$ in further  induces a $\kk$-linear operator 
\begin{equation}
\label{Universal_YB_morphism}
\delta_{V,W}:V\to W\otimes A(W,V),
\end{equation}
since $ A(W,V)_1=\Hom(W,V)$ is a $\kk$-submodule of $A(W,V)$.    
  \begin{lemma}
  \label{universal_q_operator}
The  map $\delta_{V,W}: V\to W\otimes A(W,V)$  is a Yang-Baxter morphism over $A(W,V)$.
\end{lemma}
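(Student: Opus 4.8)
The plan is to verify the defining commutative diagram for $\delta_{V,W}$ directly using the explicit coordinate description already available, and to do this it is cleanest to apply Lemma \ref{quadratic_relations}. Concretely, writing $\delta_{V,W}(v_i) = \sum_j w_j \otimes \phi_{ji}$ with $\phi_{ji} \in \Hom(W,V) = A(W,V)_1$, the lemma reduces the claim to checking, for all indices $i,j,p,q$, the quadratic relation
\begin{equation*}
\sum_{k,\ell} R^{pq}_{V,k\ell}\, \phi_{ki}\phi_{\ell j} = \sum_{k,\ell} R^{k\ell}_{W,ij}\, \phi_{pk}\phi_{q\ell}
\end{equation*}
inside $A(W,V)$ — note the roles of $V$ and $W$ are interchanged relative to the statement of Lemma \ref{quadratic_relations} because $\delta_{V,W}$ is a morphism \emph{from} $V$ \emph{to} $W$ over the algebra $A(W,V)$, i.e.\ here the source plays the role of ``$V$'' and the target the role of ``$W$''. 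So first I would carefully transcribe the lemma with the correct substitution.

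Next, I would compute both sides in the tensor algebra $T(W,V)$, i.e.\ before passing to the quotient. Using the formula \eqref{phi_function} for $\phi_{ji}$, the product $\phi_{ki}\phi_{\ell j}$ is the element $\phi_{ki}\otimes \phi_{\ell j}$ of $\Hom(W,V)^{\otimes 2} = \Hom(W^{\otimes 2}, V^{\otimes 2})$, which sends $w_a \otimes w_b \mapsto \delta_{ak}\delta_{b\ell}\, v_i \otimes v_j$. Hence the left-hand side, as an element of $\Hom(W^{\otimes 2},V^{\otimes 2})$, is the map $w_a \otimes w_b \mapsto R^{pq}_{V,ab}\, v_i \otimes v_j$, and the right-hand side is the map $w_a \otimes w_b \mapsto \sum_{k,\ell} R^{k\ell}_{W,ij} \delta_{ap}\delta_{bq}\, v_k \otimes v_\ell$. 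So the difference of the two sides, viewed in $T(W,V)_2$, is precisely an element of the form $X \circ R_V - R_W \circ X$ for a suitable $X \in \Hom(V^{\otimes 2}, W^{\otimes 2})$ — indeed, tracing through the definition of $R(W,V)$, the relevant $X$ is the elementary map $v_i \otimes v_j \mapsto w_p \otimes w_q$ (up to bookkeeping of which basis vectors are fixed). Therefore the difference lies in the ideal $I(W,V)_2$ and vanishes in $A(W,V)_2$.

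The main obstacle, such as it is, is purely notational: keeping straight the two levels of index juggling — the $V$/$W$ swap induced by the contravariance built into $A(-,-)$, and the identification $\Hom(W,V)^{\otimes d} \simeq \Hom(W^{\otimes d}, V^{\otimes d})$ under which the $\phi$'s become elementary matrix units. There is no real mathematical difficulty; once the correct substitution into Lemma \ref{quadratic_relations} is made and one recognizes that the generators $R(W,V)$ of the ideal $I(W,V)$ are \emph{exactly} encoding the quadratic relations \eqref{Quadratic_relations} for the universal family $(\phi_{ji})$, the result is immediate. I would present it by first restating the target relation, then giving the one-line computation identifying $\sum_{k,\ell} R^{k\ell}_{W,ij}\phi_{pk}\phi_{q\ell} - \sum_{k,\ell} R^{pq}_{V,k\ell}\phi_{ki}\phi_{\ell j}$ with an explicit element of $R(W,V) \subseteq I(W,V)_2$, and conclude that it is zero in $A(W,V)$. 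It is also worth remarking that this lemma is what justifies calling $\delta_{V,W}$ the \emph{universal} Yang-Baxter morphism — the quadratic relations it satisfies are, by construction, the minimal ones, so every Yang-Baxter morphism $V \to W \otimes C$ factors uniquely through it; but that universal property, while the conceptual point, is not needed for the proof of the statement itself.
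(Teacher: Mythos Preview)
Your approach is exactly the paper's: apply Lemma~\ref{quadratic_relations} to $\delta_{V,W}$ and observe that the resulting quadratic relations among the $\phi_{ji}$ are precisely the defining relations of $A(W,V)$. The paper states this in one line; you add a more explicit identification of the difference with a generator of the ideal, which is fine.

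However, you have swapped $V$ and $W$ throughout. In Lemma~\ref{quadratic_relations} the map is $P:V\to W\otimes C$, and $\delta_{V,W}:V\to W\otimes A(W,V)$ matches this template \emph{directly}, with no interchange needed: source $V$, target $W$, $C=A(W,V)$. The correct relation to verify is therefore
\[
\sum_{k,\ell} R^{pq}_{W,k\ell}\,\phi_{ki}\phi_{\ell j} \;=\; \sum_{k,\ell} R^{k\ell}_{V,ij}\,\phi_{pk}\phi_{q\ell},
\]
with $R_W$ on the left and $R_V$ on the right, the reverse of what you wrote. Correspondingly, the ideal $I(W,V)$ is generated by $Y\circ R_W - R_V\circ Y$ for $Y\in\Hom(W^{\otimes 2},V^{\otimes 2})$, and the element witnessing the relation is $Y=\phi_{pi}\otimes\phi_{qj}:w_a\otimes w_b\mapsto \delta_{ap}\delta_{bq}\,v_i\otimes v_j$, not a map $V^{\otimes 2}\to W^{\otimes 2}$ as you wrote. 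Your sentence ``the source plays the role of $V$ and the target the role of $W$'' is in fact correct and contradicts your claim that the roles are interchanged. Once you fix this bookkeeping, your argument goes through and coincides with the paper's.
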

\begin{proof}
Let $\{v_i\}$ (resp. $\{w_j\}$) be a basis of $V$ (resp. $W$).  By the construction of $\delta_{V,W}$, the map $\delta_{V,W}^{(2)}:  V^{\otimes 2} \to W^{\otimes 2}\otimes A(W,V)$ is given by 
$$v_i\otimes v_j\mapsto   \sum_{k,\ell}  w_k\otimes w_\ell  \otimes   \phi_{ki} \phi_{\ell j},$$
for any $i,j$.  
By Lemma \ref{quadratic_relations}, we need to check that $\{\phi_{ki}\}$ satisfies the following quadratic relations:
$$\sum_{k,\ell}  R^{pq}_{W, k\ell }   \phi_{ki} \phi_{\ell j} =\sum_{k,\ell}  R_{V, ij}^{k\ell}  \phi_{pk}\phi_{q\ell} ,$$
for any $i,j,p,q$. 
These are  exactly the quadratic relations  defining the algebra $A(W,V)$ in (\ref{quantum_algebra}).
\end{proof}

The following lemma shows that the quantum Hom-space algebra is characterized by a universal property.  In the case where $V=W$ are the same Yang-Baxter space this lemma follows from Theorem 3.2 in \cite{HH}. 
\begin{lemma}
\label{Uni_q_op}
Let $(V,R_V),(W,R_W)$ be two Yang-Baxter spaces.  Then the map $\delta_{V,W}: V\to W\otimes A(W,V)$ is the unique Yang-Baxter morphism  such that for any Yang-Baxter morphism $P:V\to W\otimes C$ over a $\kk$-algebra $C$,   there exists a unique morphism of algebras $\tilde P: A(W,V)\to C$ such that the following diagram commutes:
\begin{equation}
\label{q_Hom_algebra}
\xymatrix{
V \ar[r]^<<<<<{\delta_{V,W}} \ar[dr]_{P} & W\otimes A(W,V) \ar[d]^{1_W \otimes \tilde P}\\
& W\otimes C
}
\end{equation}
\end{lemma}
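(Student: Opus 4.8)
The plan is to prove this as a standard universal-property argument, exploiting the fact that $A(W,V)$ is a quotient of a tensor algebra and is therefore free on generators subject only to the quadratic relations. First I would establish existence of $\tilde P$. Given a Yang-Baxter morphism $P\colon V\to W\otimes C$, write $P(v_i)=\sum_j w_j\otimes P_{ji}$ with $P_{ji}\in C$ as in Lemma \ref{quadratic_relations}. Since $A(W,V)$ is generated by the degree-one part $A(W,V)_1=\Hom(W,V)$, and that part is spanned by the maps $\phi_{ji}$ of \eqref{phi_function}, there is at most one algebra map $\tilde P$ with $\tilde P(\phi_{ji})=P_{ji}$; this settles uniqueness of $\tilde P$ once we know it is well defined. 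To get well-definedness, I would first define an algebra homomorphism $T(W,V)\to C$ out of the tensor algebra by sending $\phi_{ji}\mapsto P_{ji}$ (using the universal property of the tensor algebra on the $\kk$-module $\Hom(W,V)$, of which the $\phi_{ji}$ form a basis), and then check that this homomorphism kills the ideal $I(W,V)$. Since $I(W,V)$ is generated by the quadratic relations, it suffices to check that the images $P_{ji}$ satisfy exactly those relations — but by Lemma \ref{quadratic_relations} this is precisely the statement that $P$ is a Yang-Baxter morphism, which is our hypothesis. Hence $\tilde P\colon A(W,V)\to C$ is well defined.

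Next I would verify that the triangle \eqref{q_Hom_algebra} commutes. Tracing a basis vector $v_i$ around the diagram: $\delta_{V,W}(v_i)=\sum_j w_j\otimes\phi_{ji}$, and applying $1_W\otimes\tilde P$ gives $\sum_j w_j\otimes P_{ji}=P(v_i)$, so the two composites agree on a basis, hence everywhere. This also shows $\tilde P$ is compatible in the sense required. Finally, for the uniqueness clause of the lemma, suppose $\tilde P'\colon A(W,V)\to C$ is another algebra map making the triangle commute. Commutativity on $v_i$ forces $\sum_j w_j\otimes\tilde P'(\phi_{ji})=P(v_i)=\sum_j w_j\otimes P_{ji}$, and since $\{w_j\}$ is a basis of $W$ we get $\tilde P'(\phi_{ji})=P_{ji}=\tilde P(\phi_{ji})$ for all $i,j$; as the $\phi_{ji}$ generate $A(W,V)$ as an algebra, $\tilde P'=\tilde P$. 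The assertion that $\delta_{V,W}$ is itself the \emph{unique} Yang-Baxter morphism with this universal property is then formal: any two objects representing the same universal problem are canonically isomorphic, and one checks the isomorphism is the identity by feeding $P=\delta_{V,W}$ into the property.

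I do not expect a serious obstacle here; the only point requiring a little care is the bookkeeping in passing between the two conventions for $P^{(2)}$ (the flip-then-multiply composition in the definition) and the explicit quadratic relation \eqref{Quadratic_relations} — but this bookkeeping has already been done in Lemma \ref{quadratic_relations}, so I would simply invoke it rather than redo it. The one genuinely substantive input, that the defining relations of $A(W,V)$ match the Yang-Baxter morphism condition on the structure coefficients, is exactly Lemma \ref{universal_q_operator} and its proof, so the present lemma is essentially a repackaging of that computation as a representability statement. The mild subtlety worth flagging explicitly in the write-up is that one must know $\Hom(W,V)\to A(W,V)$ is \emph{injective} so that the generators $\phi_{ji}$ of $A(W,V)_1$ really are linearly independent and the assignment $\phi_{ji}\mapsto P_{ji}$ is unambiguous; this is noted in the excerpt just before \eqref{Universal_YB_morphism}.
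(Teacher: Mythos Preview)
Your argument is correct and follows essentially the same route as the paper: define $\tilde P$ on generators by $\phi_{ji}\mapsto P_{ji}$, use Lemma \ref{quadratic_relations} to see that these satisfy the defining quadratic relations of $A(W,V)$ so that $\tilde P$ descends from the tensor algebra, and deduce uniqueness from the fact that the $\phi_{ji}$ generate. The paper's proof is terser but identical in substance; your added remarks on checking the triangle and on the formal uniqueness of $\delta_{V,W}$ are fine elaborations (the injectivity concern you flag is harmless but not actually needed, since you already route the construction through $T(W,V)$).
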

\begin{proof}

Assume  the map $P: V\to W\otimes C$ is given by 
$$v_i\mapsto \sum_j  w_j\otimes P_{ji}, $$
for any $i,j$, and $P_{ij}\in C$. 
Then $(P_{ji})$ satisfies the quadratic relations in (\ref{Quadratic_relations}).     We define a map $\tilde{P}: \Hom(W,V)\to C$ such that $\tilde{P}(\phi_{ji})=P_{ji} $.  Then this map uniquely extends to a homomorphism of algebras $\tilde{P}:A(W,V)\to C$, since $(\phi_{ji})$ and $(P_{ji})$ satisfies the same quadratic relations.      Clearly we have the commutative diagram (\ref{q_Hom_algebra}).
The uniqueness of $\delta_{V,W}$ is clear.
\end{proof}

Given three Yang-Baxter spaces $V,W,U$ and Yang-Baxter morphisms  $P: V\to W\otimes C$ and $Q: W\to U\otimes D$ over algebras $C$ and $D$ respectively, we denote by $Q\circ P: V\to U\otimes D\otimes  C$ the composition of $P$ and $Q$.
\begin{lemma}
\label{Com_q_op}
The composition $Q\circ P$ is  a Yang-Baxter morphism from $(V,R_V)$ to $(U,R_U)$ over $D\otimes C$.
\end{lemma}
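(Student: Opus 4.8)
The plan is to verify the defining commutative diagram for $Q \circ P$ directly, by chasing the square
\[
\xymatrix{
V^{\otimes 2} \ar[d]_{R_V} \ar[r]^<<<<<{(Q\circ P)^{(2)}} & U^{\otimes 2}\otimes D\otimes C \ar[d]^{R_U\otimes 1} \\
V^{\otimes 2} \ar[r]^<<<<<{(Q\circ P)^{(2)}} & U^{\otimes 2}\otimes D\otimes C
}
\]
using the two squares we already have for $P$ (over $C$) and for $Q$ (over $D$). The key observation is that $(Q\circ P)^{(2)}$ factors through $(W^{\otimes 2}\otimes C)$: concretely, $(Q\circ P)^{(2)}$ equals $P^{(2)}$ followed by applying $Q^{(2)}$ in the $W^{\otimes 2}$-slot and then shuffling the tensor factors of $D$ and $C$ into the right order. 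So I would first write $(Q\circ P)^{(2)} = (Q^{(2)}\otimes 1_C)\circ P^{(2)}$ modulo the relevant flip/reshuffle of the scalar factors (being careful that this reshuffle is exactly the natural associativity-plus-braiding of vector spaces and commutes with everything in sight because it only permutes tensor legs). Then the outer square decomposes as the $P$-square stacked on top of a $Q$-square-tensored-with-$1_C$, and commutativity of each piece gives commutativity of the whole.

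A cleaner route, which I would actually prefer to present, is to avoid the diagram chase entirely and invoke Lemma~\ref{quadratic_relations} together with the coordinate description. Pick bases $\{v_i\}, \{w_j\}, \{u_k\}$ of $V, W, U$. Write $P(v_i) = \sum_j w_j\otimes P_{ji}$ and $Q(w_j) = \sum_k u_k\otimes Q_{kj}$; then $(Q\circ P)(v_i) = \sum_k u_k\otimes\bigl(\sum_j Q_{kj}\otimes P_{ji}\bigr)$, i.e. the structure constants of $Q\circ P$ over $D\otimes C$ are $(Q\circ P)_{ki} = \sum_j Q_{kj}\otimes P_{ji}$. By Lemma~\ref{quadratic_relations}, $P$ being a Yang-Baxter morphism means the $P_{ji}$ satisfy the quadratic relations with $R_V$ on one side and $R_W$ on the other, and likewise $Q$ being one means the $Q_{kj}$ satisfy them with $R_W$ and $R_U$. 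Substituting, one computes
\[
\sum_{k,\ell} R^{pq}_{U,k\ell}\,(Q\circ P)_{ki}(Q\circ P)_{\ell j}
= \sum_{k,\ell} R^{pq}_{U,k\ell} \sum_{a,b} Q_{ka}Q_{\ell b}\otimes P_{ai}P_{bj},
\]
apply the $Q$-relation to push $R_U$ through to $R_W$ (it acts only on the $D$-factors, so it passes the $\otimes$ freely), then apply the $P$-relation to push $R_W$ through to $R_V$, landing on $\sum_{k,\ell} R^{k\ell}_{V,ij}\,(Q\circ P)_{pk}(Q\circ P)_{q\ell}$. Invoking Lemma~\ref{quadratic_relations} in the other direction then shows $Q\circ P$ is a Yang-Baxter morphism over $D\otimes C$.

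The only genuinely fiddly point — and the one I would flag as the main obstacle, though it is really bookkeeping rather than a true difficulty — is keeping the order of the tensor factors $D\otimes C$ straight and making sure the two applications of the quadratic relations really are legal. The subtlety is that $R_U$ and $R_W$ act on the "space" legs $U^{\otimes 2}$, $W^{\otimes 2}$ only, while the scalars $Q_{kj}$ live in $D$ and $P_{ji}$ in $C$; because the quadratic relation of Lemma~\ref{quadratic_relations} is an identity \emph{inside the algebra} ($D$ or $C$ respectively) that gets multiplied against scalar matrix entries $R$, and because $D$ and $C$ do not interact (they sit in separate tensor slots), the two relations can be applied one after the other without any commutation issue. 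I would spell this out in one sentence and leave the index manipulation to the reader, exactly as the preceding lemmas do.
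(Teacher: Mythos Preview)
Your proposal is correct and takes essentially the same approach as the paper: pick bases, express the matrix entries of $Q\circ P$ as $\sum_j Q_{kj}\otimes P_{ji}$, reduce via Lemma~\ref{quadratic_relations} to the quadratic identity, then apply the $Q$-relation to trade $R_U$ for $R_W$ and the $P$-relation to trade $R_W$ for $R_V$. The paper carries out exactly this index computation explicitly (your diagram-chase alternative is not used there), and your remark about the $D$ and $C$ factors sitting in separate tensor slots is precisely the reason the two substitutions can be done independently.
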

\begin{proof}
We choose a basis $\{v_i\}$ in $V$,  $\{w_j\}$ in $W$, and $\{u_k\}$ in $U$.  Assume $P$ can be represented by the matrix $(P_{ij})$ with respect to the basis $\{v_i\}$ and $\{w_j\}$, and $Q$ can be represented by the matrix $(Q_{jk})$ with respect to $\{w_j\}$ and $\{u_k\}$. 
Then the composition $Q\circ P: V\to U\otimes D\otimes C$ can be represented by the matrix $(\sum_{s} Q_{js} \otimes P_{si})$.  
By Lemma \ref{quadratic_relations}, it is enough to check that 
\begin{equation}
\label{composition}
\sum_{k,\ell}  R^{pq}_{U, k \ell} (\sum_s Q_{ks}\otimes P_{si} ) (\sum_{t} Q_{\ell t}\otimes P_{t j})= \sum_{k,\ell}  R^{k \ell}_{V, ij}  (\sum_s Q_{ps}\otimes P_{sk} ) (\sum_t  Q_{qt}\otimes P_{t\ell}) ,
\end{equation}
for any $i,j,p,q$.

Note that the left hand side is equal to 
\begin{equation}
\label{Term_1}
\sum_{s,t} (\sum_{k,\ell}  R^{pq}_{U, k\ell} Q_{ks}Q_{\ell t})\otimes P_{si}P_{tj}   
\end{equation}
Since $Q$ is a Yang-Baxter morphism over $D$,  (\ref{Term_1}) is equal to 
\begin{equation}
\label{Term_2}
\sum_{s,t} ( \sum_{k,\ell}  R^{k\ell}_{W,st} Q_{pk}Q_{q\ell}) \otimes P_{si}P_{tj}.  
\end{equation}
By changing the order of summations, (\ref{Term_2}) is equal to 
\begin{equation}
\label{Term_3}
\sum_{k,\ell}  Q_{pk}Q_{q\ell} \otimes  (  \sum_{s,t}  R^{k\ell}_{W,st} P_{si}P_{tj} ) .
\end{equation}
Since $P$ is  a Yang-Baxter morphism over $C$,  (\ref{Term_3}) is equal to 
\begin{equation}
\label{Term_4}
\sum_{k,\ell}  Q_{pk}Q_{q\ell} \otimes  (\sum_{s,t}  R^{st}_{V,ij} P_{ks}P_{\ell t} ). 
\end{equation}
By switching indices $k, \ell$ and $s, t$, (\ref{Term_4}) is exactly the right hand side of (\ref{composition}).
\end{proof}
By Lemma \ref{Com_q_op}, the operator $\delta_{W,V}\circ\delta_{U,W}$ is a Yang-Baxter morphism over $$A(V,W)\otimes A(W,U).$$
Applying Lemma \ref{Uni_q_op} to  $\delta_{W,V}\circ\delta_{U,W}$ we obtain a morphism of algebras
\begin{equation}
\label{co-mult}
\Delta_{VWU}: A(V,U)\to A(V,W)\otimes A(W,U).  
\end{equation}
It preserves degree, i.e. for each $d\geq 0$, we have 
$$ \Delta_{VWU}: A(V,U)_d\to A(V,W)_d\otimes A(W,U)_d. $$
By the universal property of Yang-Baxter morphisms, $\Delta_{*,*,*}$ satisfies co-associativity, i.e. for any Yang-Baxter spaces $V,W,U,Z$, we have 
\begin{equation}
\label{co-associativity}
(1\otimes \Delta_{WUZ})\circ \Delta_{VWZ}=(\Delta_{VWU}\otimes 1)\circ \Delta_{VUZ} .
\end{equation}
Let 
\begin{equation}
\label{Schur}
S(V,W;d):=(A(W,V)_d)^* .
\end{equation}
This space, a kind of ``rectangular generalized Schur algebra'', will be used throughout this paper.  We will see below (Section \ref{sec-qmatrices}) that when $V=W$ are the standard Yang-Baxter spaces, then $S(V,V;d)$ is the $q$-Schur algebra.

From $\Delta_{V,W,U}$, we obtain by duality a $\kk$-bilinear map 
\begin{equation}
\label{Schur_bilinear}
m_{UWV}: S(W,V;d)\times S(U,W;d)\to S(U,V;d). 
\end{equation}
For any $a\in S(W,V;d)$ and $b\in S(U,W;d)$, we denote by $b\circ a$ the element $m_{UWV}(a,b)\in S(U,V;d)$.  It is given by the following composition 
\begin{equation}
\xymatrix{
A(V,U)_d\ar[r]^<<<<<{\Delta_{VWU}}\ar[dr]^{b\circ a} & A(V,W)_d\otimes A(W,U)_d\ar[d]^{a\otimes b}\\
& \kk
}.
\end{equation}

By co-associativity of $\Delta_{*,*,*}$,  we naturally have associativity of $m_{*,*,*}$, i.e.  for any Yang-Baxter spaces $V,W,U,Z$,
\begin{equation}
\label{associativity}
m_{ZWV}\circ (1\times m_{ZUW})=m_{VWU}\circ (m_{UWV} \times 1 ) .
\end{equation}

The following proposition generalizes \cite[Theorem 11.3.1]{PW} to the case where $V\neq W$ and the Hecke algebra is replaced by the braid group.  (Note that this proof is simpler; in particular no dimension arguments are used and hence we don't need to produce a basis for $S(V,W;d) $.)     
\begin{proposition}
\label{Natural_Iso}
Let $V,W$ be Yang-Baxter spaces.  Then there exists a natural isomorphism 
$$S(V,W;d)\simeq \Hom_{\cB_d}(V^{\otimes d},W^{\otimes d}). $$
\end{proposition}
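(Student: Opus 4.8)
The plan is to peel $S(V,W;d)$ back to its definition $(A(W,V)_d)^\ast$ and recognise the dual directly as a space of braid intertwiners, using only left exactness of $\Hom(-,\kk)$ and the self-duality of $\Hom$-spaces of finite free modules. Concretely: first, writing $A(W,V)_d=T(W,V)_d/I(W,V)_d$ and applying $\Hom(-,\kk)$ to the short exact sequence $0\to I(W,V)_d\to T(W,V)_d\to A(W,V)_d\to 0$, I identify $S(V,W;d)$ canonically with the annihilator $\{\phi\in(T(W,V)_d)^\ast:\phi|_{I(W,V)_d}=0\}$; throughout I use the canonical identification $T(W,V)_d\simeq\Hom(W^{\otimes d},V^{\otimes d})$ from the definition of $T$.

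Next I rewrite $I(W,V)_d$ in operator terms. A generating pure tensor $f_1\otimes\cdots\otimes f_{i-1}\otimes(X_0\circ R_W-R_V\circ X_0)\otimes g_1\otimes\cdots\otimes g_{d-i-1}$ (with $f_a,g_b\in\Hom(W,V)$ and $X_0\in\Hom(W^{\otimes 2},V^{\otimes 2})$) becomes, under the above identification, $F\circ\rho_{d,W}(T_i)-\rho_{d,V}(T_i)\circ F$, where $F=f_1\otimes\cdots\otimes f_{i-1}\otimes X_0\otimes g_1\otimes\cdots\otimes g_{d-i-1}\in\Hom(W^{\otimes d},V^{\otimes d})$: inserting $R_W$ (resp.\ $R_V$) into the relevant two tensor slots is exactly precomposition with $\rho_{d,W}(T_i)$ (resp.\ postcomposition with $\rho_{d,V}(T_i)$). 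Since the operators $F$ of this form span $\Hom(W^{\otimes d},V^{\otimes d})$ and $F\mapsto F\circ\rho_{d,W}(T_i)-\rho_{d,V}(T_i)\circ F$ is $\kk$-linear, this yields
\[
I(W,V)_d=\sum_{i=1}^{d-1}\bigl\{\,F\circ\rho_{d,W}(T_i)-\rho_{d,V}(T_i)\circ F\;:\;F\in\Hom(W^{\otimes d},V^{\otimes d})\,\bigr\}.
\]

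Finally I pair with the trace. Because $V,W$ are finite free, the trace pairing $\langle G,\theta\rangle=\operatorname{tr}(\theta\circ G)=\operatorname{tr}(G\circ\theta)$ on $\Hom(W^{\otimes d},V^{\otimes d})\times\Hom(V^{\otimes d},W^{\otimes d})$ is perfect, so it gives a canonical isomorphism $(T(W,V)_d)^\ast\simeq\Hom(V^{\otimes d},W^{\otimes d})$; write $\phi\leftrightarrow\theta$. Cyclicity of the trace gives
\[
\phi\bigl(F\circ\rho_{d,W}(T_i)-\rho_{d,V}(T_i)\circ F\bigr)=\operatorname{tr}\bigl((\rho_{d,W}(T_i)\circ\theta-\theta\circ\rho_{d,V}(T_i))\circ F\bigr),
\]
and by perfectness this vanishes for all $F$ exactly when $\rho_{d,W}(T_i)\circ\theta=\theta\circ\rho_{d,V}(T_i)$. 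Hence $\phi$ annihilates $I(W,V)_d$ iff $\theta$ commutes with every generator $T_i$, i.e.\ iff $\theta\in\Hom_{\cB_d}(V^{\otimes d},W^{\otimes d})$, which is the desired isomorphism. Since it is built from the intrinsic trace pairing it is independent of all choices of bases, hence natural in $V$ and $W$; and a short additional check, using the construction of $\Delta_{VWU}$ via Lemma~\ref{Uni_q_op}, shows it carries the multiplication $m_{UWV}$ to composition of braid intertwiners.

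The step most in need of care is the operator-theoretic rewriting of $I(W,V)_d$: one must verify that, after the identification $T(W,V)_d\simeq\Hom(W^{\otimes d},V^{\otimes d})$, the pure-tensor generators of the quadratic ideal span precisely the ``commutators'' $F\circ\rho_{d,W}(T_i)-\rho_{d,V}(T_i)\circ F$, with the tensor slot matched to the braid generator and with $X_0$ free to range over all of $\Hom(W^{\otimes 2},V^{\otimes 2})$. Everything else is formal; the one nontrivial input is perfectness of the trace pairing, which uses that $V$ and $W$ are finite free and is exactly what lets us avoid exhibiting a basis of $S(V,W;d)$ as in \cite{PW}.
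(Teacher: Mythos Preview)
Your proof is correct and follows essentially the same approach as the paper: identify $A(W,V)_d$ as the coinvariant-type quotient of $\Hom(W^{\otimes d},V^{\otimes d})$, use the trace pairing to dualise, and recognise the annihilator as $\Hom_{\cB_d}(V^{\otimes d},W^{\otimes d})$. The paper phrases this as ``the $\cB_d$-action on $\Hom(W^{\otimes d},V^{\otimes d})$ is contragredient to that on $\Hom(V^{\otimes d},W^{\otimes d})$, so the dual of coinvariants is invariants,'' whereas you compute the annihilator of $I(W,V)_d$ explicitly via trace cyclicity; these are the same argument unpacked to different depths. Your extra sentence about compatibility with $m_{UWV}$ is the content of the paper's separate Proposition~\ref{Iso_Com}, not part of this proposition.
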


\begin{proof}
We define a representation of $\cB_d$  on $\Hom(V^{\otimes d},W^{\otimes
d})$, where for each $i$, $T_i$ is the following opeartor 
$$
X\mapsto X\circ \rho_{d,V}(T_i)^{-1}-\rho_{d,W}(T_i)^{-1}\circ X, 
$$
for $X\in \Hom(V^{\otimes d},W^{\otimes d})$.  Recall that $\rho_{d,V}$ (resp. $\rho_{d,W}$) denotes the right action of $\cB_d$ on $V^{\otimes d}$ (resp. $W^{\otimes d}$).  Note that
$
\Hom_{\cB_d}(V^{\otimes d},W^{\otimes d})$ is the just the invariant space  $\Hom(V^{\otimes d},W^{\otimes d})^{\cB_d}$ of $\cB_d$ on $\Hom(V^{\otimes d},W^{\otimes d})$.

Similarly we define a representation of  $\cB_d$ on $\Hom(W^{\otimes d},V^{\otimes
d})$, where for each $i$, $T_i$ is the following operator
$$
Y\mapsto Y \circ \rho_{d,W}(T_i)-\rho_{d,V}(T_i)\circ Y, 
$$
for $Y\in \Hom(W^{\otimes d},V^{\otimes d})$.  Note that from (\ref{q_Hom_coin}) we have that 
$A(W,V)_d$ is the coinvariant space  $\Hom(W^{\otimes d},V^{\otimes d})_{\cB_d}$ of $\cB_d$ on  $\Hom(W^{\otimes d},V^{\otimes d})$. 

Now consider the following perfect  non-degenerate pairing 
$$\langle\cdot,\cdot\rangle:\Hom_\kk(V^{\otimes d},W^{\otimes d})\times \Hom(W^{\otimes d},V^{\otimes d})\to \kk , $$
given by $\langle X,Y\rangle:=\rm{trace}(Y\circ X) $. 
It is clear that the representation of $\cB_d$ on $\Hom(W^{\otimes d},V^{\otimes d})$ is contragradient  to the representation of $\cB_d$ on $\Hom(V^{\otimes d},W^{\otimes d})$ with respect to the above non-degenerate pairing.  Therefore we have a natural isomorphism 
$$(\Hom(W^{\otimes d},V^{\otimes d})_{\cB_d})^* \simeq \Hom(V^{\otimes d},W^{\otimes d})^{\cB_d} ,$$
i.e.  there exist a natural isomorphism 
$$S(V,W;d)\simeq \Hom_{\cB_d}(V^{\otimes d},W^{\otimes d}). $$

\end{proof}

Let $\Delta: \Hom(V,U)\to \Hom_\kk(V,W)\otimes \Hom_\kk(W,U)$ be the map 
$$\Delta(\phi_{ji})=\sum_s  \phi_{js}\otimes \phi_{si} ,$$
where $\phi_{ji}\in\Hom(V,U)$, $\phi_{si}\in \Hom(V,W)$ and $\phi_{js}\in \Hom(W,U)$ are defined as in \ref{phi_function} after a choice of bases for $V,W,U$. 
The map $\Delta$ induces a map of tensor algebras $\Delta:  T(V,U)\to T(V,W)\otimes T(W,U)$.  

\begin{proposition}
\label{Iso_Com}
Given three Yang-Baxter spaces $V,W,U$, then the following diagram commutes:
\begin{equation}
\label{Schur_commutativity}
\xymatrix{
S(W,V;d)\otimes S(U,W;d)\ar[r]\ar[d] & \Hom_{\cB_d}(W^{\otimes d},V^{\otimes d})\otimes \Hom_{\cB_d}(U^{\otimes d},W^{\otimes d}) \ar[d]\\
S(U,V;d) \ar[r] &\Hom_{\cB_d}(U^{\otimes d},V^{\otimes d})
}.
\end{equation}
\end{proposition}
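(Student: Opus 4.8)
The plan is to show that both routes around the square~(\ref{Schur_commutativity}) are induced, via duality, by the same map on the relevant degree-$d$ pieces of tensor algebras, so that commutativity reduces to an identity that holds already before passing to the quotient algebras $A(-,-)$. Concretely, I would first unwind the left vertical arrow: by definition it is the dual of $\Delta_{VWU}\colon A(V,U)_d\to A(V,W)_d\otimes A(W,U)_d$ from~(\ref{co-mult}), which in turn (by Lemma~\ref{Uni_q_op} applied to $\delta_{W,V}\circ\delta_{U,W}$) is the algebra map sending $\phi_{ji}$ to $\sum_s \phi_{js}\otimes\phi_{si}$; that is, $\Delta_{VWU}$ is exactly the descent to $A$ of the tensor-algebra map $\Delta\colon T(V,U)\to T(V,W)\otimes T(W,U)$ defined just before the statement. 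Dually, $m_{UWV}(a,b)=b\circ a$ is the composite $A(V,U)_d \xrightarrow{\Delta_{VWU}} A(V,W)_d\otimes A(W,U)_d \xrightarrow{a\otimes b}\kk$.

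Next I would identify the right vertical arrow. Under the natural isomorphism of Proposition~\ref{Natural_Iso}, $S(W,V;d)\simeq\Hom_{\cB_d}(W^{\otimes d},V^{\otimes d})$, and similarly for the other two corners. So the right vertical arrow should be realized as honest composition of $\cB_d$-equivariant maps: $(f,g)\mapsto f\circ g$ for $g\in\Hom_{\cB_d}(U^{\otimes d},W^{\otimes d})$ and $f\in\Hom_{\cB_d}(W^{\otimes d},V^{\otimes d})$. The key step, then, is to trace through the explicit isomorphism from Proposition~\ref{Natural_Iso}. That isomorphism is the composite $(A(W,V)_d)^* \simeq (\Hom(W^{\otimes d},V^{\otimes d})_{\cB_d})^* \simeq \Hom(V^{\otimes d},W^{\otimes d})^{\cB_d}$, where the last step uses the trace pairing $\langle X,Y\rangle=\mathrm{trace}(Y\circ X)$. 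So a functional $a\in S(W,V;d)$ corresponds to the unique $X_a\in\Hom_{\cB_d}(V^{\otimes d},W^{\otimes d})$ with $a(\bar Y)=\mathrm{trace}(Y\circ X_a)$ for all $Y\in\Hom(W^{\otimes d},V^{\otimes d})$. I would then simply compute: given $a\leftrightarrow X_a$ and $b\leftrightarrow X_b$, check that $m_{UWV}(a,b)\leftrightarrow X_a\circ X_b$. Pairing $m_{UWV}(a,b)$ against an arbitrary $\bar Z\in A(U,V)_d$ and using $\langle Z,\,\cdot\,\rangle=\mathrm{trace}(\,\cdot\,\circ Z)$ together with the explicit formula for $\Delta$ on the basis $\phi_{ji}$, this becomes a finite-index bookkeeping identity: $\Delta(\phi_{ji})=\sum_s\phi_{js}\otimes\phi_{si}$ on the tensor-algebra level is precisely the comultiplication dual to composition $\Hom(W^{\otimes d},V^{\otimes d})\times\Hom(U^{\otimes d},W^{\otimes d})\to\Hom(U^{\otimes d},V^{\otimes d})$ under the trace pairing, so the identity holds before quotienting; passing to $A$ is then immediate because $\Delta_{VWU}$ is the induced map and $a,b$ factor through the quotients.

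The main obstacle I anticipate is purely notational: correctly keeping track of which $\Hom$-space each $\phi_{ji}$ lives in, and of the order reversal inherent in the ``$A(V,W)$ = functions on maps $W\to V$'' convention, so that the trace pairing and the comultiplication $\Delta$ line up with the \emph{composition} (rather than the opposite composition) on the $\Hom_{\cB_d}$ side. Once the dictionary is fixed, the proof is a one-line diagram chase: both vertical maps are, after applying the canonical isomorphisms, dual to the same tensor-algebra comultiplication $\Delta$, which descends to $\Delta_{VWU}$; hence the square commutes, and associativity of $m_{*,*,*}$ recorded in~(\ref{associativity}) is consistent with associativity of composition on the right. I would phrase the write-up as: (i) recall $\Delta_{VWU}$ is induced by the tensor-algebra $\Delta$; (ii) recall the iso of Proposition~\ref{Natural_Iso} is dual to the trace pairing; (iii) observe $\Delta$ is dual to composition under trace; (iv) conclude.
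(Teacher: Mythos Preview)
Your proposal is correct and follows essentially the same approach as the paper: the paper's proof also reduces to showing that $\Delta_{VWU}$ is induced by the tensor-algebra map $\Delta$ (by checking the quadratic relations are killed), and then dualizes and invokes Proposition~\ref{Natural_Iso}. Your write-up is actually more explicit than the paper's about step (iii)---that the tensor-algebra $\Delta$ is dual to composition under the trace pairing---which the paper leaves implicit. One small slip to watch: since $S(W,V;d)=(A(V,W)_d)^*$ and $A(V,W)_d$ is the $\cB_d$-coinvariants of $\Hom(V^{\otimes d},W^{\otimes d})$, the element $a$ pairs with $Y\in\Hom(V^{\otimes d},W^{\otimes d})$ (not $\Hom(W^{\otimes d},V^{\otimes d})$) and $X_a$ should land in $\Hom_{\cB_d}(W^{\otimes d},V^{\otimes d})$; but you already flagged this bookkeeping as the main obstacle, and it is purely notational.
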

\begin{proof}

Recall that $A(V,U)$ is a quotient of $T(V,U)$ by the  relations,
$$\sum_{k \ell} (R_{U,k\ell}^{p q} \phi_{k  i}\otimes \phi_{\ell j}-R^{k\ell}_{V, ij}\phi_{pk}\otimes \phi_{q\ell} ),$$
for any appropriate indices $i,j,p,q$ after a choice of bases of $V,W,U$.  It is a tedious but straight forward to show that the defining quadratic relations for $A(V,U)$ will be sent to zero by the composition map $\pi\otimes \pi\circ \Delta: T(V,U)\xrightarrow{\Delta} T(V,W)\otimes T(W,U)\xrightarrow{\pi\otimes \pi} A(V,W)\otimes A(W,U)$, where $\pi$ is the projection map. It means  that we have the following commutative diagram 
$$\xymatrix{
A(V,W)\otimes A(W,U) & T(V,W)  \otimes T(W,U)     \ar[l]_<<<{\pi\otimes \pi}\\
A(V,U)\ar[u]_{\Delta_{V,W,U}} &T(V,U)\ar[l]_{\pi} \ar[u]^{\Delta}
}.$$
   Moreover note that these maps preserve degrees.    After taking the dual on each degree and applying Proposition \ref{Natural_Iso}, the commutativity of (\ref{Schur_commutativity}) follows.
\end{proof}

\subsection{Quantum matrix space}
\label{sec-qmatrices}
We fix a commutative ring $\kk$ and an element $q \in \kk^\times$.  
Let $\cH_d$ be the Iwahori-Hecke algebra of type A: it is the  $\kk$-algebra
generated by $T_1,T_2,...,T_{d-1}$ subject to the relations:
\begin{equation}
\label{eq:hecke}
\begin{aligned}
T_iT_{j}&=T_jT_i \qquad \text{ if } |i-j|>1, \\
T_{i}T_{i+1}T_i&=T_{i+1}T_iT_{i+1} \\
(T_i-q)(T_i+q^{-1}) &=0.
\end{aligned}
\end{equation}
The algebra $\cH_d$ is a quotient of the group algebra of the braid group $B_d$, by the third relation above which we call the ``Hecke relation''.

Let $(V_n,R_n)$ be the \textbf{standard Yang-Baxter space}, where $V_n=\kk^n$ with  basis $e_1,e_2,\cdots,e_n$, and $R_n:V_n\otimes V_n\to V_n\otimes V_n$ is the $\kk$-linear operator defined
by:
\begin{equation}
\label{R_matrix}
R_n(e_i\otimes e_j)=
\begin{cases}
  e_j\otimes e_i  \qquad  \text{ if } i< j\\
  qe_i\otimes e_j \qquad \text{ if } i=j\\
 (q-q^{-1})e_i\otimes e_j + e_{j}\otimes e_i \qquad \text{ if } i>j
\end{cases},
\end{equation}
where $q\in \kk$.
The following is well-known and easy to check (see e.g. Lemma 4.8 in \cite{T}).
\begin{lemma}
For any $n$,   $R_n:V_n^{\otimes 2}\to V_n^{\otimes 2} $ is a Yang-Baxter operator. Moreover, $R_n$ satisfies the Hecke relation in (\ref{eq:hecke}), i.e. 
$$(R_n-q)(R_n+q^{-1})=0 .$$ 
\end{lemma}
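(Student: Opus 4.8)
The plan is to verify both assertions by direct computation on basis vectors, after one reduction that keeps the case analysis finite and short.

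For the Hecke relation, observe that $(R_n - q)(R_n + q^{-1}) = 0$ is equivalent to $R_n^2 = (q - q^{-1}) R_n + \id$. I would evaluate both sides on a basis vector $e_i \otimes e_j$ and split into the three cases of the definition \eqref{R_matrix}. If $i = j$, both sides equal $q^2\, e_i \otimes e_i$. If $i < j$, then applying $R_n$ twice sends $e_i \otimes e_j \mapsto e_j \otimes e_i \mapsto (q - q^{-1})\, e_j \otimes e_i + e_i \otimes e_j$, which is exactly $\bigl((q-q^{-1})R_n + \id\bigr)(e_i \otimes e_j)$; the case $i > j$ is handled the same way, using $R_n(e_j\otimes e_i) = e_i \otimes e_j$ for $j<i$. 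This is a three-line check.

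For the Yang--Baxter equation $R_{12} R_{23} R_{12} = R_{23} R_{12} R_{23}$ on $V_n^{\otimes 3}$, the key observation is that for any subset $S \subseteq \{1, \dots, n\}$ the coordinate subspace spanned by $\{e_a : a \in S\}$ is $R_n$-stable, and that under the unique order-preserving bijection $S \cong \{1, \dots, |S|\}$ the restriction of $R_n$ is identified with $R_{|S|}$ --- this is immediate from \eqref{R_matrix}, whose output depends only on whether the two indices satisfy $<$, $=$ or $>$. Applying $R_{12}R_{23}R_{12}$ or $R_{23}R_{12}R_{23}$ to a basis vector $e_i \otimes e_j \otimes e_k$ produces only vectors indexed by elements of $\{i,j,k\}$, so the whole identity takes place inside the third tensor power of $\mathrm{span}\{e_a : a \in \{i,j,k\}\}$. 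Hence it suffices to prove the Yang--Baxter equation for $R_1$, $R_2$ and $R_3$. The case $R_1 = q\,\id$ is trivial; for $R_2$ one checks the eight basis vectors of $V_2^{\otimes 3}$; and for $R_3$ the only vectors not already covered are the six permutations of $e_1 \otimes e_2 \otimes e_3$, on each of which one computes the two composites and compares.

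The main (and essentially only) obstacle is bookkeeping: even after the reduction to $n \le 3$, the Yang--Baxter check requires pushing several two-term expressions through three applications of $R$, and it is easy to drop a factor of $(q - q^{-1})$. If one prefers to avoid the computation, one can instead invoke the fact that $R_n$ is the standard $R$-matrix on $(\kk^n)^{\otimes 2}$ coming from the vector representation of $U_q(\mathfrak{gl}_n)$, for which the Yang--Baxter equation is classical; as the statement itself indicates, a clean reference is \cite[Lemma 4.8]{T}. In the write-up I would give the explicit three-line verification of the Hecke relation and either include the short $n \le 3$ case analysis for the Yang--Baxter equation or simply cite it.
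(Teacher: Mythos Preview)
Your proposal is correct. The paper itself gives no argument at all: it simply asserts the lemma is ``well-known and easy to check'' and points to \cite[Lemma~4.8]{T}. Your write-up goes further, with an explicit three-case verification of the Hecke relation and a clean reduction of the Yang--Baxter equation to $n\le 3$ via the observation that $R_n$ restricted to any coordinate subspace is order-isomorphic to $R_{|S|}$; both are sound, and the reduction is a nice touch that keeps the computation genuinely short. Since you also offer the option of citing \cite{T}, your proposal strictly subsumes the paper's approach.
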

Let $\rho_{d,n}:\cH_d\to \End(V_n^{\otimes d})$ denote the corresponding right $\cH_d$-module.
Recall the map  $\delta_{V_n,V_m}: V_n\to V_m\otimes A_q(m,n)$  given in (\ref{Universal_YB_morphism}), we can write 
$$\delta_{V_n,V_m}(e_i)=\sum_{j} e_j\otimes x_{ji} , $$
where $\{x_{ji}\}$ is the standard basis of $\Hom(V_m,V_n)$ mapping $e_k \mapsto \delta_{ik}e_j$ and $\delta_{ik}$ is the Kronecker symbol.  

\begin{lemma}
\label{Qpresent}
The algebra $A(V_m,V_n)$  is  generated by $x_{ji}$, $1\leq j\leq
m$, $1\leq i\leq n$, subject to the following relations:
\begin{align*}
 k>\ell \Rightarrow x_{ik}x_{i\ell} &=qx_{i\ell}x_{ik}\\
i>j \Rightarrow  x_{ik}x_{jk} &=qx_{jk}x_{ik}\\
k>\ell \text{ and } i>j \Rightarrow x_{i\ell}x_{jk} &=x_{jk}x_{i\ell} \\ 
k>\ell \text{ and } i>j \Rightarrow x_{ik}x_{j\ell}-x_{j\ell}x_{ik} &=(q-q^{-1})x_{i\ell}x_{jk}.
\end{align*}
\end{lemma}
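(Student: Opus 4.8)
The plan is to derive the presentation directly from the quadratic relations \eqref{Quadratic_relations} by plugging in the standard $R$-matrix \eqref{R_matrix} for both $R_V = R_n$ and $R_W = R_m$. By Lemma~\ref{quadratic_relations}, the algebra $A(V_m,V_n)$ is the quotient of the free $\kk$-algebra on the symbols $\{x_{ji} : 1\le j\le m,\ 1\le i\le n\}$ (recall that these form a basis of $\Hom(V_m,V_n)\simeq A(V_m,V_n)_1$, and $A(V_m,V_n)_1$ generates $A(V_m,V_n)$ since the defining ideal is generated in degree $2$) by the two-sided ideal generated by
$$\sum_{k,\ell} R^{pq}_{n,k\ell}\, x_{ki}x_{\ell j} \;-\; \sum_{k,\ell} R^{k\ell}_{m,ij}\, x_{pk}x_{q\ell},$$
as $(i,j)$ ranges over pairs of column indices in $\{1,\dots,n\}$ and $(p,q)$ over pairs of row indices in $\{1,\dots,m\}$. (A small notational caveat: here the ``row'' indices $p,q$ are in $\{1,\dots,m\}$ because $R_m$ acts on $V_m^{\otimes 2}$, and the ``column'' indices $i,j$ are in $\{1,\dots,n\}$; one should set up the index conventions in \eqref{R-matrix} carefully so that the two sides of \eqref{Quadratic_relations} match up.) So the entire content of the lemma is a bookkeeping exercise: expand both sides using \eqref{R_matrix} for each of the cases $p<q$, $p=q$, $p>q$ combined with $i<j$, $i=j$, $i>j$, and read off the resulting relations.

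First I would record the matrix entries of the standard $R$-matrix from \eqref{R_matrix}: writing $R_n(e_a\otimes e_b) = \sum_{k,\ell} R^{k\ell}_{n,ab} e_k\otimes e_\ell$, the nonzero entries are $R^{ba}_{n,ab}=1$ for $a<b$, $R^{aa}_{n,aa}=q$, and $R^{ab}_{n,ab}=q-q^{-1}$, $R^{ba}_{n,ab}=1$ for $a>b$. Then I would substitute into the relation above and work through the nine cases. For instance, when $p=q$ and $i=j$ the relation collapses to $q\,x_{pi}x_{pi} = q\,x_{pi}x_{pi}$, i.e. it is vacuous. When $p=q$, $i>j$: the left side is $R^{pp}_{n,ij}$-type terms — actually one gets $(q-q^{-1})x_{pi}x_{pj} + x_{pj}x_{pi}$ on one side and $q\,x_{pi}x_{pj}$ (from $R^{ij}_{m,ij}$ with... ) — matching these yields $x_{pj}x_{pi} = q^{-1}x_{pi}x_{pj}$, equivalently $x_{pi}x_{pj} = q\,x_{pj}x_{pi}$, which is the first displayed relation (with $k=i>\ell=j$ and row index $p$). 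The case $p>q$, $i=j$ similarly produces the second relation $x_{ik}x_{jk} = q\,x_{jk}x_{ik}$ for $i>j$. The two cases with $p>q$ and $i\ne j$ produce, after rearrangement, the last two relations (the commuting relation when the ``corners'' are parallel, and the $(q-q^{-1})$-deformed commutator when they are crossed). The remaining cases ($p<q$ with various $i$) give relations equivalent to ones already obtained by symmetry of the indexing, or are redundant.

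The main obstacle — such as it is — is purely organizational: keeping the row/column index conventions straight so that the $R_m$ on the right-hand side really contributes $R^{k\ell}_{m,ij}$ with $i,j$ the correct (column) indices, and verifying that the naively $\binom{m+1}{2}\binom{n+1}{2}$-or-so relations obtained collapse to exactly the four families stated, with no relation lost and none spurious. I would handle this by exploiting the evident symmetry: the relations are symmetric under simultaneously swapping $(p,q)\leftrightarrow$ their order and $(i,j)\leftrightarrow$ their order, so it suffices to treat $p\ge q$, and within that to split on $i<j$, $i=j$, $i>j$; each of the (at most) three surviving nontrivial cases is a two-line computation. Then I would remark that these are precisely the relations one finds in the literature for the coordinate algebra of quantum $m\times n$ matrix space (cf.~\cite{PW}, \cite{Ma}), specializing to the usual quantum matrix relations when $m=n$, which serves as a sanity check. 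I will omit the case-by-case expansion itself as routine.
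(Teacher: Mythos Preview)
Your approach is essentially identical to the paper's: the paper's proof simply invokes Lemma~\ref{universal_q_operator} (which identifies the defining relations of $A(V_m,V_n)$ with the quadratic relations \eqref{Quadratic_relations} for the universal Yang--Baxter morphism $\delta_{V_n,V_m}$) and Lemma~\ref{quadratic_relations}, leaving the case-by-case expansion implicit, which is exactly what you spell out. One small correction: in your displayed relation the subscripts $m$ and $n$ on the two $R$'s are swapped --- since $\delta_{V_n,V_m}:V_n\to V_m\otimes A(V_m,V_n)$ has $V=V_n$ and $W=V_m$ in the notation of Lemma~\ref{quadratic_relations}, the relation should read $\sum_{k,\ell} R^{pq}_{m,k\ell}\,x_{ki}x_{\ell j}=\sum_{k,\ell} R^{k\ell}_{n,ij}\,x_{pk}x_{q\ell}$, consistent with your own verbal description that $p,q\in\{1,\dots,m\}$ go with $R_m$.
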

\begin{proof}
By Lemma \ref{universal_q_operator},  $\delta_{V_n,V_m}$ is a Yang-Baxter morphism from $(V_n,R_n)$ to $(V_m,R_m)$ over $A_q(m,n)$. Then our lemma follows from  Lemma \ref{quadratic_relations}.

\end{proof}

By this lemma, the algebra  $A(V_m,V_n)$ is a deformation of the
ring of functions on the space of $m\times n$ matrices over $\kk$.  Indeed by the above lemma when $q=1$ we
have
$
A(V_m,V_n)\cong\sO(\Hom(\kk^n,\kk^m)),
$
the algebra of functions on $\Hom(\kk^n,\kk^m)$.

Since from now on we will only be working with the standard Yang-Baxter spaces we will drop the $V$ from the notation and write:
\begin{align*}
A_q(m,n) &= A(V_m,V_n) \\
A_q(m,n)_d &= A(V_m,V_n)_d \\
S_q(m,n;d)&= S(V_m,V_n;d)
\end{align*}
We refer to $A_q(m,n)$ as the algebra of  \textbf{quantum $m\times n$ matrices}. Note that when $m=n$ $A_q(n,n)$ is a bialgebra with counit $\epsilon:A_q(n,n)\to
\kk$ given by $\epsilon(x_{ij})=\delta_{ij}$.  In fact, $A_q(n,n)$ is the well-known
algebra of quantum $n\times n$ matrices (cf. \cite[\S 4]{T}). 

We now record a monomial basis of $A_q(m,n)$.  This is easiest to formulate using the following ordering.  Consider the set $\{x_{ji}:i,j=1,2,...\}$ of infinitely many variables with a total order so that 
$$x_{11}<x_{21}<x_{22}<x_{31}<x_{22}<x_{13}<x_{41}<\cdots$$
This induces a total order on $\{x_{ji}:1\leq j\leq m, 1\leq i \leq n \}$.  Now given a monomial $m=\prod_{ij}x_{ij}^{a_{ij}}\in A_q(m,n)$ let $\vec{m}$ be the reordered monomial so that the variables appear from smallest to biggest.  For instance, if $m=x_{21}^2x_{11}x_{31}^2$ then $\vec{m}=x_{11}x_{21}^2x_{31}^2$.

\begin{lemma}
\label{Basis_quantum_matrix}
The set of ordered monomials $\{\vec{m}:m=\prod_{ij}x_{ij}^{a_{ij}}, a_{ij}\geq0\}$ is a basis of $A_q(m,n)$ .
\end{lemma}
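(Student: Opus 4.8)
The plan is to prove this by a standard diamond-lemma / Bergman-style argument, exploiting the fact that the four families of relations in Lemma \ref{Qpresent} are already written as rewriting rules that reduce a ``larger'' monomial to a combination of ``smaller'' ones. First I would make precise the monoid-theoretic setup: on the free algebra $T(V_m,V_n) = \kk\langle x_{ji}\rangle$, equip the generators with the total order displayed before the statement, and extend it to a term order on monomials (say degree-lexicographic, so that each relation in Lemma \ref{Qpresent} has a well-defined leading monomial on the left which is strictly larger than every monomial occurring on the right). The four relation types all have leading terms of the form $x_{ik}x_{jk}$ ($i>j$), $x_{ik}x_{i\ell}$ ($k>\ell$), $x_{ik}x_{j\ell}$ with $i>j$, $k>\ell$: in each case the leading monomial is ``out of order'' and the right-hand side is a linear combination of properly-ordered or lexicographically-smaller quadratic monomials. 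Hence the set of ordered monomials $\{\vec m\}$ is exactly the set of normal (irreducible) monomials for this rewriting system, and they certainly span $A_q(m,n)$ since any monomial can be rewritten into a combination of ordered ones by finitely many reductions (the term order guarantees termination).

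The real content is linear independence, and for that I would invoke the Diamond Lemma: it suffices to check that all \emph{overlap ambiguities} among the leading terms resolve (are confluent). The overlaps to check are the cubic monomials $x_{ik}x_{j\ell}x_{pm}$ in which both the left pair and the right pair form a reducible (out-of-order) pattern; one reduces such a word in the two possible ways and verifies the resulting linear combinations of ordered monomials agree. This is precisely the ``tedious but straightforward'' computation, and it is the main obstacle — it amounts to re-deriving the braid (Yang--Baxter) relation $R_{12}R_{23}R_{12} = R_{23}R_{12}R_{23}$ at the level of coordinates, organized by the relative order of the indices $i,j,p$ and $k,\ell,m$. Rather than grinding through all cases by hand, I would point out that confluence here is \emph{equivalent} to the Yang--Baxter equation for $R_m$ and $R_n$, which we already know holds (the standard $R$-matrices are Yang--Baxter operators, and $\delta_{V_n,V_m}$ is a genuine Yang--Baxter morphism by Lemma \ref{universal_q_operator}); so confluence is automatic and the Diamond Lemma applies.

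There is, however, a slicker route that avoids the combinatorics almost entirely, and I would present that as the preferred proof. We already know the span statement, so it remains only to show the ordered monomials of each fixed degree $d$ are linearly independent in $A_q(m,n)_d$. Dualize: by $(\ref{q_Hom_coin})$ and Proposition \ref{Natural_Iso}, $\dim_\kk A_q(m,n)_d$ (when $\kk$ is a field) equals $\dim_\kk \Hom_{\cH_d}(V_n^{\otimes d}, V_m^{\otimes d})$, and by Jimbo--Schur--Weyl duality this dimension is $\sum_\lambda m_\lambda^{(n)} m_\lambda^{(m)}$ summed over partitions $\lambda$ of $d$ with at most $\min(m,n)$ rows, where $m_\lambda^{(r)}$ is the multiplicity of the irreducible $\cH_d$-module $S^\lambda$ in $V_r^{\otimes d}$ — equivalently the number of semistandard $\lambda$-tableaux with entries in $\{1,\dots,r\}$. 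A direct count shows this equals the number of ordered monomials $\vec m$ of degree $d$ in the $x_{ji}$ (each such monomial is recorded by its exponent matrix, and these are in bijection with pairs of tableaux via RSK, matching the classical $q=1$ picture). Hence the spanning set has the right cardinality and must be a basis over a field; the general commutative-ring case then follows by base change, since $A_q(m,n)$ is defined over $\bZ[q,q^{-1}]$ and the ordered monomials span a free submodule of full rank. The only subtlety to address carefully is that the order displayed in the excerpt has a (presumably typographical) repetition ``$x_{22}$'', which I would silently correct to the evident enumeration of $\{x_{ji}\}$ by anti-diagonals; none of the argument depends on the precise choice of total order, only on its compatibility with the leading-term structure of the relations.
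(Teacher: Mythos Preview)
Your first approach is exactly what the paper does: set up the Diamond Lemma with the given term order, observe that the irreducible words are precisely the ordered monomials, and declare the overlap checks a ``straightforward case-by-case analysis.'' Your remark that the cubic overlaps resolve \emph{because} $R_m$ and $R_n$ satisfy the Yang--Baxter equation is a pleasant conceptual gloss that the paper does not make explicit; it is correct in spirit (the two reduction paths on $T_1T_2T_3$ amount to $R_{12}R_{23}R_{12}$ versus $R_{23}R_{12}R_{23}$ on each side of the RTT relation), though turning that sentence into a proof still requires writing out the coordinate identities, so it does not actually save work over the paper's version.

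Your ``slicker'' second route, however, has a genuine gap in this context. Within the paper's logical structure it is circular: Jimbo--Schur--Weyl duality is derived in Section~\ref{sec-invariants} as a consequence of Theorem~\ref{Rep_thm}, and Theorem~\ref{Rep_thm} in turn needs Lemma~\ref{Basis_quantum_matrix} (it is invoked to guarantee that $\Gamma_q^{d,m}(n)$ is free, hence a legitimate object of $\V$). Even if you import Schur--Weyl duality from outside the paper, the decomposition of $V_n^{\otimes d}$ into irreducible $\cH_d$-modules with multiplicities counted by semistandard tableaux requires $\cH_d$ to be semisimple, i.e.\ $\kk$ a field and $q$ not a root of unity; the lemma, by contrast, is asserted over an arbitrary commutative ring with arbitrary $q\in\kk^\times$. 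Your proposed repair---prove it first over $\bZ[q,q^{-1}]$ by passing to the fraction field $\bQ(q)$, then base-change---can be made to work, but note that the generic-$q$ dimension count then rests on external input (Dipper--James permutation-module theory or the like) substantially heavier than the elementary diamond check it was meant to replace. So keep the Diamond Lemma argument as the proof and, if you like, mention the Yang--Baxter interpretation of confluence as a remark.
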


\begin{proof}
We apply the Bergman Diamond Lemma \cite{B}.  The set-up is as follows.  Let $X=\{x_{ji}:1\leq j\leq m, 1\leq i \leq n \}$ and let $\left< X\right>$ be the free monoid generated by $X$.   Endow $X$ with the reverse total order as the one above; $\left< X\right>$ is then endowed with the induced lexicographic total order 

  Let $\mathcal{S}$ be the set of relations from Lemma \ref{Qpresent}.  Every relation in $\mathcal{S}$ is of the form $m-f$ where $m\in \left< X\right>$, $f\in \kk\left< X\right>$ and $m$ is strictly bigger than every monomial in $f$ ($m$ is simply the leftmost monomial in each one of the relations above).  In other words, this order is ``compatible with reductions'' in the sense of \cite{B}. (Recall for a relation $m-f$ the corresponding reduction is an endomorphism of $\kk\left< X\right>$ that maps $AmB \mapsto AfB$ and every other element of $\left< X\right>$ to itself.)
  
  Note that the irreducible monomials, i.e. those unchanged by all reductions, are precisely the ordered monomials in the statement of the lemma.  Therefore by the Diamond Lemma, to conclude that these form a basis of $A_q(m,n)$ we need to show that one can resolve all minimal ambiguities.  This means that any sequence of reductions that one can apply to a degree three monomial $x_{ik}x_{j\ell}x_{rs}$ results in the same irreducible monomial.  This is a straightforward case-by-case analysis.  
\end{proof}

As a consequence of Lemma \ref{Basis_quantum_matrix},  as $\kk$-modules $A_q(m,n)$ and $A_q(m,n)_d$ are free over $\kk$.  

Consider $\Delta_{\ell,m,n}=\Delta_{V_\ell, V_m,V_n}:A_q(\ell,n)\to A_q(\ell,m)\otimes A_q(m,n)$ defined as in (\ref{co-mult}).  On generators $\Delta_{\ell,m,n}$ is given by
$$x_{ij}\mapsto \sum_{k=1}^{m} x_{ik}\otimes x_{k j} .$$
Usually $\ell,m,n$ are clear from context and we omit
them from the notation.

Recall from (\ref{Schur}) that 
$$
S_q(m,n; d)=(A_q(n,m)_d)^*.$$   Note that $S_q(n,n;d)$ is an algebra with the multiplication from (\ref{Schur_bilinear}), and it is the well-known  q-Schur algebra   (cf. \cite[\S 11]{T}), which is usually denoted $S_q(n,d)$.  Thus we can regard $S_q(m,n; d)$ as a kind of ``rectangular $q$-Schur algebra'' generalizing the $m=n$ case.  These will serve as the morphism spaces in the quantum divided power category which we define below.      

%

Let $\epsilon_n: A_q(n,n)_d\to \kk$ be the restriction of $\epsilon: A_q(n,n)\to \kk$.  The following lemma is well-known.
\begin{lemma}
\label{counit_lemma}
$\epsilon_n$ is the unit of the $q$-Schur algebra $S_q(n,n;d)$.
\end{lemma}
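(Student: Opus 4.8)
The plan is to verify that $\epsilon_n$ acts as a two-sided identity for the multiplication $m_{V_n V_n V_n}$ on $S_q(n,n;d)$. Recalling the definition of this multiplication via duality: for $a,b \in S_q(n,n;d)$, the product $b\circ a$ is the composition of $\Delta = \Delta_{V_n,V_n,V_n}: A_q(n,n)_d \to A_q(n,n)_d \otimes A_q(n,n)_d$ with $a\otimes b$. So what I need to show is that $(a \otimes \epsilon_n)\circ \Delta = a$ and $(\epsilon_n \otimes a)\circ\Delta = a$ as functionals on $A_q(n,n)_d$.

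First I would reduce to a computation on monomials. Since $A_q(n,n)_d$ is spanned by the images of degree-$d$ monomials in the generators $x_{ij}$ (indeed has a monomial basis by Lemma \ref{Basis_quantum_matrix}), it suffices to evaluate both sides on such an element. On the generators we have $\Delta(x_{ij}) = \sum_{k} x_{ik}\otimes x_{kj}$, so on a degree-$d$ monomial $x_{i_1 j_1}\cdots x_{i_d j_d}$ the comultiplication gives $\sum_{k_1,\dots,k_d} x_{i_1 k_1}\cdots x_{i_d k_d} \otimes x_{k_1 j_1}\cdots x_{k_d j_d}$ (using that $\Delta$ is an algebra homomorphism). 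Now apply $1 \otimes \epsilon_n$: since $\epsilon_n(x_{ab}) = \delta_{ab}$ and $\epsilon_n$ is multiplicative, $\epsilon_n(x_{k_1 j_1}\cdots x_{k_d j_d})$ is nonzero only when $k_r = j_r$ for all $r$ — but one must be slightly careful, since the monomials $x_{k_1 j_1}\cdots x_{k_d j_d}$ need not be basis monomials and $\epsilon_n$ is only defined as the restriction of the bialgebra counit $\epsilon: A_q(n,n)\to\kk$. The cleanest way around this is to use that $\epsilon$ is a genuine algebra homomorphism on all of $A_q(n,n)$ (this is exactly the statement that $A_q(n,n)$ is a bialgebra with counit $\epsilon$, quoted in the excerpt), so $\epsilon(x_{k_1 j_1}\cdots x_{k_d j_d}) = \prod_r \delta_{k_r j_r}$ with no ordering issues. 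Hence $(1\otimes \epsilon_n)\Delta(x_{i_1 j_1}\cdots x_{i_d j_d}) = x_{i_1 j_1}\cdots x_{i_d j_d}$, and dually $(a\otimes \epsilon_n)\circ\Delta = a$. The argument for the left unit, using $(\epsilon_n \otimes 1)\Delta$, is identical.

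Alternatively — and perhaps more in the spirit of the paper — one can phrase this structurally: $\epsilon_n \in S_q(n,n;d)$ corresponds under the pairing to the counit, and the counit axiom $(1\otimes\epsilon)\circ\Delta = \mathrm{id} = (\epsilon\otimes 1)\circ\Delta$ for the bialgebra $A_q(n,n)$ restricts to degree $d$ because $\Delta_{\ell,m,n}$ preserves degree (as noted after \eqref{co-mult}); dualizing this identity of maps $A_q(n,n)_d \to A_q(n,n)_d$ immediately gives that $\epsilon_n$ is a two-sided unit for $m_{V_nV_nV_n}$. I would present this second version as the main line of argument, with the monomial computation available as a sanity check.

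\textbf{Main obstacle.} The only real subtlety is making sure the compatibility of the degree-$d$ comultiplication $\Delta_{\ell,m,n}$ with the graded pieces of the bialgebra comultiplication on $A_q(n,n)$ is correctly invoked — i.e. that $\Delta_{V_n,V_n,V_n}$ restricted to $A_q(n,n)_d$ really is the degree-$d$ component of the bialgebra $\Delta$, so that the bialgebra counit axiom can be applied degreewise. Since both are characterized by the same formula $x_{ij}\mapsto\sum_k x_{ik}\otimes x_{kj}$ on generators (given explicitly after \eqref{co-mult}) and both are algebra homomorphisms, they agree; once this identification is in hand the lemma is immediate. Everything else is bookkeeping with the trace pairing from Proposition \ref{Natural_Iso} and the definition \eqref{Schur_bilinear} of the rectangular Schur multiplication.
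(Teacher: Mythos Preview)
Your proposal is correct, and your ``second version'' is precisely the paper's argument: the paper observes that being a unit for $S_q(n,n;d)$ is dual to being a counit for $A_q(n,n)_d$, and the counit diagrams in degree $d$ are just the degree-$d$ restriction of the bialgebra counit axiom for $A_q(n,n)$ (citing \cite{PW}). Your monomial computation is a fine explicit verification of the same identity but is not needed once the bialgebra counit axiom is invoked.
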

\begin{proof}
It is enough to check that $\epsilon_n$ is a counit of $A_q(n,n)_d$, i.e.  
to check that the following diagrams commute: 
$$
\xymatrix{
A_q(n,n)_d\ar[r]^<<<<<{\Delta}\ar[dr]^{\simeq} & A_q(n,n)_d\otimes A_q(n,n)_d\ar[d]^{1\otimes \epsilon_n}\\
& \kk\otimes A_q(n,n)_d
},\quad  \xymatrix{A_q(n,n)_d\ar[r]^<<<<<{\Delta}\ar[dr]^{\simeq } & A_q(n,n)_d\otimes A_q(n,n)_d\ar[d]^{\epsilon_n\otimes 1}\\
& A_q(n,n)_d\otimes \kk}
$$
But these diagrams are just the degree $d$ part of the left co-unit and right co-unit diagrams for $A_q(n,n)$, where $\epsilon$ is the co-unit of $A_q(n,n)$, and hence are known to be commutative \cite[Section 3.6]{PW}.   
\end{proof}

\section{Main definitions}
\label{sec-def}

\subsection{Classical polynomial functors}
\label{sec:classical}
Let $\Vect$ be the category of finite projective $\kk$-modules. 
To motivate our definition of quantum polynomial functors we first recall the classical category of strict polynomial functors.
For any $V\in\Vect$
the symmetric group $\fS_d$ acts on the tensor product $V^{\otimes d}$ by permuting factors.

For $V\in\V$ the
$d$-th divided power of $V$ is defined as the invariants $\Gamma^d(V)=(\otimes^dV)^{\fS_d}$.
 Let $\Gamma^d\Vect$ denote the category consisting of objects $V\in\Vect$
and morphisms $$\Hom_{\Gamma^d\Vect}(V,W)=\Gamma^d(\Hom(V,W)).$$  The diagonal
inclusion $\fS_d \subset \fS_d\times\fS_d$ induces a morphism $$\Gamma^d(U)\otimes\Gamma^d(V)\to\Gamma^d(U\otimes
V).$$ Composition 
in $\Gamma^d\Vect$ is then defined as
$$
\xymatrix{
\Gamma^d(\Hom(V,U))\otimes \Gamma^d(\Hom(W,V)) \ar[r] & \Gamma^d(\Hom(V,U)\otimes\Hom(W,V))
\ar[d] \\
&  \Gamma^d(\Hom(W,U)).
}
$$
Let $\PP^d$ be the category consisting of $\kk$-linear functors $\Gamma^d\Vect
\to \Vect$.  Morphisms $\PP^d$ are natural transformations of functors. 
$\PP^d$ is the category of \textbf{polynomial functors of homogenous degree $d$} 

We remark that this is not the  definition of $\PP^d$ which originally appears 
in Friedlander and Suslin's work \cite{FS} on the finite generation of the cohomology of finite group schemes.  In their presentation polynomial
functors have both source and target the category $\Vect$, and it is required
that maps between $\Hom$-spaces are polynomial.  In the presentation we use, the polynomial condition is encoded in the category $\Gamma^d\Vect$.  For details see \cite{Kr, Ku} and references therein.

\subsection{Definition of quantum polynomial functors}

Note that in the above setup, $\Gamma^d(\Hom(V,W)) \cong \Hom_{\fS_d}(V^{\otimes d},W^{\otimes d}).$  This observation motivates our definition of quantum polynomial functors.

For any $d\geq 0$,  we define \textbf{quantum divided power category} $\Gamma_q^d\V$: 
 it consists of objects $0,1,2,...$ and
the morphisms are defined as 
\begin{equation}
\label{eq:mors}
\Hom_{\Gamma^d_q\Vect} (m,n):=\Hom_{\cB_d}(V_m^{\otimes d}, V_n^{\otimes d}).
\end{equation}
We should think of $\Gamma_q^d\V$ as the category of standard Yang-Baxter spaces $(V_n, R_n)$, and morphisms are given by $d$-th degree part of quantum Hom-space algebras. 

A \textbf{quantum polynomial functor of degree $d$} is defined to be a $\kk$-linear functor
$$F: \Gamma^d_q\V\to \V .$$
We denote by $\PP^d_q$ the category of quantum polynomial functors of degree $d$. Morphisms are natural transformations of functors.  

The category  $\PP_q^d$ is  an exact category in the sense of Quillen.  Foror the basics on exact categories see \cite{Bu}. Let $\PP_q$ be  the category of  quantum polynomial functors of all possible degrees, 
$$\PP_q:=\bigoplus_{d} \PP_q^d. $$
Given $F\in\PP_q$ we denote the map on hom-spaces by $F_{m,n}:\Hom_{\cB_d}(V_m^{\otimes d}, V_n^{\otimes d}) \to \Hom(F(m),F(n))$.

\begin{remark}
When $q=1$ our construction recovers the classical category $\PP^d$.  Indeed the natural functor $\Gamma_1^d\V \to \Gamma^d\V$ defined by $n\mapsto \kk^n$ is an equivalence of categories, and  induces an equivalence $\PP_1^d\cong\PP^d$.
\end{remark}

\begin{remark}
\label{rem:Hecke}
In the definition of the morphisms (\ref{eq:mors}) in $\Gamma^d_q\V$ we can replace $\cB_d$ by $\cH_d$ since the action of $\cB_d$ on tensor powers of the standard Yang-Baxter space factors through $\cH_d$.  
\end{remark}

$\PP_q$ has a monoidal structure.
For any $F\in \PP_q^d$ and $G\in \PP_q^e$  define the tensor product $F\otimes G\in \PP_q^{d+e}$ as follows:
 for any $n$, $(F\otimes G)(n):=F(n)\otimes G(n)$ and for any $m,n$, the map on morphisms is given by the composition
\begin{equation*}
\xymatrix{
\Hom_{\cB_{d+e}}(V_m^{\otimes d+e}, V_n^{\otimes d+e})\ar[r] & \Hom_{\cB_d\otimes \cB_e}(V_m^{\otimes d}\otimes V_m^{\otimes e}, V_n^{\otimes d}\otimes V_n^{\otimes e})  \ar[d] \\
& \Hom_{\cB_d}(V_m^{\otimes d},V_n^{\otimes d})\otimes \Hom_{\cB _e}(V_m^{\otimes e},V_n^{\otimes e}) \ar[d]^{F_{m,n}\otimes G_{m,n}} \\
& \Hom(F(m),F(n))\otimes\Hom(G(m),G(n)) \ar[d]\\
&  \Hom(F(m)\otimes G(m),F(n)\otimes G(n))
},
\end{equation*}
where the second morphism is in fact an isomorphism, which follows from the following general lemma.
\begin{lemma}
Let $A$ and $B$ be  $\kk$-algebras. Given  $A$-modules  $V_1,V_2$ and  $B$-modules $W_1,W_2$  such that $V_1,V_2,W_1,W_2$ are free over $\kk$ of finite rank, then the natural inclusion 
$$\alpha: \Hom_A(V_1,V_2)\otimes \Hom_B(W_1,W_2)\to  \Hom_{A\otimes B}(V_1\otimes W_1,V_2\otimes W_2))$$ is an isomorphism.
\end{lemma}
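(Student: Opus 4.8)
The statement is a purely linear-algebraic fact about tensor products of $\Hom$-spaces, so the plan is to forget the Yang--Baxter setting entirely and argue with free $\kk$-modules of finite rank. First I would note that the map $\alpha$ is the restriction of the canonical isomorphism $\Hom_\kk(V_1,V_2)\otimes\Hom_\kk(W_1,W_2)\longisomap\Hom_\kk(V_1\otimes W_1, V_2\otimes W_2)$, which holds because all four modules are free of finite rank; in particular $\alpha$ is automatically injective, so the only content is surjectivity onto the $A\otimes B$-equivariant part. Thus it suffices to show that an element $f\in\Hom_\kk(V_1\otimes W_1,V_2\otimes W_2)$ which is $A\otimes B$-linear actually lies in the image of $\Hom_A(V_1,V_2)\otimes\Hom_B(W_1,W_2)$.

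The key step is to characterize the image of $\alpha$ inside $\Hom_\kk(V_1\otimes W_1, V_2\otimes W_2)$ as an intersection of two equivariance conditions. Concretely, under the canonical identification $\Hom_\kk(V_1\otimes W_1,V_2\otimes W_2)\cong \Hom_\kk(V_1,V_2)\otimes\Hom_\kk(W_1,W_2)$, the subspace $\Hom_A(V_1,V_2)\otimes\Hom_\kk(W_1,W_2)$ is exactly the set of $g$ with $g\circ(a\otimes 1)=(a\otimes 1)\circ g$ for all $a\in A$ (here $A$ acts on the first tensor factor only), and symmetrically for $B$. This follows from the elementary fact that for a free finite-rank $\kk$-module $M$ and any $\kk$-module $N$ one has $\Hom_A(V_1,V_2)\otimes N = (\Hom_\kk(V_1,V_2)\otimes N)^{A}$, where $A$ acts only on the $\Hom_\kk(V_1,V_2)$ factor; this is proved by picking a $\kk$-basis of $N$ (or more cleanly, by noting $\Hom_\kk(V_1,V_2)\otimes N\cong\Hom_\kk(V_1,V_2\otimes N)$ and that taking $A$-invariants commutes with $-\otimes N$ since $N$ is flat, and $\Hom_A(V_1,V_2\otimes N)=\Hom_A(V_1,V_2)\otimes N$ again by finite freeness of $V_1$). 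Now an $A\otimes B$-linear $f$ satisfies both the "$A$ acts on first factor" equivariance and the "$B$ acts on second factor" equivariance, so it lies in $\big(\Hom_A(V_1,V_2)\otimes\Hom_\kk(W_1,W_2)\big)\cap\big(\Hom_\kk(V_1,V_2)\otimes\Hom_B(W_1,W_2)\big)$, and one checks this intersection equals $\Hom_A(V_1,V_2)\otimes\Hom_B(W_1,W_2)$ — again using flatness, since intersecting two subspaces of the form $U_1\otimes M$ and $M'\otimes U_2$ inside $M'\otimes M$ gives $U_1\otimes U_2$ when the quotients are flat.

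The main obstacle is bookkeeping the flatness/finite-freeness hypotheses carefully: none of the isomorphisms $\Hom_\kk(V,V')\otimes N\cong\Hom_\kk(V,V'\otimes N)$ or "invariants commute with $\otimes N$" are valid for arbitrary modules, so I would be explicit that $V_1,V_2,W_1,W_2$ free of finite rank is used (to get the basic Hom-tensor iso and to identify $\Hom_A$ inside $\Hom_\kk$), while $N$ being merely flat suffices for the invariants-commute-with-tensor step — and over a general commutative ring $\kk$ these submodules $\Hom_A(V_1,V_2)$, $\Hom_B(W_1,W_2)$ need not be projective, so one cannot shortcut via projectivity. An alternative, perhaps cleaner, route avoids intersections: write $\Hom_A(V_1,V_2)$ as the kernel of the $\kk$-linear map $\Hom_\kk(V_1,V_2)\to\prod_{a}\Hom_\kk(V_1,V_2)$, $g\mapsto (ga-ag)_a$, then tensor this left-exact sequence with the flat module $\Hom_\kk(W_1,W_2)$ to compute $\Hom_A(V_1,V_2)\otimes\Hom_\kk(W_1,W_2)$ as a kernel, do the same on the other side, and observe that $\Hom_{A\otimes B}(V_1\otimes W_1,V_2\otimes W_2)$ is the common kernel of both families of commutator maps. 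Either way the proof is short once the functoriality of tensoring against a flat module is invoked.
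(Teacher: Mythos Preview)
Your argument has a genuine gap at the very first step: the claim that $\alpha$ is ``automatically injective'' because it factors through the canonical isomorphism $\Hom_\kk(V_1,V_2)\otimes\Hom_\kk(W_1,W_2)\cong\Hom_\kk(V_1\otimes W_1,V_2\otimes W_2)$ is not correct. The map $\alpha$ is the composite of $\Hom_A(V_1,V_2)\otimes\Hom_B(W_1,W_2)\to\Hom_\kk(V_1,V_2)\otimes\Hom_\kk(W_1,W_2)$ with that isomorphism, and the first arrow---a tensor product of two inclusions of submodules---need not be injective over a general commutative ring. You correctly flag later that $\Hom_A(V_1,V_2)$ and $\Hom_B(W_1,W_2)$ need not be projective, but this non-flatness is exactly what can break injectivity too, and it is the same unverified ``quotients are flat'' hypothesis that your intersection argument for surjectivity silently relies on. Concretely, take $\kk=\bZ/4$, $A=B=\kk[x]/(x^2)$, $V_1=W_1=\kk$ with $x$ acting by $2$, and $V_2=W_2=\kk$ with $x$ acting by $0$. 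Then $\Hom_A(V_1,V_2)=\Hom_B(W_1,W_2)=2\kk\cong\bZ/2$, so their tensor product over $\kk$ is $\bZ/2$ with generator $2\otimes 2$, while $\Hom_{A\otimes B}(V_1\otimes W_1,V_2\otimes W_2)=2\kk\cong\bZ/2$ as well; but $\alpha(2\otimes 2)$ is multiplication by $4=0$, so $\alpha$ is the zero map and is neither injective nor surjective.

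In other words the lemma is false over a general commutative ring, so no argument can succeed without an extra hypothesis such as flatness of $\Hom_A(V_1,V_2)$ or $\Hom_B(W_1,W_2)$, or $\kk$ a field. The paper's own proof has the same blind spot: it declares injectivity ``clear'' for the same reason you do, and then chooses a $\kk$-basis of $\Hom_B(W_1,W_2)$, implicitly assuming that module is free. In the paper's intended application the relevant modules $\Hom_{\cB_d}(V_m^{\otimes d},V_n^{\otimes d})$ \emph{are} free over $\kk$ (by Proposition~\ref{Natural_Iso} together with Lemma~\ref{Basis_quantum_matrix}), and under that extra freeness both your approach and the paper's basis-picking approach go through and are essentially the same argument; but neither rescues the lemma in the generality in which it is stated.
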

\begin{proof}
First of all we can identify $\Hom(V_1,V_2)\otimes \Hom(W_1,W_2)\simeq \Hom(V_1\otimes W_1, V_2\otimes W_2)$.  Hence the injectivity of $\alpha$ is clear.  Given any $f\in  \Hom_{A\otimes B}(V_1\otimes W_1,V_2\otimes W_2))$, we can write $f$ as $\sum_{i}  e_i\otimes \psi_i$, where $\{e_i\}$ is a basis of $\Hom(V_1,V_2)$,  and for each $i$, $\psi_i\in \Hom (W_1,W_2)$.  By assumption $f$ intertwines with the action of $1\otimes B$.  Since $\{e_i\}$ is a basis,  it follows that for every $i$,  $\psi_i$ intertwines with the action of $B$, i.e.  $f\in \Hom(V_1,V_2)\otimes \Hom_B(W_1,W_2)$.  Now we can write $f=\sum_{j}   \phi_j\otimes  a_j $, where $\{a_j\}$ is a basis of $\Hom_B(W_1,W_2)$. Note that $f$ also intertwines with $A\otimes 1$. It folllows that for any $j$, $\phi_j$ intertwines with the action of  $A$.    It shows the surjectivity of the inclusion $\alpha$.
\end{proof}

A duality is defined on $\PP_q$ as follows.  We first identify $V_m \cong V_m^*$ via the standard basis $e_i$, i.e. if $e_1^*,...,e_m^*$ denotes the dual basis of $V_m^*$ then $V_m\to V_m^*$ is given by $e_i \mapsto e_i^*$.  This induces an identification
$$
\sigma: \Hom_{\cB_d}(V_m^{\tn d},V_n^{\tn d}) \to \Hom_{\cB_d}(V_n^{\tn d},V_m^{\tn d}).
 $$ 
 For $F\in\PP_q^d$ we define $F^\sharp\in\PP_q^d$ by:
\begin{enumerate}
\item[(i)] $F^\sharp(n):=F(n)^*$,
\item[(ii)] $F^\sharp_{m,n}:\Hom_{\cB_d}(V_m^{\tn d},V_n^{\tn d}) \to \Hom(F^\sharp(m),F^\sharp(n))$ is given by the composition
$$
\xymatrix{
\Hom_{\cB_d}(V_m^{\tn d},V_n^{\tn d}) \ar[r]^{\sigma} & \Hom_{\cB_d}(V_n^{\tn d},V_m^{\tn d}) \ar[r]^{F_{n,m}} & \Hom(F(n),F(m)) \ar[d]^\cong \\
&& \Hom(F(m)^*,F(n)^*)
}
$$
\end{enumerate}
Given a morphism $f:F\to G$ in $\PP_q$, we define $f^\sharp:G^\sharp \to F^\sharp$ by $f^\sharp(n)=f(n)^*$.  It is straightforward to check that $f^\sharp$ is a morphism of polynomial functors. Note that the funcotr $*$ is a contravariant duality functor on $\Vect$.  Therefore $\sharp$ defines a contravariant duality $\sharp:\PP_q\to\PP_q$.

\begin{lemma}
\label{Duality_Tensor}
Given any two quantum polynomial functors $F$ and $G$ of homogeneous degree, then we have a canonical isomorphism 
$$(F\otimes G)^\sharp \simeq  F^\sharp \otimes G^\sharp. $$
\end{lemma}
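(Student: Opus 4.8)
The plan is to unwind both sides on objects and on morphisms and check they agree as functors. On objects this is immediate: for any $n$ we have
$$(F\otimes G)^\sharp(n) = \big((F\otimes G)(n)\big)^* = \big(F(n)\otimes G(n)\big)^* \cong F(n)^* \otimes G(n)^* = F^\sharp(n)\otimes G^\sharp(n),$$
using the canonical isomorphism $(U\otimes V)^*\cong U^*\otimes V^*$ for finite projective $\kk$-modules, which is natural in $U$ and $V$. So the content of the lemma is that these identifications assemble into a natural transformation, i.e. are compatible with the action on morphism spaces.

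First I would fix the notation for the three relevant maps on hom-spaces. For a morphism $m\to n$ in $\Gamma_q^d\V$, i.e. an element of $\Hom_{\cB_d}(V_m^{\tn d},V_n^{\tn d})$, the functor $(F\otimes G)^\sharp$ acts by: apply $\sigma:\Hom_{\cB_{d+e}}(V_m^{\tn(d+e)},V_n^{\tn(d+e)})\to\Hom_{\cB_{d+e}}(V_n^{\tn(d+e)},V_m^{\tn(d+e)})$, then apply $(F\otimes G)_{n,m}$, then transpose. The functor $F^\sharp\otimes G^\sharp$ acts by: restrict along the braid-group factorization $\cB_d\otimes\cB_e\hookrightarrow\cB_{d+e}$ to land in $\Hom_{\cB_d}(V_m^{\tn d},V_n^{\tn d})\otimes\Hom_{\cB_e}(V_m^{\tn e},V_n^{\tn e})$, apply $\sigma\otimes\sigma$, then $F_{n,m}\otimes G_{n,m}$, then transpose each factor and combine. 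The key observation is that the duality map $\sigma$ (induced by the self-duality $e_i\mapsto e_i^*$ of $V_m$) commutes with the restriction-and-decomposition map for the tensor product, because the braid group action on tensor powers is built from $R_n$ one factor-pair at a time and $\sigma$ intertwines these actions compatibly with the embedding $V_m^{\tn(d+e)} = V_m^{\tn d}\otimes V_m^{\tn e}$. Concretely one checks that the square relating $\sigma$ on $\Hom_{\cB_{d+e}}$ and $\sigma\otimes\sigma$ on the tensor of the two smaller hom-spaces commutes; this is a diagram chase using the explicit formula \eqref{R_matrix} for $R_n$, or more cleanly the observation that $R_n$ is symmetric under the identification $V_n\cong V_n^*$ in the appropriate sense.

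Then I would paste this commuting square together with the definition of $(F\otimes G)_{n,m}$ (which is literally restriction-then-decompose, then $F_{n,m}\otimes G_{n,m}$, then the multiplication map $\Hom(F(n),F(m))\otimes\Hom(G(n),G(m))\to\Hom(F(n)\otimes G(n),F(m)\otimes G(m))$) and the standard naturality of the transpose under tensor products, namely that the square relating $(-)^*$ on $\Hom(F(n)\otimes G(n),F(m)\otimes G(m))$ to $(-)^*\otimes(-)^*$ on the factors commutes up to the canonical iso $(U\otimes V)^*\cong U^*\otimes V^*$. Assembling these compatibilities shows that $(F\otimes G)^\sharp_{m,n}$ and $(F^\sharp\otimes G^\sharp)_{m,n}$ agree after transport along the object-level isomorphism, which is exactly the statement that the latter is a natural isomorphism of functors.

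The main obstacle is the bookkeeping in the middle square: verifying that $\sigma$ on the degree $d+e$ hom-space is intertwined with $\sigma\otimes\sigma$ under the decomposition induced by $\cB_d\otimes\cB_e\hookrightarrow\cB_{d+e}$. The subtlety is that $\sigma$ reverses the direction of morphisms and involves inverting braid generators implicitly (through the $\cB_d$-invariance), so one must be careful that the self-duality $V_m\cong V_m^*$ is compatible with $R_m$ in the sense needed — i.e. that under $e_i\mapsto e_i^*$ the transpose of $R_m$ corresponds to $R_m$ acting on the dual, which follows from the explicit matrix in \eqref{R_matrix} together with the fact that the pairing used to define $\sigma$ is the one induced by the standard basis. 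Once that compatibility is isolated and checked, the rest is a formal (if tedious) diagram chase, and I would present it as such rather than writing out every entry.
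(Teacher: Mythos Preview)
Your proposal is correct and follows exactly the approach the paper intends: the paper's proof is simply ``The lemma is routine to check. It follows from the constructions of tensor product $\otimes$ and the contravariant functor $\sharp$.'' You have carried out that routine check in detail, correctly isolating the one nontrivial ingredient, namely that the transpose $\sigma$ is compatible with the tensor decomposition of hom-spaces, which in turn rests on the self-adjointness of $R_n$ with respect to the standard pairing (this fact is stated later in the paper as Lemma~\ref{Contragredient_Braid}).
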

\begin{proof}
The lemma is routine to check. It follows from the constructions of tensor product $\otimes$ and the contravariant functor  $\sharp$.
\end{proof}

\subsection{Examples}
\label{sec:exs}
The \textbf{identity functor} $I\in\PP_q^1$ is given by $I(n)=V_n$.  On morphisms it is the identity map.
We denote by $\bigotimes^d$ the $d$-th \textbf{tensor product functor}.  It is given by 
$n\mapsto  V_n^{\otimes d}$ and on morphisms by the natural inclusion 
$$\Hom_{\cB_d}(V_n^{\otimes d}, V_m^{\otimes d})\to \Hom(V_n^{\otimes d}, V_m^{\otimes d}). $$Notice that $\bigotimes^d=I^{\tn d}$.
It is also easy to see that the right action of $\cB_d$ on $V_n^{\otimes d}$ gives rise to endomorphisms of $\bigotimes^d$ as quantum polynomial functors, i.e. for any $w\in \fS_d$, $T_w: \bigotimes^d\to \bigotimes^d$ is a morphism. 

An important role will  be played by the the  functors  
$$\Gamma_q^{d,m}: n\mapsto  \Hom_{\cB_d}(V_m^{\otimes d},V_n^{\otimes d}).$$  Note that $\Gamma_q^{d,m}(n)= \Hom_{\cB_d}(V_m^{\otimes d}, V_n^{\otimes d} ).$  By   Proposition \ref{Natural_Iso} and Lemma \ref{Basis_quantum_matrix},  $\Gamma_q^{d,m}$ is a well-defined object in $\PP_q$. 
In particular when $m=1$,  it gives the  \textbf{$d$-th q-divided power} $\Gamma^d_q$.   

Let $\chi_+$ be the character of $\cB_d$ given by $\chi_+(T_i)=q$, and let $\chi_-$ be the character given by $\chi_-(T_i)=-q^{-1}$.  
We define  the \textbf{$d$-th q-symmetric power}   $S_q^d$ by
$$n\mapsto V_n^{\otimes d}\otimes_{\cB_d}  \chi_+, $$
 and the \textbf{$d$-th $q$-exterior power} $\bw_q^d$ by 
 $$n\mapsto   \bw_q^d:=V_n^{\otimes d}\otimes_{\cB_d}  \chi_-.$$ 
 For any $n$,  $ S_q^d(n) $ and $\bw_q^d(n)$ are free $\kk$-modules of finite rank, hence $S_q^d, \bw_q^d$ are examples in $\PP_q$.   

The quantum polynomial functors  $\Gamma^d_q, S^d_q$ and  $\bw_q^d$ are quantum analogues of divided power, symmetric power, exterior power functors. Moreover $(S^d_q)^\sharp\simeq \Gamma_q^d$.  Indeed when $q=1$ we recover the classical divided power, symmetric power and exterior power strict polynomial functors.  For instance, $\Gamma^d_1(n)=(V_n^{\otimes d})^{\fS_d}$, which is precisely the $d$-th divided power of $V_n$.  Moreover, $S^d_q(n)$ and  $\bw_q^d(n)$ recover  the constructions of quantum symmetric and exterior powers due to Berenstein and Zwicknagl \cite{BZ}.   

We remark also that since we are using the standard Yang-Baxter spaces the action of $\cB_d$ on  $V_n^{\otimes d}$ factors through the Hecke algebra $\cH_d$.  Therefore we could have replaced the occurrences of the braid group above by the Hecke algebra.  Now note that the characters  $\chi_{\pm}$ are the only two rank one modules of the Hecke algebra $\cH_d$, and they are  the quantum analogues of trivial and sign representation of symmetric group $\fS_d$.  The characters $\chi_\pm$ are used here to define the quantum symmetric and exterior powers in the same way that the trivial and sign representations are used to define the classical symmetric and exterior powers.

\subsection{An equivalent characterization of quantum polynomial functors}
Given a quantum polynomial functor $F$ of degree $d$ we get a finite projective $\kk$-module $F(n)$ for any $n\geq0$ and for any $m,n$, by Proposition \ref{Natural_Iso}, we  get a map: 
$$F_{m,n}: S_q(m,n;d)\to \Hom(F(m),F(n)).$$
This gives rise to maps
$$F'_{m,n}: S_q(m,n;d)\otimes F(m)\to F(n),$$
and also
$$F''_{m,n}:F(m)\to F(n)\otimes A_q(n,m)_d . $$
The following proposition gives an equivalent characterization of quantum polynomial functors in terms of the quantum matrix algebra.
\begin{proposition}
\label{prop-char}
A quantum polynomial functor $F$ of degree $d$ is equivalent to the following data: 
 \begin{enumerate}
\item for each positive integer a finite projective $\kk$-module $F(n)\in
\Vect$;
\item given any two nonnegative integers $m,n$
a $\kk$-linear map 
$$F''_{m,n}:F(m)\to  F(n)\otimes A_q(n,m)_d
$$
\end{enumerate}
such that, for any $\ell,m,n$, the following diagrams
commute
\begin{equation}
\label{functor_composition}
\xymatrix{
F(\ell)\ar[r]^{F''_{\ell,n}} \ar[d]^{F''_{\ell,m}}& 
 F(n)\otimes A_q(n,\ell)_d   \ar[d]^{ 1\otimes\Delta_{n,m,\ell} } \\
   F(m)\otimes A_q(m,\ell)_d  \ar[r]^<<<<<{1\otimes F''_{m,n}}
& F(n)\otimes A_q(n,m)_d \otimes A_q(n,\ell)_d 
}
\end{equation}
and for any $n$,
\begin{equation}
\label{functor_unit}
\xymatrix{
F(n)\ar[d]^{id}\ar[r]^<<<<<{F''_{n,n}} & 
  F(n)\otimes A_q(n,n)_d  \ar[dl]^{1\otimes \epsilon}\\
F(n)\otimes \kk 
}
\end{equation}
Here $\epsilon:A_q(n,n)_d \to \kk$ is the co-unit map (cf. Lemma \ref{counit_lemma}). 
\end{proposition}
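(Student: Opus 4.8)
The plan is to establish the equivalence as a translation exercise, exploiting the universal characterization of the quantum Hom-space algebra (Lemma~\ref{Uni_q_op}) and the duality between $A_q(n,m)_d$ and $S_q(m,n;d)$ (Proposition~\ref{Natural_Iso}). First I would record that, since $F(n)$ and $F(m)$ are finite projective over $\kk$ and $A_q(n,m)_d$ is finite free over $\kk$ (Lemma~\ref{Basis_quantum_matrix}), the three pieces of data $F_{m,n}$, $F'_{m,n}$ and $F''_{m,n}$ determine one another: giving a $\kk$-linear map $S_q(m,n;d)\to\Hom(F(m),F(n))$ is the same as giving a $\kk$-linear map $S_q(m,n;d)\otimes F(m)\to F(n)$, which by the perfect pairing between $S_q(m,n;d)=(A_q(n,m)_d)^*$ and $A_q(n,m)_d$ is the same as a $\kk$-linear map $F(m)\to F(n)\otimes A_q(n,m)_d$. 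So ``the data of $F''_{m,n}$ for all $m,n$'' is literally the same as ``the data of $F_{m,n}$ for all $m,n$''; what remains is to match the two axioms.

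Next I would show that functoriality of $F$ on $\Gamma_q^d\V$ translates exactly into the two commuting diagrams. For the composition axiom: in $\Gamma_q^d\V$, composition of morphisms is given by the multiplication $m_{\ell m n}:S_q(m,n;d)\times S_q(\ell,m;d)\to S_q(\ell,n;d)$ from (\ref{Schur_bilinear}), which is dual to the coproduct $\Delta_{V_n,V_m,V_\ell}:A_q(n,\ell)_d\to A_q(n,m)_d\otimes A_q(m,\ell)_d$ from (\ref{co-mult}). The condition $F_{m,n}(a)\circ F_{\ell,m}(b)=F_{\ell,n}(b\circ a)$ for all $a\in S_q(m,n;d)$, $b\in S_q(\ell,m;d)$, after dualizing and using that $\Delta$ is exactly the map inducing $m_{\ell m n}$, is precisely the statement that the pentagon (\ref{functor_composition}) commutes — one checks this by evaluating both routes on a pair $(a,b)$ and comparing via the definition of $b\circ a$ as $(a\otimes b)\circ\Delta_{VWU}$. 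For the unit axiom: the identity morphism $\id_n\in\Hom_{\Gamma_q^d\V}(n,n)=S_q(n,n;d)$ is, by Lemma~\ref{counit_lemma}, the counit $\epsilon_n$. The requirement $F_{n,n}(\epsilon_n)=\id_{F(n)}$ is, after transporting through the pairing, exactly the triangle (\ref{functor_unit}).

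Concretely I would argue in both directions. Given a quantum polynomial functor $F$, the maps $F''_{m,n}$ are constructed as above from $F_{m,n}$, the diagrams (\ref{functor_composition}) and (\ref{functor_unit}) follow from functoriality as just described, and finite projectivity of $F(n)$ is part of the definition. Conversely, given data $(F(n))_n$ and $(F''_{m,n})_{m,n}$ satisfying the two diagrams, define $F_{m,n}:S_q(m,n;d)\to\Hom(F(m),F(n))$ by dualizing $F''_{m,n}$; diagram (\ref{functor_composition}) forces $F$ to respect composition, diagram (\ref{functor_unit}) forces $F$ to send identities to identities, and $\kk$-linearity of $F_{m,n}$ in the $S_q(m,n;d)$ variable is automatic from the construction. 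Hence $F$ is a $\kk$-linear functor $\Gamma_q^d\V\to\V$. Finally one notes the two constructions are mutually inverse since the passage $F_{m,n}\leftrightarrow F''_{m,n}$ is a bijection.

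The main obstacle is purely bookkeeping: carefully matching the direction conventions (recall $A(W,V)$ encodes morphisms \emph{from} $W$ \emph{to} $V$, so there is a built-in op-flip) and verifying that the dual of $m_{\ell m n}$ is precisely $\Delta_{n,m,\ell}$ with the indices in the order appearing in (\ref{functor_composition}), rather than some transpose or reversal. Once the dualization dictionary between $(S_q(m,n;d),\,m_{*,*,*})$ and $(A_q(n,m)_d,\,\Delta_{*,*,*})$ is pinned down — which is essentially the content of Proposition~\ref{Iso_Com} and the associativity/co-associativity relations (\ref{associativity}) and (\ref{co-associativity}) — the two axioms fall out with no further computation, so I would not grind through the index chase in the write-up beyond indicating the one evaluation that makes the pentagon commute.
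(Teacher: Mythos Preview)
Your proposal is correct and follows essentially the same approach as the paper: both use the duality between $A_q(n,m)_d$ and $S_q(m,n;d)=\Hom_{\cB_d}(V_m^{\otimes d},V_n^{\otimes d})$ from Proposition~\ref{Natural_Iso} to pass between $F_{m,n}$ and $F''_{m,n}$, then invoke Proposition~\ref{Iso_Com} to identify the composition axiom with diagram~(\ref{functor_composition}) and Lemma~\ref{counit_lemma} to identify the identity axiom with diagram~(\ref{functor_unit}). Your write-up is in fact slightly more explicit than the paper's about why the passage $F_{m,n}\leftrightarrow F''_{m,n}$ is a bijection (finite projectivity of $F(n)$, finite freeness of $A_q(n,m)_d$), which is a welcome clarification.
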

\begin{proof}
By Proposition \ref{Natural_Iso},  for any $n,m$, $A_q(n,m)_d$ is dual to $\Hom_{\cB_d}(V_m^{\otimes d},  V_n^{\otimes d})$ as $\kk$-modules.  Given any element $\phi\in \Hom_{\cB_d}(V_m^{\otimes d},  V_n^{\otimes d})$, it is equivalent to give a $\kk$-linear functional $\tilde{\phi}: A_q(n,m)_d\to \kk $. 

Given a tuple of data $(F(n), F''_{m,n})$ which satisfies (\ref{functor_composition}) and (\ref{functor_unit}),   we can construct a quantum polynomial functor $F$, which assigns each $n\geq 0$ to $F(n)$, and on the level of morphisms, for any $\phi\in \Hom_{\cB_d}(V_m^{\otimes d},  V_n^{\otimes d})$, 
we set
 $$F(\phi):=(1_{F(n)}\otimes \tilde{\phi} )\circ F''_{m,n}.$$
  For any $\phi\in \Hom_{\cB_d}(V_m^{\otimes d},  V_n^{\otimes d}) $ and $\phi\in \Hom_{\cB_d}(V_n^{\otimes d},  V_\ell^{\otimes d})$,  we need to check that 
$$F(\psi\circ \phi)=F(\psi)\circ F(\phi) .$$
This follows from  (\ref{functor_composition}) and  Proposition  \ref{Iso_Com} by chasing diagrams.
Similarly $(\ref{functor_unit})$ implies that for the identity map $1\in \Hom_{\cB_d}(V_n^{\otimes d},  V_n^{\otimes d}) $, we have
$F(1)=1_{F(n)}$.    Therefore $F$ is a well-defined quantum polynomial functor.   

Conversely, given a quantum polynomial functor $F$,  we have explained in the beginning of this subsection how to get a tuple of data $(F(n), F''_{m,n})$, and (\ref{functor_composition}) and (\ref{functor_unit}) easily follow from the functor axioms.  
\end{proof}



\section{Finite generation and representability}
\label{sec-mainthm}

\begin{definition}
The quantum polynomial functors $F\in\PP_q^d$ is \textbf{$m$-generated} if for any $n$ the map
$$
F_{m,n}': S_q(m,n;d) \otimes F(m) \to F(n)
$$
is surjective.  $F$ is \textbf{finitely generated} if it is $m$-generated for some $m$.
\end{definition}

Let $\underline{i}=\{i_1,...,i_r\}$ be a set of  positive integers.  Define a homomorphism
$$
\phi_{\underline{i}}:A_q(n,\ell) \to\kk
$$
by $x_{k\ell}\mapsto 1$ if $k=\ell$ and $k\in\underline{i}$, and otherwise $x_{k\ell}\mapsto0$. By Lemma \ref{Qpresent} $\phi_{\underline{i}}$ is a well-defined homomorphism of algebras .   By restriction we get a $\kk$-linear map
$$
\phi_{\underline{i}}^d:  A_q(n,m)_d \to\kk.
$$
In other words, $\phi_{\underline{i}}^d\in S_q(m,n;d)$.  For $F\in\PP_q^d$ we get a morphism 
 $$F_{m,n}(\phi_{\underline{i}}^d) \in \Hom(F(m),F(n)).$$

\begin{lemma}[Lemma 2.8, \cite{FS}]
\label{Lem-FS}
Let $V$ be a free $\kk$-module of finite rank.  We  fix elements $v_1,v_2,\cdots,v_n\in
V$. For any homogeneous polynomial $f\in S^d(V^*)$ of degree $d$, if $d< n$ then
$$f(v_1+v_2+\cdots+v_n)=\sum_{\und i\subset \{1,2,\cdots,n\}, |\und i|\leq d}(-1)^{n-|\und i|}f(\sum_{k\in \und i} v_k),$$ 
where $|\und i|$ is the cardinality of the set $\und i\subset \{1,2,\cdots,n\}$. 
\end{lemma}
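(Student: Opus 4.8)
The plan is to prove this by a direct inclusion–exclusion computation, exploiting multilinearity. Since $f\in S^d(V^*)$ is a homogeneous polynomial of degree $d$, it arises from a symmetric $d$-linear form $\tilde f\colon V^{\times d}\to\kk$ via $f(v)=\tilde f(v,\dots,v)$. First I would expand $f(v_1+\cdots+v_n)=\tilde f(v_1+\cdots+v_n,\dots,v_1+\cdots+v_n)$ multilinearly, obtaining a sum over functions $\alpha\colon\{1,\dots,d\}\to\{1,\dots,n\}$ of the terms $\tilde f(v_{\alpha(1)},\dots,v_{\alpha(d)})$. Grouping these terms by the \emph{image} $S=\alpha(\{1,\dots,d\})\subseteq\{1,\dots,n\}$, one sees $f(v_1+\cdots+v_n)=\sum_{\emptyset\neq S\subseteq\{1,\dots,n\}} g(S)$, where $g(S)$ collects exactly those monomials in which every variable indexed by an element of $S$ actually appears. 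Crucially, since $d<n$, no function $\alpha\colon\{1,\dots,d\}\to\{1,\dots,n\}$ can be surjective, so $g(\{1,\dots,n\})=0$, and more generally $g(S)=0$ whenever $|S|>d$; hence only subsets with $|S|\le d$ contribute.

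Next I would observe that for any subset $T\subseteq\{1,\dots,n\}$ the same grouping applied to the smaller sum gives $f\bigl(\sum_{k\in T}v_k\bigr)=\sum_{\emptyset\neq S\subseteq T}g(S)$. This is a Möbius-inversion identity on the Boolean lattice: writing $h(T):=f(\sum_{k\in T}v_k)$ we have $h(T)=\sum_{S\subseteq T}g(S)$ (with $g(\emptyset)=0$), so by inclusion–exclusion $g(S)=\sum_{T\subseteq S}(-1)^{|S|-|T|}h(T)$. Applying this with $S=\{1,\dots,n\}$ and using $g(\{1,\dots,n\})=0$ yields $0=\sum_{T\subseteq\{1,\dots,n\}}(-1)^{n-|T|}h(T)$, which rearranges to
$$h(\{1,\dots,n\})=\sum_{T\subsetneq\{1,\dots,n\}}(-1)^{n-|T|+1}h(T)=-\sum_{T\subsetneq\{1,\dots,n\}}(-1)^{n-|T|}h(T).$$
Hmm — this differs in sign from the stated identity, so I would double-check the bookkeeping: the correct route is to use that $h(T)=\sum_{S\subseteq T}g(S)$ together with the vanishing $g(S)=0$ for all $|S|>d$, which (for $d<n$) lets one \emph{replace} $f(v_1+\cdots+v_n)=\sum_{S}g(S)$ by a sum over $|S|\le d$ only, and then re-express each $g(S)$ for $|S|\le d$ back in terms of the $h(T)$ with $T\subseteq S$, $|T|\le|S|\le d$, collecting the coefficient of each $h(\und i)$. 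The coefficient of $h(\und i)$ ($|\und i|\le d$) in the resulting double sum is $\sum_{\und i\subseteq S,\,|S|\le d}(-1)^{|S|-|\und i|}$, but since the constraint $|S|\le d$ is automatically satisfied by all $S\supseteq\und i$ that could occur... actually one needs the cleaner argument: because $g(S)=0$ for $|S|>d$, we have $\sum_{S\supseteq\und i}(-1)^{|S|-|\und i|}g(S)$ telescopes appropriately. I would sort out the precise sign by testing $n=d+1$ directly.

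The only genuine content here is the observation that $d<n$ forces the top terms $g(S)$ with $|S|>d$ to vanish — everything else is formal inclusion–exclusion on the Boolean lattice, i.e. the standard identity $\sum_{\und i\subseteq S\subseteq\{1,\dots,n\}}(-1)^{|S|-|\und i|}=0$ for $\und i\subsetneq\{1,\dots,n\}$. Thus the \textbf{main obstacle} is purely clerical: tracking signs and cardinalities so that the final coefficient of $f(\sum_{k\in\und i}v_k)$ comes out to exactly $(-1)^{n-|\und i|}$, and confirming that the range of summation is $|\und i|\le d$ (equivalently $\und i\subsetneq\{1,\dots,n\}$, since $d<n$). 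I expect the whole argument to take only a few lines once the multilinear expansion is set up, so I would present it compactly: expand, group by support, kill the large-support terms using $d<n$, and invert.
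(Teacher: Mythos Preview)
The paper does not supply its own proof of this lemma; it is quoted from \cite{FS} and used as a black box in the proof of Lemma~\ref{lem-int}. Your approach---lift $f$ to a $d$-linear form, expand multilinearly, group terms by their support $S\subseteq\{1,\dots,n\}$, and invert on the Boolean lattice---is the standard one and is correct in outline. One small technical caveat: over a general commutative ring $\kk$ a homogeneous $f\in S^d(V^*)$ need not come from a \emph{symmetric} $d$-linear form, but it always lifts to some $F\in(V^*)^{\otimes d}$ since $S^d(V^*)$ is a quotient of $(V^*)^{\otimes d}$; your argument only uses multilinearity, so this suffices.

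The sign discrepancy you flagged is genuine and is not a bookkeeping slip on your part: the displayed formula is not literally correct. For instance with $d=1$, $n=2$ and $f$ linear, the right-hand side is $(-1)^2f(0)+(-1)^1f(v_1)+(-1)^1f(v_2)=-f(v_1)-f(v_2)$, the negative of the left-hand side. What your method actually yields is
\[
f(v_1+\cdots+v_n)=\sum_{|\und i|\le d}\Biggl(\sum_{j=0}^{d-|\und i|}(-1)^j\binom{n-|\und i|}{j}\Biggr) f\Bigl(\sum_{k\in\und i}v_k\Bigr),
\]
obtained by writing $f(v_1+\cdots+v_n)=\sum_{|S|\le d}g(S)$ (using $d<n$ to kill the large-support terms) and then substituting $g(S)=\sum_{T\subseteq S}(-1)^{|S|-|T|}h(T)$. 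These coefficients are integers, which is all that the application in Lemma~\ref{lem-int} requires, so the imprecision in the stated coefficients is harmless for the paper. Your instinct to test a small case was the right move; the conclusion is that the statement should be read as ``$f(v_1+\cdots+v_n)$ is an integral linear combination of the $f(\sum_{k\in\und i}v_k)$ with $|\und i|\le d$,'' and your argument proves exactly that.
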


\begin{lemma}
\label{lem-int}
If $m>d$ then $\phi_{\{1,...,m\}}^d\in S_q(m,m;d)$ is an integral linear combination of $\phi_{\underline{i}}^d$ where $|\underline{i}|\leq d$.
\end{lemma}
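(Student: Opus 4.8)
The plan is to reduce the assertion to its classical source, Lemma~\ref{Lem-FS}, by exploiting the fact that each functional $\phi_{\underline i}^d$ only detects the ``diagonal part'' of $A_q(m,m)_d$.

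First I would produce a surjective algebra homomorphism $p\colon A_q(m,m)\to \kk[z_1,\ldots,z_m]$ onto the ordinary commutative polynomial ring, determined on generators by $x_{kk}\mapsto z_k$ and $x_{k\ell}\mapsto 0$ for $k\neq \ell$. Well-definedness is checked against the relations in Lemma~\ref{Qpresent}: in each of the first three families the two generators occurring in any monomial cannot both be diagonal (the inequalities $k>\ell$ and $i>j$ forbid it), so every monomial is killed by $p$; in the last family, when $i=k$ and $j=\ell$ the relation maps to the trivial commutation $z_iz_j=z_jz_i$ with a right-hand side built from off-diagonal generators, and otherwise it again has an off-diagonal generator in every monomial. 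The map $p$ is graded, hence restricts to $p\colon A_q(m,m)_d\to\kk[z_1,\ldots,z_m]_d$, and by construction $\phi_{\underline i}=\mathrm{ev}_{\chi_{\underline i}}\circ p$, where $\chi_{\underline i}\in\kk^m$ is the characteristic vector of $\underline i$; consequently $\phi_{\underline i}^d=\mathrm{ev}_{\chi_{\underline i}}\circ p$ on $A_q(m,m)_d$.

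Next, fix an arbitrary $f\in A_q(m,m)_d$ and put $g:=p(f)\in\kk[z_1,\ldots,z_m]_d$. Viewing $\kk[z_1,\ldots,z_m]_d=S^d(V^*)$ with $V=\kk^m$ and $v_1,\ldots,v_m$ the standard basis, we have $\chi_{\{1,\ldots,m\}}=v_1+\cdots+v_m$ and $\chi_{\underline i}=\sum_{k\in\underline i}v_k$. Since $m>d$, Lemma~\ref{Lem-FS} applies to $g$ and gives
$$
g(v_1+\cdots+v_m)=\sum_{\underline i\subseteq\{1,\ldots,m\},\ |\underline i|\leq d}(-1)^{m-|\underline i|}\,g\Big(\sum_{k\in\underline i}v_k\Big),
$$
which, evaluated through $p$, reads $\phi_{\{1,\ldots,m\}}^d(f)=\sum_{|\underline i|\leq d}(-1)^{m-|\underline i|}\,\phi_{\underline i}^d(f)$. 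As $f$ ranges over $A_q(m,m)_d$ this establishes the identity $\phi_{\{1,\ldots,m\}}^d=\sum_{|\underline i|\leq d}(-1)^{m-|\underline i|}\phi_{\underline i}^d$ in $S_q(m,m;d)=(A_q(m,m)_d)^*$, which is an integral linear combination of the required form.

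The only point needing care is the well-definedness of $p$; this is a finite, mechanical case check on the four relation families of Lemma~\ref{Qpresent}, and if one prefers to avoid it one can argue directly with the monomial basis of Lemma~\ref{Basis_quantum_matrix}: every basis monomial containing an off-diagonal variable is annihilated by all $\phi_{\underline i}$, while the $\kk$-span of the purely diagonal ordered monomials is carried isomorphically onto $\kk[z_1,\ldots,z_m]_d$ compatibly with the evaluations $\mathrm{ev}_{\chi_{\underline i}}$, so the reduction to Lemma~\ref{Lem-FS} proceeds verbatim.
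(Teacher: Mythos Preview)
Your proof is correct and follows essentially the same approach as the paper: both construct the algebra map $A_q(m,m)\to\kk[z_1,\ldots,z_m]$ sending $x_{k\ell}\mapsto\delta_{k\ell}z_k$, observe that each $\phi_{\underline i}$ factors through it as evaluation at the characteristic vector $\chi_{\underline i}$, and then invoke Lemma~\ref{Lem-FS}. Your write-up is a bit more explicit about verifying well-definedness against the relations (and offers the monomial-basis alternative), but the argument is the same.
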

\begin{proof}

There is a  homomorphism of algebras $\delta: A_q(m,m)  \to \kk[t_{1},t_{2},\cdots,t_{m}]$ given by $x_{kk}\mapsto t_k$; $x_{k\ell}\mapsto 0$ if $k\not=\ell$. Note that for any $\und{i}$, 
 $\phi_{\und i}$ factors through the homomorphism $\tilde \phi_{\und i}: \kk[t_1,t_2,\cdots,t_m]\to \kk$, where 
$$\tilde \phi_{\und i}(t_k)=\begin{cases}
1 \quad \text{ if } k\in \und i \\
0 \quad \text{ otherwise} 
\end{cases}, $$
i.e. we have the following commutative diagram:
\begin{equation}
\xymatrix{ 
A_q(m,m)\ar[r]^{\delta}\ar[rd]^{\phi_{\und i}} & \kk[t_1,\cdots,t_m] \ar[d]^{\tilde \phi_{\und i}}\\
& \kk
}.
\end{equation}
Let $\tilde \phi_{\und i}^d$ be the restriction of $\tilde \phi_{\und i}$ to $\kk[t_1,t_2,\cdots,t_m]_d$, where $\kk[t_1,t_2,\cdots,t_m]_d$ is the space of homogenous polynomials in $t_1,t_2,\cdots,t_m$ of degree $d$.  Observe that for any polynomial $f\in \kk[t_1,t_2,\cdots,t_m]_d$,  we have $$\tilde \phi^d_{\und i}(f)= f(\sum_{i\in \und i}e_i),$$
 where  $e_{i}$ is the $i$-th basis in $\kk^m$.  Therefore the lemma follows from Lemma \ref{Lem-FS}.

\end{proof}

%

\begin{lemma}
\label{lem-comp}
Let $\underline{i},\underline{j}$ be sets of positive integers and consider $\phi_{\underline{i}}^d\in S_q(\ell,m;d)$ and $\phi_{\underline{j}}^d \in S_q(m,n;d)$.  Furthermore consider $\phi_{\underline{i}\cap\underline{j}}^d\in S_q(\ell,n;d)$.
Then we have
$$
\phi_{\underline{j}}^d\circ \phi_{\underline{i}}^d=\phi_{\underline{i}\cap\underline{j}}^d.
$$
Therefore $F_{m,n}(\phi_{\underline{j}}^d)\circ F_{\ell,m}(\phi_{\underline{i}}^d) =F_{m,n}(\phi_{\underline{i}\cap\underline{j} }^d)    $.
\end{lemma}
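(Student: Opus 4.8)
The plan is to unwind the definition of composition in the rectangular $q$-Schur algebras and thereby reduce the identity to a one-line check on generators of a quantum matrix algebra. By the bilinear pairing $m_{*,*,*}$ of (\ref{Schur_bilinear}), which is obtained by dualizing the degree-preserving comultiplication $\Delta_{*,*,*}$ of (\ref{co-mult}), the composite $\phi_{\underline{j}}^d\circ\phi_{\underline{i}}^d\in S_q(\ell,n;d)=(A_q(n,\ell)_d)^*$ is precisely the restriction to $A_q(n,\ell)_d$ of the $\kk$-linear functional
$$A_q(n,\ell)\xrightarrow{\Delta_{n,m,\ell}} A_q(n,m)\otimes A_q(m,\ell)\xrightarrow{\phi_{\underline{j}}\otimes\phi_{\underline{i}}}\kk ,$$
where $\phi_{\underline{j}}$ must be fed the first tensor factor and $\phi_{\underline{i}}$ the second, since these are functionals on $(A_q(n,m)_d)^*$ and $(A_q(m,\ell)_d)^*$ respectively.

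The key observation is that all three maps in this composite are homomorphisms of $\kk$-algebras: $\Delta_{n,m,\ell}$ because it was constructed via the universal property of Lemma \ref{Uni_q_op}, and $\phi_{\underline{i}},\phi_{\underline{j}}$ by Lemma \ref{Qpresent}. Hence the composite is itself an algebra homomorphism $A_q(n,\ell)\to\kk$, so it is determined by its values on the generators $x_{ab}$; and since $\phi_{\underline{i}\cap\underline{j}}$ is likewise an algebra homomorphism, it suffices to check agreement on generators. Using $\Delta_{n,m,\ell}(x_{ab})=\sum_{c=1}^m x_{ac}\otimes x_{cb}$ and $\phi_{\underline{k}}(x_{ab})=\delta_{ab}$ if $a\in\underline{k}$ and $0$ otherwise, the composite sends $x_{ab}\mapsto\sum_c\phi_{\underline{j}}(x_{ac})\phi_{\underline{i}}(x_{cb})$, whose only possibly nonzero term has $a=c=b$ with $a\in\underline{i}$ and $a\in\underline{j}$; this equals $\phi_{\underline{i}\cap\underline{j}}(x_{ab})$. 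Restricting to degree $d$ gives the first assertion $\phi_{\underline{j}}^d\circ\phi_{\underline{i}}^d=\phi_{\underline{i}\cap\underline{j}}^d$. The last assertion then follows at once from functoriality of $F$: since composition in $\Gamma_q^d\V$ is identified with $m_{*,*,*}$ via Propositions \ref{Natural_Iso} and \ref{Iso_Com}, we have $F_{m,n}(\phi_{\underline{j}}^d)\circ F_{\ell,m}(\phi_{\underline{i}}^d)=F_{\ell,n}(\phi_{\underline{j}}^d\circ\phi_{\underline{i}}^d)=F_{\ell,n}(\phi_{\underline{i}\cap\underline{j}}^d)$.

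The only step requiring genuine care, and the one place the argument could go wrong, is the bookkeeping of the many index and variance conventions involved: which copy of $A_q(\cdot,\cdot)$ sits in which slot of $\Delta_{n,m,\ell}$, which of $\phi_{\underline{i}},\phi_{\underline{j}}$ evaluates each slot, and how these match the direction conventions in the definition of $m_{*,*,*}$ and in Propositions \ref{Natural_Iso} and \ref{Iso_Com}. Once these are pinned down, every step above is immediate; in particular no basis-counting or dimension argument is needed, which is in keeping with the spirit of Proposition \ref{Natural_Iso}.
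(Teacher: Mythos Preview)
Your proof is correct and follows essentially the same route as the paper: reduce $\phi_{\underline{j}}^d\circ\phi_{\underline{i}}^d$ to the algebra homomorphism $(\phi_{\underline{j}}\otimes\phi_{\underline{i}})\circ\Delta_{n,m,\ell}$, check agreement with $\phi_{\underline{i}\cap\underline{j}}$ on the generators $x_{ab}$, and deduce the second statement by functoriality. The only difference is that you spell out more carefully why checking on generators suffices and how the variance conventions line up, which the paper leaves implicit.
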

\begin{proof}
It suffices to show that $
(\phi_{\underline{j}}\otimes \phi_{\underline{i}})\circ \Delta_{n,m,\ell}=\phi_{\underline{i}\cap\underline{j}}
$, and for this it suffices to show that both sides of the equation agree on $x_{ab}\in A_q(n,\ell)$:
\begin{align}
(\phi_{\underline{j}}\otimes \phi_{\underline{i}})(\Delta_{n,m,\ell}(x_{ab})) &= (\phi_{\underline{j}}\otimes \phi_{\underline{i}})(\sum_{p=1}^m x_{ap}\otimes x_{pb}) \\
&= \sum_{p=1}^m \phi_{\underline{j}}(x_{ap}) \phi_{\underline{i}}(x_{pb}) 
\end{align}
Since $\phi_{\underline{j}}(x_{ap}) \phi_{\underline{i}}(x_{pb})=1$ if and only if $a=b=p$ and $a\in \underline{i}\cap\underline{j}$ we have that 
$$
\sum_{p=1}^m \phi_{\underline{j}}(x_{ap}) \phi_{\underline{i}}(x_{pb}) =\phi_{\underline{i}\cap\underline{j}}(x_{ab}).
$$
The second statement of the lemma follows immediately. 
\end{proof}

\begin{proposition}
\label{prop-gen}
$F\in\PP_q^d$ is $m$-generated for any $m\geq d$.
\end{proposition}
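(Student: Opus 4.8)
The plan is to exhibit, for each $n$, a factorization of the identity of $F(n)$ through $F(m)$ coming from morphisms of $\Gamma_q^d\V$ — the quantum avatar of the Friedlander--Suslin argument. First I would reduce the claim to showing that $1_{F(n)}$ lies in the image of the composition map $S_q(m,n;d)\otimes S_q(n,m;d)\to\End(F(n))$, $a\otimes b\mapsto F_{m,n}(a)\circ F_{n,m}(b)$: if $1_{F(n)}=\sum_k F_{m,n}(a_k)\circ F_{n,m}(b_k)$, then for any $x\in F(n)$ one has $x=1_{F(n)}(x)=F'_{m,n}\bigl(\sum_k a_k\otimes F_{n,m}(b_k)(x)\bigr)$ with each $F_{n,m}(b_k)(x)\in F(m)$, so $F'_{m,n}$ is surjective. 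By Lemma \ref{counit_lemma} the unit of $S_q(n,n;d)$ is $\epsilon_n$, and since $\epsilon(x_{ij})=\delta_{ij}=\phi_{\{1,\ldots,n\}}(x_{ij})$ we get $1_n=\phi^d_{\{1,\ldots,n\}}$, hence $1_{F(n)}=F_{n,n}(\phi^d_{\{1,\ldots,n\}})$.

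The next step is to expand $\phi^d_{\{1,\ldots,n\}}$ into idempotents of bounded rank. If $n\le m$ no expansion is needed. If $n>m\ge d$, then $n>d$, and the computation proving Lemma \ref{lem-int} applies verbatim with $\{1,\ldots,n\}$ in place of $\{1,\ldots,m\}$ (it uses only that the index set has size $>d$), giving integers $c_{\underline{i}}$ with
$$\phi^d_{\{1,\ldots,n\}}=\sum_{\substack{\underline{i}\subseteq\{1,\ldots,n\}\\ |\underline{i}|\le d}}c_{\underline{i}}\,\phi^d_{\underline{i}}\qquad\text{in }S_q(n,n;d).$$
In either case $\phi^d_{\{1,\ldots,n\}}$ is a $\bZ$-linear combination of idempotents $\phi^d_{\underline{i}}$ ($\underline{i}\subseteq\{1,\ldots,n\}$, $|\underline{i}|\le m$), so it suffices to prove that $F_{n,n}(\phi^d_{\underline{i}})$ factors through $F(m)$ for each such $\underline{i}$.

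This is the crux, and the new ingredient is a family of order-preserving partial-bijection morphisms. Writing $\underline{i}=\{i_1<\cdots<i_r\}$ with $r\le m$, I would use $\pi_{\underline{i}}\colon V_n\to V_m$ sending $e_{i_t}\mapsto e_t$ $(1\le t\le r)$ and $e_j\mapsto 0$ $(j\notin\underline{i})$, and $\iota_{\underline{i}}\colon V_m\to V_n$ sending $e_t\mapsto e_{i_t}$ $(1\le t\le r)$ and $e_t\mapsto 0$ $(t>r)$. Since the standard $R$-matrix (\ref{R_matrix}) depends only on the relative order of basis indices, and $\pi_{\underline{i}},\iota_{\underline{i}}$ are monotone on the supports on which they do not vanish, a short case analysis yields $\pi_{\underline{i}}^{\otimes 2}R_n=R_m\pi_{\underline{i}}^{\otimes 2}$ and $\iota_{\underline{i}}^{\otimes 2}R_m=R_n\iota_{\underline{i}}^{\otimes 2}$; hence $\pi_{\underline{i}}^{\otimes d}$ and $\iota_{\underline{i}}^{\otimes d}$ are $\cB_d$-equivariant, i.e.\ morphisms $n\to m$ and $m\to n$ in $\Gamma_q^d\V$. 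Their composite $\iota_{\underline{i}}^{\otimes d}\circ\pi_{\underline{i}}^{\otimes d}=(\iota_{\underline{i}}\circ\pi_{\underline{i}})^{\otimes d}$ is the $d$-th tensor power of the projection of $V_n$ onto the span of $\{e_i:i\in\underline{i}\}$, and a direct computation with the formula $\delta_{V_n,V_n}(e_a)=\sum_b e_b\otimes x_{ba}$ identifies this morphism with $\phi^d_{\underline{i}}\in S_q(n,n;d)$ under Proposition \ref{Natural_Iso}. Applying $F$ and using functoriality gives $F_{n,n}(\phi^d_{\underline{i}})=F_{m,n}(\iota_{\underline{i}}^{\otimes d})\circ F_{n,m}(\pi_{\underline{i}}^{\otimes d})$, which factors through $F(m)$; summing over $\underline{i}$ with the coefficients $c_{\underline{i}}$ yields the desired factorization of $1_{F(n)}$.

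The main obstacle is conceptual: one must realize that the rank-$\le m$ diagonal idempotents $\phi^d_{\underline{i}}$ are exactly the endomorphisms of $V_n^{\otimes d}$ that visibly factor through $V_m^{\otimes d}$ inside $\Gamma_q^d\V$, and find the morphisms realizing this — the order-preserving partial bijections above, which (unlike an arbitrary permutation of basis vectors, which fails to commute with $R_n$ unless $q=q^{-1}$) do intertwine the standard $R$-matrices. Once this is isolated, checking the intertwining property, matching the composite with $\phi^d_{\underline{i}}$, and invoking Lemmas \ref{Lem-FS} and \ref{lem-int} are all routine.
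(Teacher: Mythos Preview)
Your argument is correct and follows the same global strategy as the paper: reduce to showing that $1_{F(n)}=F_{n,n}(\phi^d_{\{1,\dots,n\}})$ lies in the image of the composition $S_q(m,n;d)\otimes S_q(n,m;d)\to \End(F(n))$, expand $\phi^d_{\{1,\dots,n\}}$ into idempotents $\phi^d_{\underline{i}}$ with $|\underline{i}|\le d$ via Lemma~\ref{lem-int}, and then factor each $\phi^d_{\underline{i}}$ through~$m$.

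The genuine difference is in the last step. The paper factors $\phi^d_{\underline{i}}\in S_q(n,n;d)$ as the composite of the like-named elements $\phi^d_{\underline{i}}\in S_q(m,n;d)$ and $\phi^d_{\underline{i}}\in S_q(n,m;d)$, appealing to Lemma~\ref{lem-comp}. You instead construct explicit $\cB_d$-equivariant maps $\pi_{\underline{i}}^{\otimes d}$ and $\iota_{\underline{i}}^{\otimes d}$ coming from the order-preserving partial bijection between $\underline{i}$ and $\{1,\dots,|\underline{i}|\}$, and verify that their composite is $P_{\underline{i}}^{\otimes d}$, which you then identify with $\phi^d_{\underline{i}}$ under Proposition~\ref{Natural_Iso}. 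Your route buys something real: when $m<n$ the sets $\underline{i}$ produced by Lemma~\ref{lem-int} live in $\{1,\dots,n\}$ and can contain elements larger than $m$, and in that situation the rectangular elements $\phi^d_{\underline{i}}\in S_q(n,m;d)$, $S_q(m,n;d)$ only see $\underline{i}\cap\{1,\dots,m\}$ (e.g.\ for $n=2$, $m=d=1$, $\underline{i}=\{2\}$ one gets zero on both sides), so the conclusion of Lemma~\ref{lem-comp} as literally stated does not apply. Your compression maps sidestep this by relabelling $\underline{i}$ into $\{1,\dots,|\underline{i}|\}\subset\{1,\dots,m\}$, which is exactly what is needed; the monotonicity is what makes the intertwining with the standard $R$-matrix go through. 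The paper's approach, by contrast, avoids introducing any new morphisms beyond the $\phi^d_{\underline{i}}$ and keeps the argument entirely on the dual (coalgebra) side, at the cost of this subtlety.
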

\begin{proof}
We need to show that $F_{m,n}':S_q(m,n;d)\otimes F(m) \to F(n)$ given by $\phi\otimes v \mapsto F_{m,n}(\phi)(v)$ is surjective for any $n$.  

Suppose $m\geq n$ and choose $\underline{i}=\{1,...,n\}$.  By Lemma \ref{lem-comp}, $F_{n,n}(\phi_{\underline{i}}^d) =F_{m,n}(\phi_{\underline{i}}^d) \circ F_{m,n}(\phi_{\underline{i}}^d) $.  Now note that $\phi_{\underline{i}}^d\in S_q(n,n;d)$  is the unit element by Lemma \ref{counit_lemma}, and hence $F_{n,n}(\phi_{\underline{i}}^d) =1_{F(n)}$.  Therefore $F_{m,n}(\phi_{\underline{i}}^d) $ is surjective which  implies that $F_{m,n}'$ is as well.

Now suppose $m<n$.  By Lemma \ref{lem-int} the identity operator $1_{F(n)}$ is an integral linear combination of $F_{m,m}(\phi_{\underline{i}}^d) $, where $|\underline{i}|\leq d$.  Therefore we have, by Lemma \ref{lem-comp},
\begin{align*}
1_{F(n)} &= \sum_{|\underline{i}|\leq d}a_{\underline{i}}F_{n,n}(\phi_{\underline{i}}^d)  \\
&= \sum_{|\underline{i}|\leq d}a_{\underline{i}}F_{m,n}(\phi_{\underline{i}}^d) \circ F_{n,m}(\phi_{\underline{i}}^d) ,
\end{align*}
where $a_{\underline{i}}\in\mathbb{Z}$ and only finitely many are nonzero.  Given $v\in F(n)$ let $v_{\underline{i}}=(F_{n,m}(\phi_{\underline{i}}^d) ) (v)$.  Then we have that $v=\sum_{|\underline{i}|\leq d}a_{\underline{i}}(F_{m,n}(\phi_{\underline{i}}^d) )(v_{\underline{i}})$, i.e.
$$
F_{m,n}'\left(\sum_{|\underline{i}|\leq d}a_{\underline{i}}\phi_{\underline{i}}^d\otimes v_{\underline{i}}\right)=v
$$
proving that $F_{m,n}'$ is surjective.  
\end{proof}

\begin{proposition}
\label{Yonedalemma}
For any $n\geq 0$, the divided power $\Gamma_q^{d,n}$ represents the evaluation functor $\PP_q^d \to \V$ given by $F\mapsto F(n)$, i.e. there exists a canonical isomorphism 
$$\Hom_{\PP_q^d}(\Gamma_q^{d,n}, F)\simeq F(n). $$
Hence $\Gamma_q^{d,n}$ is a projective object in $\PP_q^d$.
\end{proposition}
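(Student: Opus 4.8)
The plan is to establish the representability statement by a Yoneda-type argument adapted to the exact category $\PP_q^d$, following the classical template of \cite{FS} but substituting the quantum Hom-space algebra $A_q(n,m)_d$ and Proposition \ref{prop-char} for their polynomial-functor formalism. First I would construct the natural evaluation map $\mathrm{ev}_n : \Hom_{\PP_q^d}(\Gamma_q^{d,n}, F) \to F(n)$. Recall $\Gamma_q^{d,n}(n) = \Hom_{\cB_d}(V_n^{\otimes d},V_n^{\otimes d}) \cong S_q(n,n;d)$, which by Lemma \ref{counit_lemma} contains the unit element $\epsilon_n$ (equivalently the identity in $\Hom_{\cB_d}$). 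Given a natural transformation $\eta : \Gamma_q^{d,n} \to F$, set $\mathrm{ev}_n(\eta) := \eta_n(\mathrm{id}) \in F(n)$, where $\mathrm{id}$ denotes the identity element of $\Gamma_q^{d,n}(n)$ under the identification of Proposition \ref{Natural_Iso}. This is clearly $\kk$-linear in $\eta$.

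Next I would construct the inverse. Given $v \in F(n)$, I need to produce a natural transformation $\eta^v : \Gamma_q^{d,n} \to F$. For each $\ell$, the component $\eta^v_\ell : \Gamma_q^{d,n}(\ell) = \Hom_{\cB_d}(V_n^{\otimes d}, V_\ell^{\otimes d}) \to F(\ell)$ should be $\phi \mapsto F_{n,\ell}(\phi)(v)$; equivalently, in the dual language of Proposition \ref{prop-char}, this is the map obtained by composing $F''_{n,\ell} : F(n) \to F(\ell) \otimes A_q(\ell,n)_d$ with the pairing against $v$ and then with the duality $A_q(\ell,n)_d \cong \Gamma_q^{d,n}(\ell)^*$. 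I then must check two things: (i) naturality of $\eta^v$, i.e. that $\eta^v$ commutes with the action of any morphism $\psi \in \Hom_{\Gamma_q^d\V}(\ell,\ell')$, and (ii) that $v \mapsto \eta^v$ and $\eta \mapsto \eta_n(\mathrm{id})$ are mutually inverse. Naturality follows from the fact that the map on morphisms of $\Gamma_q^{d,n}$ is itself given by composition in $\Gamma_q^d\V$, combined with the associativity of composition — concretely, this is exactly the content of the commuting square (\ref{functor_composition}) together with Proposition \ref{Iso_Com} (compatibility of $\Delta_{*,*,*}$ with composition), which is how $F$ being a functor gets used. That $\mathrm{ev}_n(\eta^v) = v$ uses the counit/unit diagram (\ref{functor_unit}): evaluating at $\mathrm{id}$ picks out the image of $v$ under $(1 \otimes \epsilon) \circ F''_{n,n} = \mathrm{id}_{F(n)}$. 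That $\eta^{\,\eta_n(\mathrm{id})} = \eta$ uses naturality of $\eta$ applied to an arbitrary morphism $V_n^{\otimes d} \to V_\ell^{\otimes d}$, writing such a morphism as its own image under the $\Gamma_q^{d,n}$-action on the identity.

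The last sentence, projectivity of $\Gamma_q^{d,n}$ in the exact category $\PP_q^d$, is then formal: the isomorphism $\Hom_{\PP_q^d}(\Gamma_q^{d,n}, -) \cong (-)(n)$ is natural in $F$, and the evaluation-at-$n$ functor $F \mapsto F(n)$ is exact because admissible short exact sequences in $\PP_q^d$ are, by definition, those that are exact (split, even, since we are over $\kk$-module categories of finite projectives) pointwise; hence $\Hom_{\PP_q^d}(\Gamma_q^{d,n}, -)$ is exact, which is precisely the statement that $\Gamma_q^{d,n}$ is projective.

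The main obstacle I anticipate is verifying naturality of $\eta^v$ cleanly — keeping straight the three-way bookkeeping between the braid-intertwiner picture $\Hom_{\cB_d}(V_n^{\otimes d}, V_\ell^{\otimes d})$, its dual the coinvariant picture $A_q(\ell,n)_d$, and the comultiplication $\Delta_{\ell',\ell,n}$, so that the diagram (\ref{functor_composition}) really does say what is needed. The cleanest route is probably to do the entire argument in the dual (the $F''$) picture, where everything is a genuine commuting diagram of $\kk$-linear maps and one only needs to cite (\ref{functor_composition}), (\ref{functor_unit}), co-associativity (\ref{co-associativity}), and Proposition \ref{Iso_Com}; then transport back via Proposition \ref{Natural_Iso} at the very end. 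I do not expect any of the individual diagram chases to be hard, only tedious, so I would state the construction of $\mathrm{ev}_n$ and its inverse precisely and then assert that the verifications are routine diagram chases using the cited results.
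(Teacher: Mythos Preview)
Your proposal is correct and follows essentially the same Yoneda-type argument as the paper: the paper defines exactly your two maps $\psi(f)=f(n)(1_n)$ and $\phi_w(m)(x)=F_{n,m}(x)(w)$, and simply asserts they are mutually inverse. Your only real difference is that you propose carrying out the naturality check in the dual $(F'')$ picture via Proposition~\ref{prop-char} and the diagrams (\ref{functor_composition}),(\ref{functor_unit}); the paper stays in the direct $F_{n,m}$ picture, where naturality of $\eta^v$ is immediate from $F$ being a functor (i.e.\ $F_{\ell,\ell'}(\psi)\circ F_{n,\ell}(\phi)=F_{n,\ell'}(\psi\circ\phi)$), so the detour through $A_q$ and $\Delta$ is unnecessary.
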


\begin{proof}
We first show that given $F\in\PP_q^d$ there are natural isomorphisms 
$$
\Hom_{\PP_q^d}(\Gamma^{d,n}_q,F)\cong F(n)
$$
for any $n$.  
Consider the map $\phi:F(n) \to \Hom_{\PP_q^d}(\Gamma^{d,n}_q,F)$ given by $w\mapsto\phi_w$, where $\phi_w:\Gamma^{d,n}_q \to F$ is the natural transformation
$$
\phi_w(-)=\mathsf{ev}_w\circ F_{n,-}.
$$
In other words, $\phi_w(m):\Gamma^{d,n}_q(m) \to F(m)$ is the map 
$$
x\in \Hom_{\cB_d}(V_n^{\otimes d},V_m^{\otimes d}) \mapsto F_{n,m}(x)(w) \in F(m).
$$
Conversely, consider the map $\psi:\Hom_{\PP_q^d}(\Gamma^{d,n}_q,F) \to F(n)$ defined as follows:
\begin{align*}
f \in \Hom_{\PP_q^d}(\Gamma^{d,n}_q,F) & \leadsto f(n):\End_{\cB_d}(V_n^{\otimes d}) \to F(n) \\
& \leadsto f(n)(1_n) \in F(n)
\end{align*}
where $1_n\in \End_{\cB_d}(V_n^{\otimes d}) $ is the identity operator.  

Unpackaging these definitions we see that $\phi$ is inverse to $\psi$, proving that $\Gamma^{d,n}_q$ represents the evaluation functor.  It follows that $\Gamma^{d,n}_q$ is projective since the evaluation functor $ev_n: \PP^d_q\to \V$, $F\mapsto F(n)$ is exact.
\end{proof}

For an algebra $A$ we let $\Mod{A}$ denote the category of left $A$-modules that are finite projective over $\kk$.

\begin{theorem}
\label{Rep_thm}

If $n\geq d$ then $\Gamma^{d,n}_q$ is a projective generator of $\PP_q^d$.  Hence the evaluation functor $\PP_q^d \to \Mod{S_q(n,n;d)}$ is an equivalence of categories.

\end{theorem}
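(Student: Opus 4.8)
The plan is to establish two things: that $\Gamma_q^{d,n}$ is a \emph{generator} of $\PP_q^d$ when $n \geq d$, and then to deduce the equivalence with $\Mod{S_q(n,n;d)}$ via a standard Morita-type argument. Projectivity was already proved in Proposition \ref{Yonedalemma}, so the only genuinely new input is the generation statement, and this is where the finite-generation machinery of Section \ref{sec-mainthm} is used.

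First I would verify that $\Gamma_q^{d,n}$ is a projective generator. Recall from Proposition \ref{Yonedalemma} that $\Hom_{\PP_q^d}(\Gamma_q^{d,n}, F) \cong F(n)$ naturally. To show $\Gamma_q^{d,n}$ generates, it suffices to show that for any nonzero $F \in \PP_q^d$ there is a nonzero map $\Gamma_q^{d,n} \to F$; equivalently, that $F(n) \neq 0$ whenever $F \neq 0$. This is exactly what Proposition \ref{prop-gen} gives when $n \geq d$: if $F(n) = 0$ then the surjection $F'_{n,m} : S_q(n,m;d) \otimes F(n) \to F(m)$ forces $F(m) = 0$ for all $m$, so $F = 0$. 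More precisely, to conclude that $\Gamma_q^{d,n}$ is a \emph{projective generator} in the strong sense needed for the equivalence, I would note that every object $F$ admits a surjection from a (possibly infinite) direct sum of copies of $\Gamma_q^{d,n}$: choosing generators $v_1, \dots, v_r$ of the $\kk$-module $F(n)$, the induced map $\bigoplus_{i=1}^r \Gamma_q^{d,n} \to F$ (adjoint to the $v_i$ under Yoneda) is surjective on evaluation at every $m$ precisely because $F$ is $n$-generated by Proposition \ref{prop-gen}. Since $F(n)$ is finite projective over $\kk$, finitely many copies suffice.

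Next I would identify $\End_{\PP_q^d}(\Gamma_q^{d,n})$ with $S_q(n,n;d)$ (or its opposite, depending on conventions). By Proposition \ref{Yonedalemma}, $\End_{\PP_q^d}(\Gamma_q^{d,n}) \cong \Gamma_q^{d,n}(n) = \Hom_{\cB_d}(V_n^{\otimes d}, V_n^{\otimes d}) \cong S_q(n,n;d)$ by Proposition \ref{Natural_Iso}, and one checks this is an algebra isomorphism using Proposition \ref{Iso_Com} — composition of natural transformations corresponds to the multiplication $m_{*,*,*}$ on the rectangular $q$-Schur algebras, i.e. the $q$-Schur algebra multiplication. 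The evaluation functor $F \mapsto F(n) = \Hom_{\PP_q^d}(\Gamma_q^{d,n}, F)$ then naturally lands in $\Mod{S_q(n,n;d)}$; it is exact because $ev_n$ is exact (as noted in Proposition \ref{Yonedalemma}), and it takes values in finite projective $\kk$-modules by definition of $\PP_q^d$.

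Finally, the equivalence follows from the standard projective-generator argument: for a projective generator $P$ of an exact (indeed abelian-enough) category with $\End(P) = A$, the functor $\Hom(P,-)$ is an equivalence onto an appropriate module category, with quasi-inverse $M \mapsto P \otimes_A M$. Here one takes $M \mapsto \Gamma_q^{d,n} \otimes_{S_q(n,n;d)} M$ as the candidate inverse; fullness and faithfulness on the subcategory generated by $\Gamma_q^{d,n}$ are formal, and essential surjectivity uses that every $F$ is a cokernel of a map between finite direct sums of copies of $\Gamma_q^{d,n}$, which we established above, together with right-exactness of $\Gamma_q^{d,n} \otimes_{S_q(n,n;d)} -$. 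The main obstacle, and the only place real content enters, is the generation step — ensuring that $\Gamma_q^{d,n}$ genuinely surjects onto arbitrary $F$ with a \emph{finite} presentation; this rests entirely on Proposition \ref{prop-gen}, whose proof in turn depended on the Friedlander--Suslin-type identity (Lemma \ref{Lem-FS}) and the combinatorics of the idempotents $\phi_{\underline{i}}^d$. Everything else is bookkeeping with Yoneda and the compatibility of composition established in Proposition \ref{Iso_Com}.
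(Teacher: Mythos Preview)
Your proposal is correct and follows essentially the same route as the paper: projectivity from Proposition~\ref{Yonedalemma}, the generator property from Proposition~\ref{prop-gen} via the surjection $F'_{n,-}\colon \Gamma_q^{d,n}\otimes F(n)\to F$, and then the equivalence. The paper's proof is very terse (it simply writes ``Hence the equivalence follows'' after establishing projective generator), whereas you spell out the identification $\End_{\PP_q^d}(\Gamma_q^{d,n})\cong S_q(n,n;d)$ and the Morita-type argument explicitly; this is all consistent with the paper's intent and fills in what it leaves implicit.
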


\begin{proof}
By Proposition \ref{Yonedalemma} we have that $\Gamma^{d,n}_q$ is projective.  To see that it's a generator when $n\geq d$ it suffices to show that $F_{n,-}':\Gamma^{d,n}_q \otimes F(n) \to F$ is surjective. This follows immediately from Proposition \ref{prop-gen}, which gives us that for every $m$ the map  $F_{n,m}'$ is surjective.  Hence the equivalence follows. 

\end{proof}

Let $\CoMod{A_q(n,n)_d}$ be the category of  right comodules over the coalgebra $A_q(n,n)_d$ that are finite projective over $\kk$.  It is clear that the category $\Mod{S_q(n,n;d)}$ is equivalent to $\CoMod{A_q(n,n)_d}$.  Therefore Theorem \ref{Rep_thm} immediately implies that 
the evaluation functor $\PP_q^d \to \CoMod{A_q(n,n)_d}$ is an equivalence if $n\geq d$. 

\begin{remark}
\label{rem:genthm}
Theorem \ref{Rep_thm} can be stated in slightly greater generality, where $\PP_q^d$ is replaced by the category of $\kk$-linear functors from $\Gamma_q^d$ to all projective $\kk$-modules, and $\Mod{S_q(n,n;d)}$ is replaced by $\mathsf{Mod}(S_q(n,n;d))$, the category of all $S_q(n,n;d)$-modules that are projective over $\kk$.  The same proofs carry over to this setting.  
\end{remark}

We now state a series of corollaries of Theorem \ref{Rep_thm}.  The first is well-known (cf. \cite[p.26]{BDK}) and it is an immediate consequence.
\begin{corollary}
Let $d\geq 0$ be an integer.  
For any two integers $m,n\geq d$ the $q$-Schur algebras $S_q(n,n;d)$ and $S_q(m,m;d)$ are Morita equivalent.
\end{corollary}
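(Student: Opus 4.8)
The plan is to deduce this from Theorem \ref{Rep_thm}. By that theorem, whenever $n \geq d$ the evaluation functor gives an equivalence $\PP_q^d \cong \Mod{S_q(n,n;d)}$. Applying this with $n$ and with $m$ (both assumed $\geq d$), I obtain equivalences of categories
$$
\Mod{S_q(n,n;d)} \cong \PP_q^d \cong \Mod{S_q(m,m;d)}.
$$
So at the level of the module categories $\Mod{-}$ (finite projective over $\kk$) the two $q$-Schur algebras already have equivalent module categories.

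The remaining point is to upgrade this to genuine Morita equivalence, i.e.\ an equivalence between the categories of \emph{all} left modules. Here I would invoke Remark \ref{rem:genthm}: the same proofs show that the evaluation functor gives an equivalence between the category of $\kk$-linear functors $\Gamma_q^d\V \to (\text{all projective }\kk\text{-modules})$ and $\mathsf{Mod}(S_q(n,n;d))$, the category of all $S_q(n,n;d)$-modules projective over $\kk$. Composing the two such equivalences (for $n$ and for $m$) yields an equivalence $\mathsf{Mod}(S_q(n,n;d)) \cong \mathsf{Mod}(S_q(m,m;d))$ of the categories of $\kk$-projective modules. Since $S_q(n,n;d)$ and $S_q(m,m;d)$ are themselves finite free over $\kk$ (this follows from Lemma \ref{Basis_quantum_matrix}, as noted after it), every module is a quotient of a free module, which is $\kk$-projective, and a standard argument shows that an equivalence respecting the $\kk$-projective modules extends to an equivalence of the full module categories; alternatively one observes that the equivalence sends the regular module $S_q(n,n;d)$ to a finitely generated projective generator of $\mathsf{Mod}(S_q(m,m;d))$, which is precisely the data of a Morita equivalence.

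I do not expect any serious obstacle here: the corollary is essentially a formal consequence of Theorem \ref{Rep_thm} together with the ``both orientations'' symmetry of the statement. The only mild subtlety is the passage from the $\kk$-finite-projective module categories to the full module categories, which is why Remark \ref{rem:genthm} is the relevant tool; once that is in hand the proof is immediate. Concretely, the shortest route is: \emph{by Theorem \ref{Rep_thm}, $\Mod{S_q(n,n;d)} \cong \PP_q^d \cong \Mod{S_q(m,m;d)}$ for $m,n \geq d$, and since these algebras are finitely generated free over $\kk$ this equivalence (extended as in Remark \ref{rem:genthm}) exhibits $S_q(n,n;d)$ and $S_q(m,m;d)$ as Morita equivalent.}
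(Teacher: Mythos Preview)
Your proposal is correct and follows exactly the approach the paper intends: the paper gives no proof at all, merely stating that the corollary is well-known and ``an immediate consequence'' of Theorem~\ref{Rep_thm}. Your discussion of the passage from $\Mod{-}$ to full module categories via Remark~\ref{rem:genthm} is more careful than anything the paper records, but the underlying idea is identical.
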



To state another corollary, we first note that the functor $\Gamma_q^{d,n}$ has a natural decomposition
\begin{equation}
\label{eq-decomp}
\Gamma_q^{d,n}\cong\bigoplus_{d_1+\cdots+d_n=d}\Gamma_q^{d_1}\otimes\cdots\otimes\Gamma_q^{d_n}.
\end{equation}
Indeed, by Frobenius reciprocity and Remark \ref{rem:Hecke} we have 
\begin{align*}
\Gamma_q^{d_1}(m)\otimes\cdots\otimes\Gamma_q^{d_n}(m) &\cong \Hom_{\cH_{d_1}\otimes\cdots\otimes\cH_{d_n}}(\chi_+\otimes\cdots\otimes\chi_+,V_m^{\otimes d}) \\
&\cong \Hom_{\cH_d}(\Ind_{\cH_{d_1}\otimes\cdots\otimes\cH_{d_n}}^{\cH_d}(\chi_+\otimes\cdots\otimes\chi_+),V_m^{\otimes d})
\end{align*}
and so (\ref{eq-decomp}) follows from the  isomorphism which is due to Dipper-James (\cite[Proposition 11.5]{T})
\begin{equation}
\label{eq-decomp2}
V_n^{\otimes d} \cong \bigoplus_{d_1+\cdots+d_n=d}\Ind_{\cH_{d_1}\otimes\cdots\otimes\cH_{d_n}}^{\cH_d}(\chi_+\otimes\cdots\otimes\chi_+).
\end{equation}
This isomorphism can be made explicit by mapping $e_1^{\otimes d_1}\cdots e_n^{\otimes d_n}$ to $1\in \Ind_{\cH_{d_1}\otimes\cdots\otimes\cH_{d_n}}^{\cH_d}(\chi_+\otimes\cdots\otimes\chi_+)$, and extending by $\cH_d$-linearity.

By Proposition \ref{Natural_Iso},  (\ref{eq-decomp2}) induces a partition of the unit of  $ S_q(n,n;d)$ into orthogonal idempotents: $1=\sum 1_{\vec{d}}$, where the sum ranges over all $\vec{d}=(d_1,...,d_n)$ such that $d_1+\cdots+d_n=d$.  For $M\in\Mod{S_q(n,n;d)}$ there is a corresponding decomposition into weight spaces
$$
M=\bigoplus M_{\vec{d}},
$$
where $M_{\vec{d}}=1_{\vec{d}}M$.

\begin{corollary}
\label{Weight_Rep}
Let $M\in\PP_q^d$, $n\geq0$ and $d_1,...,d_n\geq0$ such that $d_1+\cdots+d_n=d$.  Then   under the isomorphism $\Hom_{\PP_q}(\Gamma_q^{d,n},F)\cong F(n)$ we have
$$
\Hom_{\PP_q}(\Gamma_q^{d_1}\otimes\cdots\otimes\Gamma_q^{d_n},F)\cong F(n)_{(d_1,...,d_n)}.
$$
\end{corollary}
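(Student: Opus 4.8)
The plan is to obtain the statement by feeding the direct-sum decomposition (\ref{eq-decomp}) into the representability isomorphism of Proposition~\ref{Yonedalemma}. Since $\Hom_{\PP_q^d}(-,F)$ is additive and (\ref{eq-decomp}) is an isomorphism inside $\PP_q^d$, applying this functor together with Proposition~\ref{Yonedalemma} yields a natural isomorphism
\[
F(n)\;\cong\;\Hom_{\PP_q^d}(\Gamma_q^{d,n},F)\;\cong\;\bigoplus_{d_1+\cdots+d_n=d}\Hom_{\PP_q^d}(\Gamma_q^{d_1}\otimes\cdots\otimes\Gamma_q^{d_n},F),
\]
so that all that remains is to match the $\vec d$-summand on the right with the weight space $F(n)_{\vec d}$.

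First I would make explicit the idempotent cutting out the summand $\Gamma_q^{d_1}\otimes\cdots\otimes\Gamma_q^{d_n}$. Composing a chosen isomorphism (\ref{eq-decomp}) with the canonical inclusion and projection of a finite direct sum produces a complete family of orthogonal idempotents $e_{\vec d}\in\End_{\PP_q^d}(\Gamma_q^{d,n})$ whose image is (canonically identified with) $\Gamma_q^{d_1}\otimes\cdots\otimes\Gamma_q^{d_n}$. I would then observe that, evaluated at each $m$, the isomorphism (\ref{eq-decomp}) is exactly what one gets by applying $\Hom_{\cH_d}(-,V_m^{\otimes d})$ to the Dipper--James decomposition (\ref{eq-decomp2}) of $V_n^{\otimes d}$ (using Remark~\ref{rem:Hecke} to pass to $\cH_d$). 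Consequently $e_{\vec d}$ acts on $\Gamma_q^{d,n}(m)=\Hom_{\cH_d}(V_n^{\otimes d},V_m^{\otimes d})$ by $x\mapsto x\circ 1_{\vec d}$, where $1_{\vec d}\in\End_{\cH_d}(V_n^{\otimes d})$ is the idempotent projecting onto the $\vec d$-summand in (\ref{eq-decomp2}); associativity of composition makes this assignment natural in $m$, so it genuinely is a morphism in $\PP_q^d$, and in particular $e_{\vec d}(n)(1_n)=1_{\vec d}$.

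Next I would check that the representability isomorphism $\Hom_{\PP_q^d}(\Gamma_q^{d,n},F)\longisomap F(n)$, $h\mapsto h(n)(1_n)$, intertwines precomposition by $e_{\vec d}$ with the action of $F_{n,n}(1_{\vec d})$ on $F(n)$. This is a one-line naturality computation: $(h\circ e_{\vec d})(n)(1_n)=h(n)\bigl(e_{\vec d}(n)(1_n)\bigr)=h(n)(1_{\vec d})$, and applying the naturality square of $h$ to the morphism $1_{\vec d}\in\Hom_{\Gamma_q^d\V}(n,n)$ rewrites this as $F_{n,n}(1_{\vec d})\bigl(h(n)(1_n)\bigr)$. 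Since $h\mapsto h(n)(1_n)$ is bijective, the image of the $\vec d$-summand $\Hom_{\PP_q^d}(\Gamma_q^{d_1}\otimes\cdots\otimes\Gamma_q^{d_n},F)$ --- i.e.\ the image of precomposition by $e_{\vec d}$ --- is precisely $F_{n,n}(1_{\vec d})\bigl(F(n)\bigr)=F(n)_{(d_1,\dots,d_n)}$, which is the assertion.

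The only step requiring genuine care --- the main obstacle --- is the middle one: verifying that the abstract idempotent $e_{\vec d}$ attached to (\ref{eq-decomp}) is the concrete operator ``$x\mapsto x\circ 1_{\vec d}$'', hence that under $\End_{\PP_q^d}(\Gamma_q^{d,n})\cong\Gamma_q^{d,n}(n)\cong S_q(n,n;d)$ (Propositions~\ref{Yonedalemma} and~\ref{Natural_Iso}) it corresponds to the weight idempotent $1_{\vec d}$. Keeping the left/right module conventions straight here --- there is a harmless opposite-algebra flip between composition of natural transformations and composition of operators, which does not affect idempotents --- is the one place where one must be attentive; once this identification is pinned down, the corollary follows formally from the naturality argument above.
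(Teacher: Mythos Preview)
Your proof is correct and follows essentially the same route as the paper's: both combine the decomposition (\ref{eq-decomp}) with the representability isomorphism of Proposition~\ref{Yonedalemma}, and both hinge on the same naturality computation showing that evaluation at the element $1_{\vec d}\in\Gamma_q^{d,n}(n)$ lands in the weight space $F(n)_{\vec d}$. The only cosmetic difference is that the paper packages this via the canonical element $\iota_{\vec d}$ (which is exactly your $1_{\vec d}$ under the identification you spell out) and a commutative square, whereas you phrase it in terms of the idempotent $e_{\vec d}$ and its intertwining with $F_{n,n}(1_{\vec d})$; your version is slightly more explicit about why the summand idempotent really is ``precompose with $1_{\vec d}$''.
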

\begin{proof}
There is a canonical element $\iota_{(d_1,...,d_n)}\in \Gamma_q^{d_1}(n)\otimes\cdots\otimes\Gamma_q^{d_n}(n)$ corresponding to the inclusion 
$$
\Ind_{\cH_{d_1}\otimes\cdots\otimes\cH_{d_n}}^{\cH_d}(\chi_+\otimes\cdots\otimes\chi_+) \hookrightarrow V_n^{\otimes d}
$$
under (\ref{eq-decomp2}).  The map $\Hom_{\PP_q}(\Gamma_q^{d_1}\otimes\cdots\otimes\Gamma_q^{d_n},F)\to F(n)_{d_1,...,d_n}$ is given by
$f \mapsto f(n)(\iota_{(d_1,...,d_n)})$.  This map lands in the $(d_1,...,d_n)$ weight space since $f$ is a natural transformation.  More precisely, under our identifications we have the following commutative diagram:
$$
\xymatrix{
\Hom_{\cH_d}(\Ind_{\cH_{d_1}\otimes\cdots\otimes\cH_{d_n}}^{\cH_d}(\chi_+\otimes\cdots\otimes\chi_+),V_n^{\otimes d}) \ar[rr]^>>>>>>>>>>{f(n)} \ar[d]^{1_{(d_1,...,d_n)}} && F(n) \ar[d]^{1_{(d_1,...,d_n)}} \\
\Hom_{\cH_d}(\Ind_{\cH_{d_1}\otimes\cdots\otimes\cH_{d_n}}^{\cH_d}(\chi_+\otimes\cdots\otimes\chi_+),V_n^{\otimes d}) \ar[rr]^>>>>>>>>>>{f(n)} && F(n)
}
$$
which implies that $f(n)(\iota_{(d_1,...,d_n)})=1_{(d_1,...,d_n)}f(n)(\iota_{(d_1,...,d_n)})$.
Consider the  diagram
$$
\xymatrix{
\Hom_{\PP_q}(\Gamma_q^{d_1}\otimes\cdots\otimes\Gamma_q^{d_n},F) \ar[r] \ar[d] & F(n)_{d_1,...,d_n} \ar[d] 
\\
\Hom_{\PP_q}(\Gamma_q^{d,n},F)\ar[r] & F(n)
}
$$
This diagram clearly commutes.  Since both vertical maps are inclusions and the bottom map is an isomorphism by Theorem \ref{Rep_thm}, the top map is an isomorphism.
\end{proof}

The final corollary recovers a basic result relating the Hecke algebra and the $q$-Schur algebra.  Recall from Section \ref{sec:exs} that for any $w\in\fS_d$ we have a morphism of quantum polynomial functors $T_w:\bigotimes^d\to\bigotimes^d$.  Since the $T_i$ satisfy the Hecke relation this induces a map $\cH_d \to \Hom_{\PP_q}(\otimes^d,\otimes^d)$.  

\begin{corollary}
\label{Double_centrallizer}
The map $\cH_d\to  \Hom_{\PP_q}(\bigotimes^d,\bigotimes^d)$ is an isomorphism.  Hence for any $n\geq d$,  the map $\cH_d\to  \Hom_{S_q(n,n;d)}(V_n^{\otimes d}, V_n^{\otimes d})$ is an isomorphism of algebras. 
\end{corollary}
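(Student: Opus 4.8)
Fix $n \geq d$. The plan is to exhibit $\bigotimes^d$ as an explicit direct summand of the representable functor $\Gamma_q^{d,n}$, read off $\Hom_{\PP_q}(\bigotimes^d,\bigotimes^d)$ from the representability result (Proposition \ref{Yonedalemma}), and then deduce the statement about the $q$-Schur algebra from Theorem \ref{Rep_thm}. Since $\Gamma_q^1 = I$ and $\Gamma_q^0$ is the monoidal unit of $\PP_q$, the summand of the decomposition (\ref{eq-decomp}) indexed by the composition $\vec d_0 = (1,\dots,1,0,\dots,0)$ with $d$ ones is precisely $I^{\otimes d} = \bigotimes^d$; write $\iota\colon \bigotimes^d \hookrightarrow \Gamma_q^{d,n}$ and $\pi\colon \Gamma_q^{d,n}\twoheadrightarrow\bigotimes^d$ for the structure maps. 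On the target side, by the Dipper--James isomorphism (\ref{eq-decomp2}) the $\vec d_0$-summand of $V_n^{\otimes d}$ is, because $n\geq d$, the regular right $\cH_d$-module $\Ind^{\cH_d}_{\cH_1\otimes\cdots\otimes\cH_1}(\chi_+\otimes\cdots\otimes\chi_+)\cong\cH_d$; let $\iota_0\colon\cH_d\hookrightarrow V_n^{\otimes d}$ be the corresponding $\cB_d$-equivariant inclusion and put $u:=\iota_0(1) = e_1\otimes\cdots\otimes e_d$.

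By Proposition \ref{Yonedalemma}, $\Hom_{\PP_q}(\Gamma_q^{d,n},\bigotimes^d)\cong\bigotimes^d(n)=V_n^{\otimes d}$ via $g\mapsto g(n)(1_n)$. Restricting this along the injection $\Hom_{\PP_q}(\bigotimes^d,\bigotimes^d)\hookrightarrow\Hom_{\PP_q}(\Gamma_q^{d,n},\bigotimes^d)$, $\eta\mapsto\eta\circ\pi$ (whose image is the set of $g$ with $g\circ\iota\pi = g$), yields an isomorphism of $\Hom_{\PP_q}(\bigotimes^d,\bigotimes^d)$ onto the summand $\iota_0(\cH_d)\subseteq V_n^{\otimes d}$; a routine tracing of Proposition \ref{Yonedalemma} and of the identification of the $\vec d_0$-summand (which in particular gives $\pi(n)(1_n)=u$) shows this isomorphism is $\eta\mapsto\eta(n)(u)$. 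On the other hand, the natural transformation $T_h\colon\bigotimes^d\to\bigotimes^d$ attached to $h\in\cH_d$ is $k\mapsto\rho_{d,k}(h)$, so $T_h(n)(u)=\rho_{d,n}(h)(\iota_0(1))=\iota_0(1\cdot h)=\iota_0(h)$, using that $\iota_0$ is $\cH_d$-linear. Hence the composite $\cH_d\to\Hom_{\PP_q}(\bigotimes^d,\bigotimes^d)\to\iota_0(\cH_d)$ of the map in the statement with our isomorphism is the bijection $\iota_0$, so the map $\cH_d\to\Hom_{\PP_q}(\bigotimes^d,\bigotimes^d)$ is bijective, which is the first assertion. (A word on conventions, which is where the only real care is needed: $\rho_{d,V}$ is a \emph{right} representation and the Yoneda isomorphism of Proposition \ref{Yonedalemma} is contravariant on endomorphism rings, so the maps above are a priori only algebra \emph{anti}-isomorphisms; this is harmless since $\cH_d\cong\cH_d^{\op}$ via $T_i\mapsto T_i$, the relations (\ref{eq:hecke}) being invariant under reversing words.)

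For the second assertion, Theorem \ref{Rep_thm} gives, for $n\geq d$, an equivalence $\PP_q^d\to\Mod{S_q(n,n;d)}$ (evaluation at $n$) carrying $\bigotimes^d$ to $V_n^{\otimes d}$ with its tautological $S_q(n,n;d)$-module structure and the endomorphism $T_w$ to $\rho_{d,n}(T_w)$. The operator $\rho_{d,n}(T_w)$ is indeed $S_q(n,n;d)$-linear because, by Proposition \ref{Natural_Iso}, $S_q(n,n;d)$ acts on $V_n^{\otimes d}$ through the commutant of the braid-group action. Thus the equivalence induces an algebra isomorphism $\Hom_{\PP_q}(\bigotimes^d,\bigotimes^d)\cong\Hom_{S_q(n,n;d)}(V_n^{\otimes d},V_n^{\otimes d})$ intertwining the two maps out of $\cH_d$, and the second statement follows from the first. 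Beyond the convention bookkeeping already noted, the remaining points — identifying the summand $\bigotimes^d$ of $\Gamma_q^{d,n}$ and checking $\pi(n)(1_n)=u$ — are routine; I expect the convention bookkeeping to be the only genuine obstacle.
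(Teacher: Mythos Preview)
Your proof is correct and follows essentially the same route as the paper. The paper first packages your direct-summand argument as Corollary~\ref{Weight_Rep}, which says $\Hom_{\PP_q}(\Gamma_q^{d_1}\otimes\cdots\otimes\Gamma_q^{d_n},F)\cong F(n)_{(d_1,\dots,d_n)}$, and then applies it with $(d_1,\dots,d_n)=(1,\dots,1)$ and $n=d$ to obtain $\Hom_{\PP_q}(\bigotimes^d,\bigotimes^d)\cong (V_d^{\otimes d})_{1,\dots,1}$; your argument is the same Yoneda-plus-decomposition reasoning with that corollary inlined (and working with a general $n\geq d$), and your identification of the summand $\iota_0(\cH_d)$ is exactly the weight space $(V_n^{\otimes d})_{(1,\dots,1,0,\dots,0)}$ the paper uses. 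Your explicit bookkeeping on the anti-homomorphism issue and the use of $\cH_d\cong\cH_d^{\op}$ is extra care the paper does not make explicit.
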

\begin{proof}
By Corollary \ref{Weight_Rep}, we have $\Hom_{\PP_d}(\bigotimes^d,\bigotimes^d)\simeq (V_d^{\otimes d})_{1,...,1}.$   The space $(V_d^{\otimes d})_{1,...,1}$ has of basis $e_{i_1}\otimes e_{i_2}\otimes \cdots \otimes e_{i_d}$, where $i_1,i_2,\cdots,i_d$ are all distinct. 
Under this isomorphism,  the map $\cH_d\to   (V_d^{\otimes d})_{1,...,1}$ is given by 
$T_w\mapsto   e_{w(1)}\otimes e_{w(2)}\otimes \cdots \otimes e_{w(d)}$, for any $w\in \fS_d$.  It is easy to see that this is a bijection. 

The second statement now follows from the first one using  Theorem \ref{Rep_thm}.
\end{proof}

Corollary \ref{Double_centrallizer} together with Proposition \ref{Natural_Iso}  recovers the double centralizer property between Hecke algebra and $q$-Schur algebra in the stable range when $n\geq d$.

\section{Braiding on $\PP_q$}
\label{sec-braiding}

In this section we will use Theorem \ref{Rep_thm} to define a braiding on the category of quantum polynomial functors, thus showing that $\PP_q$ is a braided monoidal category. 

Observe first that if $F\in\PP_q^d$ then, by Proposition \ref{prop-char}, the map $F''_{n,n}$ induces on $F(n)$ the structure of an $A_q(n,n)_d$-comodule:
$$
F_{n,n}'':F(n)\to F(n) \otimes A_q(n,n;d).
$$
We will use the Sweedler notation to denote this coaction: 
$$v\in F(n) \mapsto \sum v_0\otimes v_1\in F(n)\otimes A_q(n,n;d)$$  

For a coalgebra $C$ we let $\CoMod{C}$ be the category of  right $C$-comodules that are finite projective over $\kk$.
Now suppose we are given 
$$V\in \CoMod{A_q(n,n)_d}\quad  \text{ and }\quad W\in \CoMod{A_q(n,n)_e}.$$
 Then  $V\otimes W \in \CoMod{A_q(n,n)_{d+e}}$ and there is a well-known morphism induced from the R-matrix
$$
R_{V,W}:V\otimes W \to W\otimes V,
$$  
which is an isomorphism of $A_q(n,n)_{d+e}$-comodules.  We recall the construction of $R_{V,W}$ following Takeuchi \cite[\S 12]{T}.  

Define $\sigma: \Hom(V_n,V_n)\times \Hom(V_n,V_n)\ \to \kk$ by
\[\sigma(x_{ii},x_{jj}) = \left\{
  \begin{array}{lr}
    1  & \text{if }  i<j\\
    q & \text{if } i=j\\
   1 & \text{ if } i>j 
  \end{array}
\right.
\]
and in addition $\sigma(x_{ij},x_{ji})=q-q^{-1}$ if $i<j$ and $\sigma(x_{ij},x_{kl})=0$ otherwise.  

Recall that $\Hom(V_n,V_n)=A_q(n,n)_1\subset A_q(n,n)$, 
so we can extend $\sigma$ to a braiding on $A_q(n,n)$ \cite[Proposition 12.9]{T}. 
  This means that it is an invertible bilinear form on $A_q(n,n)$ such that for all $x,y,z\in A_q(n,n)$:
\begin{align*}
\sigma(xy,z) &=\sum\sigma(x,z_1)\sigma(y,z_2) \\
\sigma(x,yz) &=\sum\sigma(x_1,z)\sigma(x_2,y) \\
\sigma(x_1,y_1)x_2y_2 &=\sum y_1x_1\sigma(x_2,y_2)
\end{align*}
Here we again we use the Sweedler notation for the coproduct $\Delta:A_q(n,n)\to A_q(n,n)\otimes A_q(n,n)$ so  $\Delta(x)=\sum x_1\otimes x_2$.  
The R-matrix is given by $$R_{V,W}(v\otimes w)=\sum \sigma(v_1,w_1)w_0\otimes v_0.$$ 
Note that $R_{V_n,V_n}=R_n$, where $R_n$ is defined in Section \ref{sec-qmatrices}.

\begin{lemma}
\label{prop-im}
Let $d,e\geq0$.  Then there exists $\kappa \in \cH_{d+e}$ such that for all $m\geq1$ 
$$
R_{V_m^d,V_m^e}=\rho_{d+e,m}(\kappa).
$$
In particular $\kappa=T_{w_{d,e}}$ where $w_{d,e}\in\mathfrak{S}_{d+e}$ is given by
\[w(i) = \left\{
  \begin{array}{lr}
    i+e  & \text{if }  1 \leq i \leq d\\
    i-d & \text{if }  d < i 
  \end{array}
\right.
\]
\end{lemma}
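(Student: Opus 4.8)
The plan is to compute $R_{V_m^{\otimes d},V_m^{\otimes e}}$ by exploiting the fact that, for each fixed $m$, the maps $R_{V,W}$ are the components of the braiding making $\CoMod{A_q(m,m)}$ a braided monoidal category (cf.\ \cite[\S 12]{T}); in particular they are natural and satisfy the hexagon identities. As a preliminary step I would record the two identities
$$R_{U\otimes V,\,W}=(R_{U,W}\otimes 1_V)\circ(1_U\otimes R_{V,W}),\qquad R_{U,\,V\otimes W}=(1_V\otimes R_{U,W})\circ(R_{U,V}\otimes 1_W),$$
valid for all $A_q(m,m)$-comodules $U,V,W$; these are a short direct consequence of the defining properties $\sigma(xy,z)=\sum\sigma(x,z_1)\sigma(y,z_2)$ and $\sigma(x,yz)=\sum\sigma(x_1,z)\sigma(x_2,y)$ together with coassociativity of the coactions. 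Note one genuinely needs the braiding on all of $\CoMod{A_q(m,m)}$, not merely on a single graded piece, which is available because $\sigma$ is an $R$-form of the whole bialgebra $A_q(m,m)$.

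Next I would unwind these identities. Since $V_m^{\otimes d}$ and $V_m^{\otimes e}$ are tensor powers of $V_m=V_m^{\otimes 1}$, and $R_{V_m,V_m}=R_m=\rho_{2,m}(T_1)$, an easy double induction --- first expressing $R_{V_m^{\otimes d},V_m}$ as a composite of $d$ copies of $R_m$ in consecutive tensor slots, then building up the right-hand argument one factor at a time --- yields
$$R_{V_m^{\otimes d},V_m^{\otimes e}}=\rho_{d+e,m}(\kappa),$$
where $\kappa$ is a product of $de$ of the standard generators $T_i$. The key observation is that the recursion producing this word $\kappa$ is purely diagrammatic: at each step one merely replaces a factor $R_m=\rho_{2,m}(T_1)$ by the generator $T_i\in\cH_{d+e}$ occupying the matching pair of tensor slots. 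Hence $\kappa\in\cH_{d+e}$ --- it lies in the Hecke algebra, since we use the standard Yang--Baxter space (cf.\ Remark \ref{rem:Hecke}) --- is visibly independent of $m$, so the displayed identity holds simultaneously for every $m\ge 1$.

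Finally I would identify $\kappa$ with $T_{w_{d,e}}$. It suffices to check that the word $\kappa$ obtained above is a reduced expression for $w_{d,e}\in\fS_{d+e}$, since $T_w$ is defined via any reduced expression. This requires two things: that $\kappa$ has exactly $de$ letters, which is immediate from the recursion and matches $\ell(w_{d,e})=de$; and that replacing each $T_i$ by $s_i$ --- equivalently, specializing $q=1$, so that $R_m$ becomes the flip and $R_{V_m^{\otimes d},V_m^{\otimes e}}$ the block transposition of tensor factors --- gives precisely the permutation $w_{d,e}$. I expect this last point to be the only fiddly part of the argument: one must track, through the nested hexagon recursion, exactly which pair of slots each copy of $R_m$ occupies, in order to pin down $\kappa$ and its image in $\fS_{d+e}$. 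This is most cleanly organized by observing that the recursion reproduces a standard reduced word for the block transposition, obtained by moving the $e$ factors of $V_m^{\otimes e}$ to the front, one at a time, past all $d$ factors of $V_m^{\otimes d}$; everything else in the proof is formal.
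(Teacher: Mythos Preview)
Your proposal is correct and follows essentially the same route as the paper: both arguments use the hexagon identities for the braiding on $\CoMod{A_q(m,m)}$ together with an induction to express $R_{V_m^{\otimes d},V_m^{\otimes e}}$ as $\rho_{d+e,m}$ applied to a word in the $T_i$, and then identify that word with $T_{w_{d,e}}$. The only minor difference is in the final identification: the paper tracks the permutations explicitly through the induction and checks that lengths add, whereas you count that the word has $de=\ell(w_{d,e})$ letters and specialize to $q=1$ to see it represents $w_{d,e}$ --- a slightly slicker bookkeeping device, but the same argument in substance.
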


\begin{proof}
For $$U\in \CoMod{A_q(m,m)_d}, V\in \CoMod{A_q(m,m)_e}, \text{ and } W\in \CoMod{A_q(m,m)_f}$$ the following two diagrams commute:
\begin{equation}
\label{eq-R2}
\xymatrix{
U\otimes V \otimes W \ar[rr]^{R_{U,V\otimes W}} \ar[dr]_{R_{U,V}\otimes 1_W} && V\otimes W\otimes U \\
& V\otimes U \otimes W \ar[ru]_{1_V\otimes R_{U,W}}
}
\end{equation}
and
\begin{equation}
\label{eq-R1}
\xymatrix{
U\otimes V \otimes W \ar[rr]^{R_{U\otimes V,W}} \ar[dr]_{1_U\otimes R_{V,W}} && W\otimes U\otimes V \\
& U\otimes W \otimes V \ar[ru]_{R_{U,W}\otimes 1_V}
}
\end{equation}
These are well-known properties of the R-matrix, and follow from the fact that $\sigma$ is a braiding.  

We will use these diagrams to prove the lemma by induction on $d+e$.  If $d+e=2$ then the statement is tautological.  If $d+e>2$ then suppose first $e\geq 2$.  By (\ref{eq-R2}) and the inductive hypothesis we have:
\begin{align*}
R_{V_m^{\tn d},V_m^{\tn e}} = R_{V_m^{\tn d},V_m^{\tn e-1}\otimes V_m} &=(1_{V_m^{\tn e-1}}\otimes R_{V_m^{\tn d},V_m})\circ (R_{V_m^{\tn d},V_m^{\tn e-1}}\otimes 1_{V_m}) \\
&= (1_{V_m^{\tn e-1}}\otimes \rho_{d+1,m}(T_{w_{d,1}})\circ (\rho_{d+e-1,m}(T_{w_{d,e-1}})\otimes 1_{V_m}) \\
&= \rho_{d+e,m}(T_{w_1})\circ \rho_{d+e,m}(T_{w_2}) \\
&=\rho_{d+e,m}(T_{w_1}T_{w_2})
\end{align*}  
where $w_1,w_2\in\mathfrak{S}_{d+e}$ are given by
\[w_1(i) = \left\{
  \begin{array}{lr}
    i & \text{if } 1\leq i\leq e-1 \\
    i+1  & \text{if }  e \leq i \leq e+d-1\\
    e & \text{if }  i=e+d
  \end{array}
\right.
\]
and
\[w_2(i) = \left\{
  \begin{array}{lr}
    e-1+i & \text{if } 1\leq i\leq d \\
    i-d  & \text{if }  d+1 \leq i \leq e+d-1\\
    e+d & \text{if }  i=e+d
  \end{array}
\right.
\]
Since $w_1w_2=w_{d,e}$ and $\ell(w_1)+\ell(w_2)=\ell(w_{d,e})$ (where $\ell$ is the usual length function), we have that $T_{w_1}T_{w_2}=T_{w_{d,e}}$ and the result follows.  

In the case that $e< 2$ then $d\geq 2$ and a similar induction applies, where one uses (\ref{eq-R1}) instead of (\ref{eq-R2}) .  
\end{proof}

Now suppose $F\in\PP_q^d$ and $G\in \PP_q^e$.  Define 
$$
R_{F,G}:F\otimes G \to G\otimes F
$$ 
by $R_{F,G}(m)=R_{F(m),G(m)}$.

\begin{theorem} 
\label{Braiding}
$R$ induces a braiding on the category $\PP_q$.  In other words, let $F\in\PP_q^d$
and $G\in \PP_q^e$.  Then $R_{F,G} \in \Hom_{\PP_q}(F\otimes G,G\otimes
F)$ and moreover $R_{F,G}$ is an isomorphism. 
\end{theorem}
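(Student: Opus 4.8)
The plan is to establish the two assertions of the theorem — that each $R_{F,G}(m)$ is an isomorphism, and that the family $\{R_{F,G}(m)\}_m$ is a morphism in $\PP_q$ (i.e.\ natural with respect to $\Gamma_q^{d+e}\V$) — essentially separately. The first is immediate: by Proposition \ref{prop-char}, $F(m)$ is naturally an $A_q(m,m)_d$-comodule and $G(m)$ an $A_q(m,m)_e$-comodule, and $R_{F,G}(m)=R_{F(m),G(m)}$ is the comodule R-matrix recalled above, which on finite projective $\kk$-modules is always invertible. All the work is in the naturality, and this is exactly where Theorem \ref{Rep_thm} is needed: $R_{F,G}$ is a priori only a pointwise-defined family of linear maps, and one has no direct grip on a general morphism in $\Gamma_q^{d+e}\V$.

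First I would reduce to the case $F=\Gamma_q^{d,N}$, $G=\Gamma_q^{e,N}$ for a fixed $N\ge d+e$. Two easy observations drive this. At a fixed object $m$, the R-matrix $R_{F(m),G(m)}$ is natural in $F(m)$ and $G(m)$ as comodules, and additive in each; this follows directly from the defining identities for the braiding $\sigma$ on $A_q(m,m)$. By Theorem \ref{Rep_thm}, $\Gamma_q^{d,N}$ is a projective generator of $\PP_q^d$, so there is a componentwise-surjective morphism $\pi\colon P\twoheadrightarrow F$ in $\PP_q^d$ with $P$ a finite direct sum of copies of $\Gamma_q^{d,N}$, and likewise $\pi'\colon Q\twoheadrightarrow G$ in $\PP_q^e$ with $Q$ a sum of copies of $\Gamma_q^{e,N}$. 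Granting that $R_{\Gamma_q^{d,N},\Gamma_q^{e,N}}$ is a morphism in $\PP_q$, additivity of $R$ gives that $R_{P,Q}$ is too, hence $\theta:=(\pi'\otimes\pi)\circ R_{P,Q}\colon P\otimes Q\to G\otimes F$ is a morphism in $\PP_q$; by the componentwise naturality of $R$ in its comodule arguments one has $\theta(m)=R_{F,G}(m)\circ(\pi\otimes\pi')(m)$ for every $m$, and since $\pi\otimes\pi'$ is a componentwise epimorphism in $\PP_q$, a one-line cancellation argument forces $\{R_{F,G}(m)\}_m$ to be a morphism in $\PP_q$.

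It then remains to treat $\Gamma_q^{d,N}$ and $\Gamma_q^{e,N}$, where everything becomes explicit and Lemma \ref{prop-im} does the job. Viewing $\Gamma_q^{d,N}$ as the sub-comodule-functor of $(V_N^{\otimes d})^{*}\otimes\bigotimes^d$ consisting of $\cB_d$-equivariant maps (and similarly for $\Gamma_q^{e,N}$), a short computation of the comodule R-matrix shows that, under the identifications $(\Gamma_q^{d,N}\otimes\Gamma_q^{e,N})(m)\cong\Hom_{\cB_d\times\cB_e}(V_N^{\otimes(d+e)},V_m^{\otimes(d+e)})$ and its analogue for $\Gamma_q^{e,N}\otimes\Gamma_q^{d,N}$, the map $R_{\Gamma_q^{d,N}(m),\Gamma_q^{e,N}(m)}$ is $X\mapsto\rho_{d+e,m}(\kappa)\circ X\circ\tau$, where $\kappa\in\cH_{d+e}$ is the $m$-independent element of Lemma \ref{prop-im} (so $\rho_{d+e,m}(\kappa)=R_{V_m^{\otimes d},V_m^{\otimes e}}$) and $\tau$ is a fixed, $m$-independent block-transposition. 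On the other hand, a morphism $\phi\in\Hom_{\Gamma_q^{d+e}\V}(m,m')$ acts on both $\Gamma_q^{d,N}\otimes\Gamma_q^{e,N}$ and $\Gamma_q^{e,N}\otimes\Gamma_q^{d,N}$ simply by post-composition with $\phi$, so the required naturality square collapses to the identity $\rho_{d+e,m'}(\kappa)\circ\phi=\phi\circ\rho_{d+e,m}(\kappa)$, which holds because $\phi$ is $\cB_{d+e}$-equivariant (equivalently: the $\cB_{d+e}$-action on tensor powers consists of morphisms of quantum polynomial functors, cf.\ Section \ref{sec:exs}).

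The hard part is really the first reduction, and it is the reason Theorem \ref{Rep_thm} is indispensable: nothing in the definition of $R_{F,G}$ makes it manifestly a natural transformation, and the only leverage available is to replace an arbitrary $F,G$ by the explicit projective generators $\Gamma_q^{d,N}$ — for which, crucially, the $m$-independence of $\kappa$ in Lemma \ref{prop-im} makes the braiding commute with all morphisms of $\Gamma_q^{d+e}\V$ for free. Without these inputs one would have to verify directly that the square of Proposition \ref{prop-char} for $F\otimes G$ and $G\otimes F$ intertwines $R_{F,G}$, a bookkeeping argument using the braided-bialgebra axioms for $\sigma$ together with the compatibility \eqref{functor_composition} of the coactions $F''_{m,n}$ with the comultiplications $\Delta_{n,m,\ell}$.
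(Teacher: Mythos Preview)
Your proof is correct and follows the same overall architecture as the paper's: establish the base case using Lemma~\ref{prop-im} (the $m$-independence of $\kappa$), then use Theorem~\ref{Rep_thm} to reduce arbitrary $F,G$ to the base case via the pointwise naturality of the comodule $R$-matrix. The difference is in the choice of base case and reduction mechanism.

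The paper takes $F=\bigotimes^d$, $G=\bigotimes^e$ as the base case, so that the naturality square is literally the statement that $\rho_{d+e,n}(\kappa)\circ x = x\circ\rho_{d+e,m}(\kappa)$ for $x\in\Hom_{\cH_{d+e}}(V_m^{\otimes d+e},V_n^{\otimes d+e})$; no auxiliary flip $\tau$ or identification with Hom-spaces is needed. The reduction is then by \emph{subquotients}: any $F\in\PP_q^d$ is a subquotient of copies of $\bigotimes^d$ (this is what Theorem~\ref{Rep_thm} is used for, via $\Gamma_q^{d,n}\subset (\bigotimes^d)^{\oplus n^d}$), and one invokes the compatibility of the $R$-matrix with restriction to sub-comodules. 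Your route takes $\Gamma_q^{d,N},\Gamma_q^{e,N}$ as the base case and reduces via epimorphisms from projective generators, cancelling a surjection. Your base-case computation is a bit heavier (you must unwind the comodule structure on $\Gamma_q^{d,N}(m)$ and use that the $R$-matrix against a trivial comodule is the flip, together with the hexagon axioms, to arrive at $X\mapsto\rho_{d+e,m}(\kappa)\circ X\circ\tau$), but your reduction step is arguably cleaner, since it avoids the two-step subquotient description. Both arguments ultimately rest on the same two ingredients: Lemma~\ref{prop-im} and the comodule-level naturality of $R$.
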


\begin{proof}
We only need to show that $R_{F,G} \in \Hom_{\PP_q}(F\otimes G,G\otimes
F)$; the fact that $R_{F,G}$ is an isomorphism then follows immediately.

We first prove $R_{F,G} \in \Hom_{\PP_q}(F\otimes G,G\otimes F)$ in the
case where $F=\bigotimes^d$ and $G=\bigotimes^e$.  In that case we need to show that for
any $x\in \Hom_{\cH_{d+e}}(V_m^{\tn d+e},V_n^{\tn d+e})$ the diagram
\begin{equation}
\label{eq-R}
\xymatrix{
V_m^{\tn d}\otimes V_m^{\tn e} \ar[rr]^{\bigotimes^{d+e}(x)} \ar[d]_{R_{V_m^{\tn d},V_m^{\tn e}}} && V_n^{\tn d}
\otimes V_n^{\tn e} \ar[d]^{R_{V_n^{\tn d},V_n^{\tn e}}} \\
V_m^{\tn e}\otimes V_m^{\tn d} \ar[rr]^{\bigotimes^{d+e}(x)} && V_n^{\tn e}\otimes V_n^{\tn d} 
}
\end{equation}
commutes.
Cleary we have that  $\bigotimes^{d+e}(x)\in \Hom_{\cH_{d+e}}(V_m^{\tn d+e},V_n^{\tn d+e})$,
i.e. for all $\tau\in \cH_{d+e}$
$$
\xymatrix{
\bigotimes^{d+e}(x)\circ \rho_{d+e,m}(\tau)=\rho_{d+e,n}(\tau)\circ \bigotimes^{d+e}(x).
}
$$
In particular this is true for $\tau=\kappa$, which, by Lemma \ref{prop-im},
is precisely the commutativity of (\ref{eq-R}).

Now, by Theorem \ref{Rep_thm}, any $F\in \PP_q^d$ is a subquotient of some
copies of $\bigotimes^d$.  Therefore to prove the theorem in general it suffices
to prove it for $F=F'/F''$ and $G=G'/G''$ such that $F, G\in \PP_q$, where $F''\subset F'\subset \bigotimes^d$
and $G''\subset G'\subset \bigotimes^e$.  
In other words, we need to show that for $F$ and $G$ as in the previous sentence
and any $x\in \Hom_{\cH_{d+e}}(V_m^{\tn d+e},V_n^{\tn d+e})$ the diagram
\begin{equation*}
\xymatrix{
F(m)\otimes G(m) \ar[rr]^{F\otimes G(x)} \ar[d]_{R_{F(m),G(m)}} && F(n) \otimes
G(n) \ar[d]^{R_{F(n),G(n)}} \\
G(m)\otimes F(m) \ar[rr]^{G\otimes F(x)} && G(n)\otimes F(n) 
}
\end{equation*}
the diagram commutes.  This is a consequence of the commutativity of (\ref{eq-R})
and the fact that the R-matrix is compatible with restriction.  In other words, given $V\in \CoMod{A_q(m,m)_d}$ and $W\in \CoMod{A_q(m,m)_e}$ and sub-comodules $V'\subset V$ and
$W'\subset W$ then
$
R_{V',W'}=R_{V,W}|_{V'\otimes W'}.
$
\end{proof}

Let $\Omega(n,d)$ be the set of tuples $I=(i_1,i_2,\cdots,i_d)$, where $1\leq i_k\leq n$ for any $1\leq k\leq d$.  We call $I$ \textbf{increasing} if $i_1\leq i_2\leq \cdots \leq i_d$ and $I$ is \textbf{strictly increasing} if $i_1<i_2<\cdots < i_d$.  We denote by $e_I$ the element $e_{i_1}\otimes e_{i_2}\otimes \cdots \otimes e_{i_d}\in V_n^{\otimes d}$.
We now introduce a pairing $(,)$ on $V_n^{\otimes d}$, for any $I,J\in \Omega(n,d)$,
$$(e_I,e_J):=\delta_{IJ}, $$
where $\delta_{IJ}$ is the Kronecker symbol.

\begin{lemma}
\label{Contragredient_Braid}
Given  any $w\in \fS_d$, $I,J\in \Omega(n,d)$, we have 
$$(e_I\cdot T_w, e_J)=(e_I,e_J\cdot T_{w^{-1}}). $$
\end{lemma}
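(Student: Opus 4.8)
The plan is to identify the pairing $(\,,\,)$ as the one making $\{e_I : I\in\Omega(n,d)\}$ an orthonormal basis of $V_n^{\otimes d}$, so that for every $\kk$-linear operator $A$ on $V_n^{\otimes d}$ one has $(Ae_I,e_J)=(e_I,A^\top e_J)$, where $A^\top$ is the transpose of $A$ with respect to this basis. Writing $\tau_w\colon V_n^{\otimes d}\to V_n^{\otimes d}$ for the operator $v\mapsto v\cdot T_w=\rho_{d,n}(T_w)(v)$, the assertion of the lemma becomes exactly the operator identity $\tau_w^\top=\tau_{w^{-1}}$.

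The first step is the case $w=s_i$: I claim $\tau_{s_i}=\rho_{d,n}(T_i)$ is symmetric, i.e. $\tau_{s_i}^\top=\tau_{s_i}$. Since the pairing on $V_n^{\otimes d}$ is the $d$-fold tensor product of the standard pairing on $V_n$, and $\tau_{s_i}$ acts as $R_n$ on the $i$-th and $(i{+}1)$-st tensor factors and as the identity elsewhere, it suffices to check that $R_n\colon V_n^{\otimes 2}\to V_n^{\otimes 2}$ is symmetric with respect to the pairing for which $\{e_i\otimes e_j\}$ is orthonormal. This is a short case analysis on \eqref{R_matrix}: comparing $(R_n(e_i\otimes e_j),e_k\otimes e_l)$ with $(e_i\otimes e_j,R_n(e_k\otimes e_l))$, the diagonal contributions (the coefficient $q$ when $i=j$, and the coefficient $q-q^{-1}$ of the $i>j$, $(k,l)=(i,j)$ entry) are manifestly symmetric, while the only off-diagonal contribution pairs the case $i<j$ against the case $k=j>l=i$, where both sides equal the coefficient $1$ of the ``swap'' term $e_j\otimes e_i$ occurring in $R_n(e_i\otimes e_j)$ and in $R_n(e_j\otimes e_i)$ respectively.

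For general $w$, fix a reduced expression $w=s_{i_1}\cdots s_{i_\ell}$; then $w^{-1}=s_{i_\ell}\cdots s_{i_1}$ is again reduced, and since $T_w$ is independent of the chosen reduced word (the braid relations hold), $T_{w^{-1}}=T_{i_\ell}\cdots T_{i_1}$. Hence $\tau_w$ is the composite of $\tau_{s_{i_1}},\dots,\tau_{s_{i_\ell}}$ in one order and $\tau_{w^{-1}}$ is the composite of the same operators in the reverse order (the precise order depending on the convention for the right action, but in either case the two are mutually reversed). Taking the transpose of $\tau_w$ reverses the order of composition and, by the previous step, fixes each factor, so $\tau_w^\top=\tau_{w^{-1}}$. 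Combining, $(e_I\cdot T_w,e_J)=(\tau_w e_I,e_J)=(e_I,\tau_w^\top e_J)=(e_I,\tau_{w^{-1}}e_J)=(e_I,e_J\cdot T_{w^{-1}})$, as desired.

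There is no serious obstacle here. The only points that require care are the brief verification that $R_n$ is symmetric and, in the last step, the bookkeeping: one must confirm that the reversal of the word under $w\mapsto w^{-1}$ exactly cancels the reversal of composition order induced by transposition, so that the answer is $\tau_{w^{-1}}$ and not $\tau_w$ again.
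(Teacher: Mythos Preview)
Your proof is correct and follows essentially the same approach as the paper's. The paper's proof is very terse: it simply says to reduce to the case $d=2$ and then verify $(R_n(e_i\otimes e_j),e_k\otimes e_\ell)=(e_i\otimes e_j,R_n(e_k\otimes e_\ell))$ directly from the formula for $R_n$. Your argument makes explicit the reduction step (via a reduced expression for $w$ and the reversal $w\mapsto w^{-1}$ matching the transpose of a composition) and spells out the symmetry check for $R_n$, but the underlying strategy is identical.
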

\begin{proof}
It can be reduced to the case $d=2$.  In this case, it suffices to check that for any $i,j,k,\ell$,  $$(R_n(e_i\otimes e_j), e_{k}\otimes e_\ell)=(e_i\otimes e_j, R_n(e_k\otimes e_\ell)).$$
This is a straightforward computation from the definition of the R-matrix $R_n$. 
\end{proof}

The following lemma follows from the definition of duality functor $\sharp $.
\begin{lemma}
\label{Iden_Tensor_power}
There exists a canonical isomorphism.
$(\bigotimes ^d)^\sharp\simeq \bigotimes ^d$.
\end{lemma}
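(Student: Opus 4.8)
The plan is to exhibit the natural isomorphism explicitly. For each $n$, let $\theta_n\colon V_n^{\otimes d}\to (V_n^{\otimes d})^*$ be the map induced by the pairing $(\,,\,)$ on $V_n^{\otimes d}$, that is $\theta_n(e_I)=e_I^*$ for $I\in\Omega(n,d)$, equivalently $\langle\theta_n(v),w\rangle=(v,w)$. Since $(\,,\,)$ is perfect, each $\theta_n$ is a $\kk$-linear isomorphism. The key observation is that $\theta_n$ is exactly the identification $V_n^{\otimes d}\cong(V_n^{\otimes d})^*$ built into the definition of $\sharp$: it is the $d$-th tensor power of $V_n\cong V_n^*$, $e_i\mapsto e_i^*$, composed with the canonical isomorphism $(V_n^*)^{\otimes d}\cong(V_n^{\otimes d})^*$.

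First I would unwind what $(\bigotimes^d)^\sharp$ does on a morphism $x\in\Hom_{\cB_d}(V_m^{\otimes d},V_n^{\otimes d})$: by definition of $\sharp$ it is $\sigma(x)^*\colon(V_m^{\otimes d})^*\to(V_n^{\otimes d})^*$, the transpose of $\sigma(x)\in\Hom_{\cB_d}(V_n^{\otimes d},V_m^{\otimes d})$, where $\sigma$ is the identification used in the construction of $\sharp$. By construction $\sigma(x)$ is the adjoint of $x$ with respect to the pairings, i.e. $(\sigma(x)(w),v)=(w,x(v))$ for all $v\in V_m^{\otimes d}$ and $w\in V_n^{\otimes d}$; since $(\,,\,)$ is symmetric this reads $(v,\sigma(x)(w))=(x(v),w)$.

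Naturality of $\theta:=(\theta_n)_n$ then amounts to the identity $\theta_n\circ x=\sigma(x)^*\circ\theta_m$ of maps $V_m^{\otimes d}\to(V_n^{\otimes d})^*$. Evaluating at $v$ and pairing against $w$, the left-hand side gives $(x(v),w)$ while the right-hand side gives $\langle\theta_m(v),\sigma(x)(w)\rangle=(v,\sigma(x)(w))$, and these coincide by the displayed identity. Hence $\theta$ is a morphism in $\PP_q^d$, and being objectwise invertible it is an isomorphism $\bigotimes^d\xrightarrow{\ \sim\ }(\bigotimes^d)^\sharp$.

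The argument is completely formal. The only subtlety is that $\sigma(x)$ genuinely lands in $\Hom_{\cB_d}$ and not merely in $\Hom$ — i.e. that taking adjoints with respect to $(\,,\,)$ preserves braid intertwiners — and this is precisely the content of Lemma \ref{Contragredient_Braid} applied to a simple reflection (it shows each $T_i$, hence $R_n$, acts self-adjointly for $(\,,\,)$). An alternative route avoiding even this point is to combine Lemma \ref{Duality_Tensor} with the degree-one statement $I^\sharp\cong I$, which is the same one-line computation in rank $d=1$; I see no real obstacle either way beyond bookkeeping of the tensor-factor orderings in $(V_n^*)^{\otimes d}\cong(V_n^{\otimes d})^*$.
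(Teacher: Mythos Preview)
Your proof is correct and is exactly the unpacking of the paper's one-line argument ``follows from the definition of duality functor $\sharp$'': the identification $\theta_n$ you write down is precisely the $d$-th tensor power of the map $e_i\mapsto e_i^*$ that the paper builds into the construction of $\sharp$, and your naturality check simply verifies that $\sigma(x)^*$ agrees with $x$ under this identification. The only comment is that your final remark about $\sigma(x)$ landing in $\Hom_{\cB_d}$ is logically prior to this lemma---it is needed for $\sharp$ to be well-defined at all, not for the present isomorphism---so you need not invoke Lemma~\ref{Contragredient_Braid} here.
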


By this lemma, we can identify $\bigotimes^d$ and $(\bigotimes^d)^{\sharp}$.

\begin{proposition}
\label{Braid_Duality}
Given any $w\in \fS_d$,  we have
$$ 
\xymatrix{
(T_w)^{\sharp}=T_{w^{-1}}:  \bigotimes^d\to \bigotimes^d .
}
$$
\end{proposition}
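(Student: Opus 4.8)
The plan is to reduce the claim to Lemma~\ref{Contragredient_Braid} once the two identifications in play are made completely explicit: the duality $\sharp$, and the canonical isomorphism $(\bigotimes^d)^\sharp\simeq\bigotimes^d$ of Lemma~\ref{Iden_Tensor_power}. The statement ``$(T_w)^\sharp=T_{w^{-1}}$'' is to be read relative to that isomorphism, so the real task is to transport the transpose of the right-action operator $\rho_{d,n}(T_w)$ across it and recognize the result as $\rho_{d,n}(T_{w^{-1}})$.

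First I would pin down the isomorphism of Lemma~\ref{Iden_Tensor_power}. Since $\sharp$ is built on the identification $V_n\cong V_n^*$, $e_i\mapsto e_i^*$, the composite $V_n^{\otimes d}\simeq(V_n^*)^{\otimes d}\simeq(V_n^{\otimes d})^*$ sends $e_I\mapsto e_I^*$, the dual basis vector. Writing $\iota_n\colon(V_n^{\otimes d})^*\to V_n^{\otimes d}$ for its inverse, $\iota_n$ is dual to the pairing from Lemma~\ref{Contragredient_Braid}: it is characterized by $(\iota_n(\xi),v)=\xi(v)$ for all $\xi$ and $v$, so that $\iota_n^{-1}(u)$ is the functional $v\mapsto(u,v)$. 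I would also remark that $(\iota_n)_n$ is natural in $n$ (this amounts to $\cB_d$-equivariance of the structure maps, as elsewhere in the paper), so that $\iota=(\iota_n)_n$ is a legitimate model for the isomorphism of Lemma~\ref{Iden_Tensor_power}.

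Next I would compute $(T_w)^\sharp$ directly from the definition of $\sharp$ on morphisms: since $f^\sharp(n)=f(n)^*$ is the $\kk$-linear transpose and $T_w(n)=\rho_{d,n}(T_w)$ is the right-action operator, we have $(T_w)^\sharp(n)=\rho_{d,n}(T_w)^*\colon\xi\mapsto\xi\circ\rho_{d,n}(T_w)$ on $(V_n^{\otimes d})^*$. Now conjugate by $\iota_n$: for $u\in V_n^{\otimes d}$ the vector $z:=\iota_n\bigl(\rho_{d,n}(T_w)^*(\iota_n^{-1}(u))\bigr)$ is, by the characterization of $\iota_n$, the unique element satisfying $(z,v)=(u,v\cdot T_w)$ for all $v\in V_n^{\otimes d}$. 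Applying Lemma~\ref{Contragredient_Braid} with $w$ replaced by $w^{-1}$ and extending bilinearly gives $(u\cdot T_{w^{-1}},v)=(u,v\cdot T_w)$ for all $u,v$, which forces $z=u\cdot T_{w^{-1}}=T_{w^{-1}}(n)(u)$. As $n$ and $u$ are arbitrary, $\iota_n\circ(T_w)^\sharp(n)\circ\iota_n^{-1}=T_{w^{-1}}(n)$ for every $n$, i.e. $(T_w)^\sharp=T_{w^{-1}}$ under the identification of Lemma~\ref{Iden_Tensor_power}.

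The step I expect to require the most care is the first one: Lemma~\ref{Iden_Tensor_power} only asserts the existence of a canonical isomorphism, so one must actually verify it is the pairing identification $\iota_n$ and check naturality. Everything after that is routine bookkeeping with transposes and the left/right action conventions, with no content beyond Lemma~\ref{Contragredient_Braid}.
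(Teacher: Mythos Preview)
Your proposal is correct and takes essentially the same approach as the paper: the paper's proof is the one-line ``It follows from Lemma~\ref{Contragredient_Braid} and Lemma~\ref{Iden_Tensor_power}'', and you have simply unpacked those two ingredients, transporting the transpose $\rho_{d,n}(T_w)^*$ across the basis identification $e_I\mapsto e_I^*$ and invoking the adjointness $(u\cdot T_{w^{-1}},v)=(u,v\cdot T_w)$. Your caution about verifying that the isomorphism of Lemma~\ref{Iden_Tensor_power} is indeed the pairing identification is well placed, but since $\sharp$ is \emph{defined} via $e_i\mapsto e_i^*$ this is immediate.
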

\begin{proof}
It follows from Lemma \ref{Contragredient_Braid} and Lemma \ref{Iden_Tensor_power}.
\end{proof}

The following proposition is about the compatibility between the duality functor $\sharp$ and the braiding $R$. 
\begin{proposition}
\label{R_matrix_duality}
Given any two quantum polynomial functors $F,G\in \PP_q$, we have 
$$(R_{F,G})^\sharp=  R_{G^\sharp, F^\sharp} .$$
\end{proposition}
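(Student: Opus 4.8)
The plan is to reduce the statement to the case of tensor product functors $F=\bigotimes^d$, $G=\bigotimes^e$, using Theorem \ref{Rep_thm} together with Lemma \ref{Duality_Tensor} (which identifies the duality of a tensor product with the tensor product of duals), and then verify the identity by an explicit computation with the $R$-matrix on $V_m^{\otimes d}$. Recall that both $R_{F,G}$ and $R_{G^\sharp,F^\sharp}$ are maps of quantum polynomial functors, so by Theorem \ref{Rep_thm} it suffices to check they agree after evaluating on all $n$ and using that every $F\in\PP_q^d$ is a subquotient of copies of $\bigotimes^d$. Concretely, I would first establish the identity at the level of the tensor powers and then argue that it is compatible with passing to subquotients, exactly in the spirit of the proof of Theorem \ref{Braiding}: the $R$-matrix restricts to subcomodules and passes to quotients, and the duality functor $\sharp$ is exact, so both sides of the claimed equation behave coherently under $F'\supset F''$ and $G'\supset G''$.

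The key step, therefore, is to compute $(R_{\bigotimes^d,\bigotimes^e})^\sharp$ and compare it with $R_{(\bigotimes^e)^\sharp,(\bigotimes^d)^\sharp}$. By Lemma \ref{Iden_Tensor_power} we identify $(\bigotimes^d)^\sharp$ with $\bigotimes^d$ and $(\bigotimes^e)^\sharp$ with $\bigotimes^e$, and under this identification the claim becomes $(R_{\bigotimes^d,\bigotimes^e})^\sharp = R_{\bigotimes^e,\bigotimes^d}$ as endomorphisms of $\bigotimes^{d+e}$. Now Lemma \ref{prop-im} expresses $R_{V_m^{\otimes d},V_m^{\otimes e}}$ as $\rho_{d+e,m}(T_{w_{d,e}})$, where $w_{d,e}$ is the block transposition permutation sending $i\mapsto i+e$ for $i\le d$ and $i\mapsto i-d$ for $i>d$. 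Applying Proposition \ref{Braid_Duality}, $(T_{w_{d,e}})^\sharp = T_{w_{d,e}^{-1}}$; and one checks directly that $w_{d,e}^{-1}=w_{e,d}$, so that $(R_{V_m^{\otimes d},V_m^{\otimes e}})^\sharp$ is realized by $\rho_{d+e,m}(T_{w_{e,d}}) = R_{V_m^{\otimes e},V_m^{\otimes d}}$, which is exactly $R_{\bigotimes^e,\bigotimes^d}$ evaluated at $m$. This settles the tensor-power case.

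I expect the main obstacle to be the bookkeeping in the reduction from arbitrary $F,G$ to tensor powers, namely confirming that the duality $\sharp$ and the $R$-matrix are jointly compatible with the subquotient presentation. Specifically, one must verify that if $F=F'/F''$ with $F''\subset F'\subset\bigotimes^d$ then $F^\sharp$ can be presented as $(F'')^{\perp}/(F')^{\perp}$ inside $(\bigotimes^d)^\sharp\cong\bigotimes^d$, and likewise for $G$, and that under these identifications $R_{F^\sharp,G^\sharp}$ is indeed the map induced by $R_{\bigotimes^d,\bigotimes^e}^\sharp$ restricted and quotiented appropriately. This uses the fact (already invoked in the proof of Theorem \ref{Braiding}) that $R$ is compatible with restriction to subcomodules, together with the self-duality of the pairing $(\cdot,\cdot)$ on $V_n^{\otimes d}$ from Lemma \ref{Contragredient_Braid}. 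Once this diagram chase is set up carefully, the desired identity $(R_{F,G})^\sharp = R_{G^\sharp,F^\sharp}$ follows by comparing the two sides through the surjection from copies of $\bigotimes^d\otimes\bigotimes^e$, using that a morphism of quantum polynomial functors is determined by its values on these projective generators.
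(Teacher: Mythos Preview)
Your proposal is correct and follows essentially the same route as the paper: reduce to $F=\bigotimes^d$, $G=\bigotimes^e$ via the subquotient argument from Theorem~\ref{Braiding}, identify $R_{\bigotimes^d,\bigotimes^e}$ with $T_{w_{d,e}}$ via Lemma~\ref{prop-im}, apply Proposition~\ref{Braid_Duality}, and observe $w_{d,e}^{-1}=w_{e,d}$. The paper's own proof is terser about the reduction (it simply invokes ``the argument in Theorem~\ref{Braiding}'' together with Lemma~\ref{Duality_Tensor}), whereas you spell out the subquotient bookkeeping more carefully; but the content is the same.
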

\begin{proof}
It suffices to check the following diagram commutes,
\begin{equation}
\xymatrix{
(G\otimes F)^{\sharp}  \ar[d]^{\simeq} \ar[r]^{(R_{F,G})^\sharp} &  (F\otimes G)^\sharp  \ar[d]^{\simeq} \\
G^{\sharp}\otimes F^\sharp   \ar[r]^{R_{G^\sharp, F^\sharp}} &  F^\sharp \otimes G^\sharp },
\end{equation}
where the horizontal maps are the canonical isomorphisms in Lemma \ref{Duality_Tensor}.
By the functoriality of $R$, 
as the argument in Theorem \ref{Braiding} we can reduce to the case $F=\bigotimes^d$ and $G=\bigotimes^e$.   Under the identification $(\bigotimes ^n)^\sharp \simeq \bigotimes^n$ for any $n$, 
it is enough for us to check $(R_{\otimes^d, \otimes^e})^\sharp= R_{\otimes^e, \otimes^d}$.  By Theorem \ref{Braiding} and Proposition \ref{Braid_Duality}, we only need to show that $w_{d,e}^{-1}=w_{e,d}$, which is clearly true.  
\end{proof}

\section{Quantum Schur and Weyl Functors}
\label{sec: schurfunctors}
In this section we assume $q^2\neq -1$.  We define quantum Schur and Weyl functors.  As in the setting of classical strict polynomial functors, these families of functors play a fundamental role, and we use them here to construct the simple objects in $\PP_q$ (up to isomorphism).  In several key calculations in this section  we appeal to theorems in \cite{HH}.

\subsection{Quantum symmetric and exterior powers}
We call $I\in \Omega(n,d)$ \textbf{strict} if for any $1\leq k\neq \ell\leq d$, $i_k\neq i_\ell$.   Let $\Omega^{++}(n,d)$ be the set of strictly increasing tuples of integers in $\Omega(n,d)$.
We denote by $x_{IJ}$ the monomials 
$x_{i_1j_1}x_{i_2j_2}\cdots x_{i_d j_d}$ in $A_q(n,m)$ where $I=(i_1,i_2,\cdots,i_d)\in \Omega(n,d)$ and $J=(j_1,j_2,\cdots,j_d)\in \Omega(m,d)$. 

Recall that by Remark \ref{rem:Hecke} $\bigwedge_q^d(n)=V_n^{\tn d}\tn_{\cH_d}\chi_-$.  Note that $\bigwedge_q^d(n)$ is isomorphic to the $d^{\text{th}}$ graded component of
$$
\xymatrix{
\bigwedge_q^\bullet(n):=T(V_n)/I(R_n)
}
$$   
where $T(V_n)$ is the tensor algebra of $V_n$ and $I(R_n)$ is the two-sided ideal of $T(V_n)$, generated in degree two by $R_n(v\otimes w)+q^{-1}w\otimes v$, for 
$v,w\in V_n$.

As usual for exterior algebras, we use $\wedge$ to denote the product in the algebra $\bigwedge^\bullet_q(n)$.
For any $I\in \Omega(n,d)$ we denote by $\bar{e}_I$ the image of $e_I$ in $\bigwedge_q^d (n)$:
$$\bar{e}_I=e_{i_1}\wedge e_{i_2}\wedge \cdots \wedge e_{i_d}. $$
Moreover we have the following basic calculus of q-wedge products:
$$e_i\wedge e_j=\begin{cases} 

0   \qquad \text{ if } i=j \\
-q^{-1}e_j\wedge e_i \qquad \text{ if }  i>j
 \end{cases} $$

\begin{lemma}
\label{q_wedge_property}
 Let  $I=(i_1,i_2,\cdots,i_d)\in \Omega(n,d)$.
\begin{enumerate}
\label{q_wedge_cal}
\item 
If there exists $1\leq k\not= \ell\leq d$ such that $i_k=i_\ell$ then $e_{i_1}\wedge e_{i_2}\wedge \cdots \wedge e_{i_d}=0$. 
\item If $I$ is strictly increasing and  $\sigma\in\fS_d$, then 
$$e_{i_{\sigma(1)}}\wedge e_{i_{\sigma(2)}}\wedge \cdots \wedge e_{i_{\sigma(d)}} =(-q^{-1})^{\ell(\sigma)}e_{i_1}\wedge e_{i_2}\wedge \cdots \wedge e_{i_d}, $$ 
where $\ell(\sigma)$ is the length of $\sigma$.
\end{enumerate}
\end{lemma}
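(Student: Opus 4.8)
The plan is to work entirely inside the $q$-exterior algebra $\bigwedge_q^\bullet(n)$, using only the degree-two identities recorded immediately above, namely $e_i\wedge e_i=0$ and $e_i\wedge e_j=-q^{-1}\,e_j\wedge e_i$ for $i>j$ (equivalently $e_i\wedge e_j=-q\,e_j\wedge e_i$ for $i<j$). Since $\bigwedge_q^\bullet(n)$ is an associative algebra, these relations may be applied to any two adjacent tensor factors inside a longer product $e_{i_1}\wedge\cdots\wedge e_{i_d}$. The one observation driving both parts is that whenever $a\neq b$ the relation $e_a\wedge e_b=c\,e_b\wedge e_a$ holds with $c\in\{-q^{-1},-q\}$, and $c$ is a \emph{unit} because $q\in\kk^\times$; thus transposing two adjacent factors with distinct indices only rescales the product by a unit.

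For part (1), suppose $i_k=i_\ell$ with $k<\ell$. I would slide the factor $e_{i_k}$ to the right, one position at a time. Each transposition of $e_{i_k}$ with its current right neighbour $e_{i_m}$ either multiplies the whole product by a unit (if $i_m\neq i_k$), or brings two copies of $e_{i_k}$ into adjacent positions (if $i_m=i_k$), in which case the product vanishes since $e_{i_k}\wedge e_{i_k}=0$. As $i_\ell=i_k$ lies to the right of position $k$, the second alternative must occur no later than the moment $e_{i_k}$ reaches position $\ell$; hence $e_{i_1}\wedge\cdots\wedge e_{i_d}=0$.

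For part (2), I would induct on $\ell(\sigma)$. The base case $\ell(\sigma)=0$ gives $\sigma=\id$ and is trivial. If $\ell(\sigma)\geq 1$, choose a descent of $\sigma$, i.e.\ an index $k$ with $\sigma(k)>\sigma(k+1)$; since $I$ is strictly increasing this forces $i_{\sigma(k)}>i_{\sigma(k+1)}$. Applying $e_a\wedge e_b=-q^{-1}\,e_b\wedge e_a$ to the factors in positions $k$ and $k+1$ gives
$$e_{i_{\sigma(1)}}\wedge\cdots\wedge e_{i_{\sigma(d)}}=-q^{-1}\,e_{i_{\tau(1)}}\wedge\cdots\wedge e_{i_{\tau(d)}},$$
where $\tau=\sigma s_k$, which satisfies $\ell(\tau)=\ell(\sigma)-1$ precisely because $k$ is a descent of $\sigma$. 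The inductive hypothesis applied to $\tau$ supplies the factor $(-q^{-1})^{\ell(\sigma)-1}$, and multiplying through by $-q^{-1}$ yields $(-q^{-1})^{\ell(\sigma)}$, as claimed.

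Neither part presents a real obstacle. The only point worth a moment's thought is part (1) when the tuple has several repeated indices: rather than singling out a closest repeated pair and tracking the exact unit scalar, it is cleaner to phrase the argument so that the first time the sliding factor $e_{i_k}$ meets an equal index the product is already $0$. With that phrasing, all the scalars one picks up along the way are visibly units in $\kk$ and play no role, and the two displayed relations preceding the lemma are all that is used.
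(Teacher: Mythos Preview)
Your argument is correct and is precisely the elementary verification the paper alludes to; the paper's own proof consists of the single line ``Both parts follow easily from the definition of the $q$ wedge products, cf.\ Equations (2.3),(2.4) in \cite{HH}'', and you have simply written out those easy steps using the displayed relations $e_i\wedge e_i=0$ and $e_i\wedge e_j=-q^{-1}e_j\wedge e_i$ for $i>j$.
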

\begin{proof}
Both parts follow easily from the definition of the $q$ wedge products, cf. Equations (2.3),(2.4) in \cite{HH}.
%
%
\end{proof}

A consequence of above lemma is that   $\bigwedge_q^d(n)$ has a basis  $e_{i_1}\wedge\cdots\wedge  e_{i_d}$
for $1\leq i_1 <\cdots< i_d\leq n$. 
The $q$-antisymmetrization map $\alpha_d(n):\bigwedge_q^d(n)\to V_n^{\tn d}$
is given by
$$
e_{i_1}\wedge \cdots\wedge  e_{i_d} \mapsto \sum_{w\in \fS_d}(-q^{-1})^{\ell(w)}e_{i_{w(1)}}\tn\cdots\tn
e_{i_{w(d)}},$$
for $1\leq i_1 <\cdots< i_d\leq n$.

We define the following elements of $\cH_d$:
\begin{align*}
x_d&=\sum_{w\in \fS_d}q^{\ell(w)}T_w\\
y_d&=\sum_{w\in \fS_d}(-q^{-1})^{\ell(w)}T_w.
\end{align*}
In the current setting, it is convenient for us  to denote the right action of $\cH_d$ on $V_n^{\tn d}$ by a dot.  

\begin{lemma}
\label{q_anti_sym_computaion}
Given any tuple $I=(i_1,i_2,\cdots,i_d)\in \Omega(n,d)$ we have 
$$\alpha_d(n)(e_{i_1}\wedge\cdots\wedge  e_{i_d})=e_I\cdot y_d. $$
\end{lemma}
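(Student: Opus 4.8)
The plan is to compute both sides against the basis of $\bigwedge_q^d(n)$ given by $\bar e_J$ for strictly increasing $J$, and to reduce everything to the case of such a $J$ using the $q$-wedge calculus already recorded. First I would dispose of the degenerate case: if $I$ has a repeated index, then $\bar e_I = 0$ by Lemma \ref{q_wedge_property}(1), so the left-hand side is $0$; on the right-hand side one must check $e_I\cdot y_d = 0$ when $i_k = i_\ell$ for some $k\neq \ell$. For this I would use that $y_d$ is divisible (on the relevant side) by $(T_{s}+q^{-1})$ for the transposition $s$ of adjacent positions — or, after conjugating by a suitable $T_w$, reduce to showing $e_i\otimes e_i \cdot (1 + q^{-1}T_1^{-1})$-type expressions vanish, which follows directly from $R_n(e_i\otimes e_i) = q\, e_i\otimes e_i$ and the Hecke relation $(R_n - q)(R_n + q^{-1}) = 0$. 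Concretely: write $y_d = y'_d(1 + q\, T_k)$ (coset decomposition of $\mathfrak S_d$ over the parabolic generated by $s_k$, where $s_k$ swaps the positions of the two equal indices after reordering), and observe that $e_J \cdot (1 + q T_k) \cdot (\text{sign character component}) = 0$ when the $k$-th and $(k{+}1)$-th entries of $J$ agree. This is the standard argument that $y_d$ annihilates tensors with repeated adjacent entries.

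Next, for $I$ strict, I would reduce to the strictly increasing case. Write $I = J\cdot\sigma$ where $J \in \Omega^{++}(n,d)$ is the sorted tuple and $\sigma\in\mathfrak S_d$; then $e_I = e_J\cdot T_\sigma$ (by definition of the right $\cH_d$-action, since $I$ is obtained from $J$ by permuting tensor factors — here I should be slightly careful that $e_{J\sigma}$ literally equals $e_J\cdot T_\sigma$ only because the entries of $J$ are distinct, so no Hecke correction terms appear; this is exactly Lemma \ref{q_wedge_property}(2) transported from $\bigwedge_q^d$ back to $V_n^{\otimes d}$, or a direct check using \eqref{R_matrix}). On the left-hand side, $\bar e_I = e_{i_1}\wedge\cdots\wedge e_{i_d} = (-q^{-1})^{\ell(\sigma)}\bar e_J$ by Lemma \ref{q_wedge_property}(2). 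So it suffices to prove the identity for strictly increasing $J$ and then check the scalars match: $\alpha_d(n)(\bar e_I) = (-q^{-1})^{\ell(\sigma)}\alpha_d(n)(\bar e_J) = (-q^{-1})^{\ell(\sigma)} e_J\cdot y_d$, while $e_I\cdot y_d = e_J\cdot T_\sigma y_d$. So the strict case comes down to the identity $T_\sigma\, y_d = (-q^{-1})^{\ell(\sigma)} y_d$ in $\cH_d$, which is a standard property of the element $y_d = \sum_{w}(-q^{-1})^{\ell(w)}T_w$: it is the $q$-antisymmetrizer and satisfies $T_i\, y_d = -q^{-1} y_d$ for every generator (proof: split the sum over left cosets of $\langle s_i\rangle$ and use the Hecke relation $T_i^2 = (q - q^{-1})T_i + 1$, equivalently $(T_i + q^{-1})(T_i - q) = 0$), hence $T_w\, y_d = (-q^{-1})^{\ell(w)} y_d$ by induction on $\ell(w)$.

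Finally, for $J = (i_1 < \cdots < i_d)$ strictly increasing, I verify the identity directly: by definition $\alpha_d(n)(\bar e_J) = \sum_{w\in\mathfrak S_d}(-q^{-1})^{\ell(w)}\, e_{i_{w(1)}}\otimes\cdots\otimes e_{i_{w(d)}}$, and since the $i_j$ are distinct, $e_{i_{w(1)}}\otimes\cdots\otimes e_{i_{w(d)}} = e_J\cdot T_w$ (again using that distinctness means the braid action permutes these basis vectors without lower-order terms — this should be stated as a small sublemma, proved by induction on $\ell(w)$ using \eqref{R_matrix}, where the relevant branch is always the $i > j$ or $i < j$ case, never $i = j$). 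Therefore $\alpha_d(n)(\bar e_J) = \sum_w (-q^{-1})^{\ell(w)} e_J\cdot T_w = e_J\cdot\left(\sum_w (-q^{-1})^{\ell(w)} T_w\right) = e_J\cdot y_d$, as desired.

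The main obstacle I anticipate is not any single hard step but the bookkeeping around the claim ``$e_{i_{w(1)}}\otimes\cdots\otimes e_{i_{w(d)}} = e_I\cdot T_w$ when the indices are distinct'' — i.e.\ that on tensors with pairwise distinct indices the Hecke/braid action is just the permutation action with a sign-free, scalar-free outcome. One has to be careful because $R_n$ acts with a genuine $q$-twist ($R_n(e_i\otimes e_j) = e_j\otimes e_i$ for $i<j$ but picks up $(q - q^{-1})$ correction for $i > j$), so the precise statement is that for \emph{distinct} adjacent indices exactly one of the two clean branches of \eqref{R_matrix} applies and iterating along a reduced word for $w$ keeps one in the ``distinct indices'' regime. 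I would isolate this as an explicit sublemma at the start of the proof; everything else is then routine manipulation in $\cH_d$ using the Hecke relation, and the two scalar bookkeeping checks (the $(-q^{-1})^{\ell(\sigma)}$ matching and the annihilation in the repeated-index case) fall out of the identity $T_i\, y_d = -q^{-1}y_d$.
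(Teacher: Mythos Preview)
Your proposal is correct and follows essentially the same route as the paper: reduce the strict case to strictly increasing $J$ via $\bar e_I=(-q^{-1})^{\ell(\sigma)}\bar e_J$ and $T_\sigma y_d=(-q^{-1})^{\ell(\sigma)}y_d$, verify the base case directly using the sublemma $e_{J\cdot w}=e_J\cdot T_w$ for increasing $J$ along a reduced word (exactly the bookkeeping you flag), and handle repeated indices by bringing them adjacent. One small correction in the non-strict case: your factorization $y_d=y'_d(1+qT_k)$ is misstated; the paper instead simply combines $e_{I'}\cdot T_k=qe_{I'}$ with $T_k y_d=-q^{-1}y_d$ to obtain $(q+q^{-1})\,e_{I'}\cdot y_d=0$, and then invokes the standing hypothesis $q^2\neq-1$.
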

\begin{proof}

Suppose first that $I$ is strict.  Let $I_0$ be the strictly increasing tuple such that $I=I_0\cdot \sigma$ for a unique permutation $\sigma\in \fS_d$. The following computation proves the lemma in this case:
\begin{equation}
\begin{aligned}
\alpha_d(n)(\bar e_I) &=(-q^{-1})^{\ell(\sigma)}\alpha_q^d(\bar e_{I_0})\\
&=(-q^{-1})^{\ell(\sigma)} \sum_{w\in \fS_d}(-q^{-1})^{\ell(w)}e_{I_0\cdot w}\\
&= (-q^{-1})^{\ell(\sigma)}e_{I_0}\cdot y_d\\
&= e_{I_0}\cdot (T_{\sigma}\cdot y_d)\\
&=  e_I \cdot  y_d 
\end{aligned}
\end{equation}
where the first equality follows from Lemma \ref{q_wedge_cal}
(2), the third and the last equalities holds because $I_0$ is strictly increasing and the fourth equality follows from the following fact:
$$T_{\sigma}\cdot y_d=(-q^{-1})^{\ell(\sigma)}  y_d.$$

Now suppose that $I$ is not strict.  Then by  Lemma \ref{q_wedge_cal} (1) it is enough to
show 
\begin{equation}
\label{Zero_formula}
e_I\cdot y_d=0.
\end{equation}
Let $I=(i_1,i_2,\cdots,i_d)$. Assume that $k$ is the maximal number such that  $i_1,i_2,\cdots,i_k$ are all distinct but $i_{k+1}$ is equal to one of $i_1,i_2,\cdots,i_k$.
Let $\sigma$ be the (unique) element in $\fS_{k}\subset \fS_d$, such that $(i_{\sigma^{-1}(1)},i_{\sigma^{-1}(2)}.\cdots,i_{\sigma^{-1}(k)}) $ are strictly increasing. Then $e_I=e_{I\cdot \sigma^{-1}}T_\sigma$ and
 \begin{equation*}
\begin{aligned}
e_I\cdot y_d=e_{I\cdot \sigma^{-1}}(T_\sigma y_d) 
= (-q^{-1})^{\ell(\sigma)} e_{I\cdot \sigma^{-1}}\cdot y_d.
\end{aligned}
\end{equation*}
Hence to show the formula (\ref{Zero_formula}), we can always assume that $i_1<i_2<\cdots <i_k$ and  $i_{k+1}=i_a$, where $1\leq a\leq k$.   Take the element $S=T_{a+1}\cdots T_{k-1}T_{k}\in \cH_d$. Then $e_I=e_{I'}\cdot S$, where $I'=(i_1,i_2,\cdots
,i_a, i_{k+1},i_{a+1},i_{a+2},\cdots ,i_k, i_{k+2},i_{k+3},\cdots ,i_d)$ and then
\begin{equation*}
\begin{aligned}
e_I\cdot y_d =e_{I'}(Sy_d)
=(-q^{-1})^{k-a}e_{I'}\cdot y_d
\end{aligned}.
\end{equation*}
Note that $e_{I'}T_{a}=qe_{I'}$.  On the other hand
$$e_{I'}(T_a\cdot y_d) =(-q^{-1})(e_{I'}\cdot y_d).$$
By the assumption that $q^2\neq -1$, it forces 
$e_{I'}\cdot y_d=0$,
and hence 
$e_I\cdot y_d=0.$
\end{proof}

Recall also that we have the quantum symmetric power $$S_q^d(n)=V_n^{\tn d}\tn_{\cH_d} {\chi_+},$$ 
and the quantum divided power functor 
$$\Gamma_q^d(n)=\Hom_{\cH_d}(\chi_+, V_n^{\otimes d}). $$ 

Let $p_d$ be the projection map $p_d: \bigotimes^d\to \bigwedge^d_q$ and let $q_d$ be the  projection morphism $q_d: \bigotimes^d\to  S_q^d$. 
Let $i_d: \Gamma_q^d\to \bigotimes^d$ be the natural inclusion map.   It is clear that $p_d, q_d, i_d$ are morphisms of quantum polynomial functors.   

\begin{proposition}
\label{prop-antisym}
 The $q$-antisymmetrization $\alpha_d:\bigwedge_q^d \to \bigotimes^{ d}$ is a morphism of quantum polynomial functors. 
\end{proposition}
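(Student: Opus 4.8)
The plan is to deduce this from facts already established about the projection $p_d\colon\bigotimes^d\to\bigwedge_q^d$ and the braid-group endomorphisms of $\bigotimes^d$, by factoring $\alpha_d$ through the surjection $p_d$. First I would compute the composite $\alpha_d\circ p_d\colon\bigotimes^d\to\bigotimes^d$ objectwise. For each $n$ the map $p_d(n)\colon V_n^{\otimes d}\to\bigwedge_q^d(n)$ is the canonical quotient, sending a basis vector $e_I$ (for $I\in\Omega(n,d)$) to $\bar e_I$; composing with $\alpha_d(n)$ and invoking Lemma \ref{q_anti_sym_computaion} gives $(\alpha_d\circ p_d)(n)(e_I)=\alpha_d(n)(\bar e_I)=e_I\cdot y_d$, where $y_d=\sum_{w\in\fS_d}(-q^{-1})^{\ell(w)}T_w\in\cH_d$. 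Since the $e_I$ form a basis of $V_n^{\otimes d}$, this shows that $\alpha_d\circ p_d$ is, objectwise, the right action of $y_d$ on $V_n^{\otimes d}$.

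Next I would observe that this composite is a morphism in $\PP_q$. By the discussion of examples in Section \ref{sec:exs}, for each $w\in\fS_d$ the right action of $T_w$ defines an endomorphism $T_w\colon\bigotimes^d\to\bigotimes^d$ in $\PP_q$; since $\Hom_{\PP_q}(\bigotimes^d,\bigotimes^d)$ is a $\kk$-module, the $\kk$-linear combination $\tilde y_d:=\sum_{w\in\fS_d}(-q^{-1})^{\ell(w)}T_w$ is again an endomorphism of $\bigotimes^d$. By the previous step $\alpha_d\circ p_d=\tilde y_d$, so $\alpha_d\circ p_d\in\Hom_{\PP_q}(\bigotimes^d,\bigotimes^d)$.

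Finally I would cancel the epimorphism $p_d$. Recall $p_d$ is a morphism of quantum polynomial functors and $p_d(m)\colon V_m^{\otimes d}\to\bigwedge_q^d(m)$ is surjective for every $m$. Given any morphism $\phi\in\Hom_{\cB_d}(V_m^{\otimes d},V_n^{\otimes d})$ in $\Gamma_q^d\V$, naturality of $p_d$ and of $\tilde y_d$ yields
\begin{align*}
\alpha_d(n)\circ\bigwedge\nolimits_q^d(\phi)\circ p_d(m)
&=\alpha_d(n)\circ p_d(n)\circ\bigotimes\nolimits^d(\phi)
=\tilde y_d(n)\circ\bigotimes\nolimits^d(\phi)\\
&=\bigotimes\nolimits^d(\phi)\circ\tilde y_d(m)
=\bigotimes\nolimits^d(\phi)\circ\alpha_d(m)\circ p_d(m),
\end{align*}
and since $p_d(m)$ is surjective we may cancel it to get $\alpha_d(n)\circ\bigwedge_q^d(\phi)=\bigotimes^d(\phi)\circ\alpha_d(m)$; that is, $\alpha_d$ is natural, hence a morphism in $\PP_q$.

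I do not expect a serious obstacle here: the only genuine content is the first step, the identification $\alpha_d\circ p_d=\tilde y_d$, which is exactly the computation already carried out in Lemma \ref{q_anti_sym_computaion}, and everything after that is formal diagram-chasing using the surjectivity of $p_d$ and the $\kk$-linearity of morphism spaces in $\PP_q$.
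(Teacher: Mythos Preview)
Your proof is correct. Both your argument and the paper's rest on the same computational core, namely Lemma~\ref{q_anti_sym_computaion} (which identifies $\alpha_d(\bar e_I)$ with $e_I\cdot y_d$) together with the fact that the $T_w$ are endomorphisms of $\bigotimes^d$. The difference is in how this core is packaged. The paper works in the comodule characterization of Proposition~\ref{prop-char}: it writes down the coaction diagram for $\alpha_d$ explicitly and verifies commutativity by tracing basis elements through both paths, invoking along the way that each $T_w$ intertwines the coactions. You instead stay with the original functorial definition, factor $\alpha_d$ through the known morphism $p_d$, recognize the composite $\alpha_d\circ p_d$ as the endomorphism $\tilde y_d\in\Hom_{\PP_q}(\bigotimes^d,\bigotimes^d)$, and then cancel the epimorphism $p_d$. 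Your route is a bit more conceptual and avoids rewriting the coaction computation, at the cost of needing $p_d$ to already be known as a morphism (which the paper has just asserted). The paper's route is more self-contained but slightly more computational. Neither approach gains anything essential over the other; they are two presentations of the same idea.
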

\begin{proof}
We work with the characterization of quantum polynomial functors given by Proposition \ref{prop-char}.
We need  to check that, for any $n,m$, the following diagram commutes: 
\begin{equation}
\label{Com_check}
\xymatrix{
\bigwedge^d_q(m)\ar[d]^{\alpha_d(n)} \ar[r] & \bigwedge^d_q(n)\otimes A_q(n,m)_d\ar[d]^{\alpha_d(n)\otimes 1} \\
V_m^{\otimes d} \ar[r]    &     V_n^{\otimes d}\otimes A_q(n,m)_d
}.
\end{equation}

The quantum polynomial functor $\bigotimes^d$ gives rise to the bottom map, which for any $I\in \Omega(m,d)$, is given by
$$e_{I}\mapsto \sum_{J\in \Omega(n,d)} e_J\otimes x_{JI}. $$
It  also induces the quantum polynomial functor structure on  $\bigwedge_q^d$, and so for any $m,n$ and for any $I\in \Omega(n,d)$ the top map is given by  
$$\bar{e}_I\mapsto  \sum_{J\in \Omega(n,d)} \bar{e}_J\otimes x_{JI}, $$
where  $\bar{e}_I\in \bigwedge_q^d(m)$  and  $\bar{e}_J\in \bigwedge_q^d(n)$.  

 We start with an element $\bar{e}_I\in \bigwedge^d_q(m)$, where $I$ is strictly increasing. 
 In the diagram (\ref{Com_check}), if we go up-horizontal and then downward, then by Lemma \ref{q_anti_sym_computaion}, $\bar{e}_I$ is mapped to 
\begin{equation}
\begin{aligned}
\sum_{J\in \Omega(n,d)}e_J\cdot y_d\otimes x_{JI} &=\sum_{J\in \Omega(n,d)}\sum_{w\in \fS_d} (-q^{-1})^{\ell(w)}e_J\cdot T_w\otimes x_{JI}\\
 &=\sum_{w\in
\fS_d} (-q^{-1})^{\ell(w)} (\sum_{J\in \Omega(n,d)}e_J\cdot T_w\otimes x_{JI})\\
 &=\sum_{w\in
\fS_d} (-q^{-1})^{\ell(w)} (\sum_{J\in \Omega(n,d)}e_J\otimes x_{J(I\cdot w)})
 \end{aligned},
 \end{equation}
 where the last equality holds since $T_w$ is an endomorphism of the quantum polynomial functor  $\bigotimes^d$, and also $e_I\cdot T_w=e_{I\cdot w}$.

If we go downward and then down-horizontal, $\bar{e}_I$ is exactly mapped to  
$$\sum_{w\in
\fS_d} (-q^{-1})^{\ell(w)} (\sum_{J\in \Omega(n,d)}e_J\otimes x_{J(I\cdot w)})$$
showing the commutativity of the diagram (\ref{Com_check}).
\end{proof}


\begin{proposition}
\label{symmetrization_projection}
There exist  canonical isomorphisms 
$$
\xymatrix{(\bw_q^d)^{\sharp}\simeq \bigwedge^{d}_q,\quad  (S^d_q)^\sharp\simeq \Gamma^d_q .}$$
Under these identifications, we have the following equalities: 
$$(p_d)^{\sharp}=\alpha_d,\quad 
(q_d)^\sharp=i_d.$$
\end{proposition}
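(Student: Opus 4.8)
The plan is to realize all four functors as sub- or quotient-objects of $\bigotimes^d$ and to exploit that $\sharp$ is an exact contravariant functor which, by Lemma~\ref{Iden_Tensor_power}, fixes $\bigotimes^d$; applying $\sharp$ will then turn the quotient maps $q_d,p_d$ into inclusions into $\bigotimes^d$, and it only remains to identify their images. First I would make the identification $(\bigotimes^d)^\sharp\simeq\bigotimes^d$ of Lemma~\ref{Iden_Tensor_power} explicit: at level $n$ it is the isomorphism $(V_n^{\otimes d})^*\simeq V_n^{\otimes d}$ induced by the perfect pairing $(\cdot,\cdot)$ on $V_n^{\otimes d}$ from Section~\ref{sec-braiding}, and under it the $\kk$-dual of each operator $\rho_{d,n}(T_i)$ is $\rho_{d,n}(T_i)$ itself --- which is exactly the content of Lemma~\ref{Contragredient_Braid} (equivalently Proposition~\ref{Braid_Duality} for $w=s_i$). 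Since $S_q^d(n)$ and $\bw_q^d(n)$ are free over $\kk$, the surjections $q_d(n)$ and $p_d(n)$ split, so $\sharp$ (which acts levelwise by $f\mapsto f(n)^*$) sends $q_d,p_d$ to monomorphisms $(S_q^d)^\sharp\hookrightarrow\bigotimes^d$ and $(\bw_q^d)^\sharp\hookrightarrow\bigotimes^d$ whose images at level $n$ are the $(\cdot,\cdot)$-annihilators of $\ker q_d(n)=\sum_i\mathrm{im}(\rho_{d,n}(T_i)-q\,\id)$ and $\ker p_d(n)=\sum_i\mathrm{im}(\rho_{d,n}(T_i)+q^{-1}\id)$ respectively.

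Next I would run the self-adjointness computation. By Lemma~\ref{Contragredient_Braid} one has $(v\cdot T_i,u)=(v,u\cdot T_i)$, so the annihilator of $\ker q_d(n)$ is $\bigcap_i\ker(\rho_{d,n}(T_i)-q\,\id)=\{u\in V_n^{\otimes d}:u\cdot T_i=q\,u\text{ for all }i\}$, which is precisely $\Gamma_q^d(n)=\Hom_{\cH_d}(\chi_+,V_n^{\otimes d})$, and the inclusion of this subspace into $V_n^{\otimes d}$ is by definition $i_d(n)$. Thus $(q_d)^\sharp$ and $i_d$ are two monomorphisms into $\bigotimes^d$ with equal image as subfunctors, whence there is a unique isomorphism $(S_q^d)^\sharp\simeq\Gamma_q^d$ carrying $(q_d)^\sharp$ to $i_d$; this settles the ``symmetric half''. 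Identically, the annihilator of $\ker p_d(n)$ is $W(n):=\{u\in V_n^{\otimes d}:u\cdot T_i=-q^{-1}u\text{ for all }i\}$, so $(\bw_q^d)^\sharp$ gets identified with the subfunctor $n\mapsto W(n)$ of $\bigotimes^d$.

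It then remains to identify $W$ with $\bigwedge_q^d$ through the $q$-antisymmetrization. Since $y_d\cdot T_i=-q^{-1}y_d$ in $\cH_d$ and $\alpha_d(n)(\bar e_I)=e_I\cdot y_d$ by Lemma~\ref{q_anti_sym_computaion}, the image of $\alpha_d(n)$ lies in $W(n)$; hence $\alpha_d$ --- a morphism in $\PP_q$ by Proposition~\ref{prop-antisym} --- factors as $\bigwedge_q^d\xrightarrow{\bar\alpha_d}W\hookrightarrow\bigotimes^d$, the second arrow being $(p_d)^\sharp$ under our identification. Finally $\bar\alpha_d(n)$ is an isomorphism: on the basis $\{\bar e_I:I\in\Omega^{++}(n,d)\}$ of $\bw_q^d(n)$ it is given by $\bar e_I\mapsto e_I+(\text{a }\kk\text{-combination of the }e_J\text{ with }J\text{ a permutation of }I\text{ distinct from }I)$, and since the rearrangement classes of distinct strictly increasing tuples are disjoint this map is split injective onto a free rank-$\binom{n}{d}$ direct summand of $V_n^{\otimes d}$; as $W(n)\simeq\bw_q^d(n)^*$ is itself free of rank $\binom{n}{d}$, a split injection of free $\kk$-modules of equal finite rank is an isomorphism. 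This yields the canonical isomorphism $(\bw_q^d)^\sharp\simeq\bigwedge_q^d$ under which $(p_d)^\sharp=\alpha_d$, completing the proof. The only genuinely non-formal step is this last surjectivity of $\bar\alpha_d(n)$ onto $W(n)$ --- that the $q$-antisymmetrizer exhausts the entire $(-q^{-1})$-isotypic subspace --- which I dispose of by the rank count just given, resting on the basis of $\bw_q^d(n)$ from Lemma~\ref{q_wedge_property} and on Lemma~\ref{q_anti_sym_computaion} (through whose proof the standing hypothesis $q^2\neq-1$ enters); everything else is the bookkeeping organized around Lemmas~\ref{Contragredient_Braid}, \ref{Iden_Tensor_power} and Proposition~\ref{Braid_Duality}.
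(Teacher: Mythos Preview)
Your proof is correct and follows essentially the same approach as the paper: the symmetric half is identical (self-adjointness of $T_i-q$ via Lemma~\ref{Contragredient_Braid} to match $\ker q_d(n)^\perp$ with $\Gamma_q^d(n)$), and for the exterior half both arguments hinge on Lemma~\ref{q_anti_sym_computaion}. The one minor difference in execution is that where you first identify $(\bigwedge_q^d)^\sharp$ with the $(-q^{-1})$-isotypic subfunctor $W$ and then run a rank count to show $\bar\alpha_d$ surjects onto it, the paper instead computes directly (using Lemma~\ref{q_wedge_property}) that the dual basis element $(\bar e_I)^*$, pushed into $(V_n^{\otimes d})^*\simeq V_n^{\otimes d}$ via $(p_d(n))^*$, equals $e_I\cdot y_d=\alpha_d(n)(\bar e_I)$, which yields the isomorphism $(\bar e_I)^*\leftrightarrow \bar e_I$ and the identity $(p_d)^\sharp=\alpha_d$ in one stroke without the separate surjectivity check.
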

\begin{proof}
We first consider $\bigwedge^d_q$.   Let $\{(\bar{e}_I)^*\}_{I\in \Omega(n,d)^{++} }$ be the dual basis of $\{\bar{e}_I\}_{I\in \Omega(n,d)^{++} } $ in $(\bigwedge^d_q(n))^*$, where $\bar {e}_I=e_{i_1}\wedge e_{i_2}\wedge \cdots \wedge e_{i_n}\in \bigwedge_q^d(n)$ for  $I=(i_1,i_2,\cdots,i_n)$.  It naturally gives a set of  elements in $(V_n^{\otimes d})^*$ via the inclusion map $(\bigwedge^d_q(n))^*\to (V_n^{\otimes d})^*$. By Lemma \ref{q_wedge_property},  the element $(\bar{e}_I)^*$ can be identified with 
$$e_I \cdot y_d=\sum_{w\in \fS_d} (-q^{-1})^{\ell(w)}e_{I\cdot w} .$$

It exactly coincides with the of image of $\bar{e}_I$ after the $q$-antisymmetrization map $\alpha_d(n)$.  It implies that $(\bigwedge_q^d)^\sharp \simeq \bigwedge^d_q$ under the correspondences $(\bar{e}_I)^*\mapsto  \bar{e}_I$, moreover  $\alpha_d=(p_d)^\sharp.  $

We now consider the projection map  $q_d: \bigotimes^d\to S^d_q$.  Note that the $q$-symmetric power $S_q^d(n)$ is the quotient 
$$\frac{V_n^{\otimes d}}{\sum_{i=1}^{d-1}\rm{Im}(T_i-q)}, $$
and the $q$-divided power  $\Gamma^d_q(n)$ is the  subspace of $V_n^{\otimes d}$,
$$\bigcap_{i=1}^{d-1} {\rm{Ker}}(T_i-q). $$  
By Lemma \ref{Contragredient_Braid}, the  operator  $T_i-q:  V_n^{\otimes d}\to V_n^{\otimes d}$ is self-adjoint, with respect to the bilinear form $(,)$.  Therefore 
$\frac{V_n^{\otimes d}}{\sum_{i=1}^{d-1}\rm{Im}(T_i-q)}$ is dual to $\bigcap_{i=1}^{d-1} {\rm{Ker}}(T_i-q)$.  In particular it also implies that 
$(S_q^d)^\sharp\simeq \Gamma_q^d $ and $(q_d)^\sharp=i_d .$
  \end{proof}
  
\subsection{Definition and properties of Quantum Schur and Weyl functors}
\label{SubsectionSchurWeyl}

Let $\la=(\la_1,...,\la_s)$ be a partition.  By convention our partitions have no zero parts, so $\la_1\geq\cdots\geq\la_s>0$.  
The size of $\la$ is $|\la|:=\la_1+\cdots+\la_s$ and the length of $\la$ is $\ell(\la):=s$.  
We  depict partitions using diagrams, e.g.
$(3,2)={\tiny\yng(3,2)}$.  Let $\la'$ denote the conjugate partition.  


The canonical tableau of shape $\la$ is the  tableau with entries $1,...,|\la |$ in sequence along the rows.  For example
$$
\young(123,45)
$$
is the canonical tableau of shape $(3,2)$.  Let $\sigma_\la\in\fS_d$ 
be given by the column reading word of the canonical tableau.  For instance, if $\la=(3,2)$ then $\sigma_\la=14253$ (in one-line notation).  Define the following quantum polynomial functors of degree $d$:
\begin{align*}
\xymatrix{\bigwedge^\lambda_q}&=\xymatrix{\bigwedge_q^{\lambda_1}\otimes\cdots\otimes\bigwedge_q^{\lambda_s}}\\
S^\lambda_q&=S_q^{\lambda_1}\otimes\cdots\otimes S_q^{\lambda_s}\\
\Gamma^\lambda_q&=\Gamma_q^{\lambda_1}\otimes\cdots\otimes \Gamma_q^{\lambda_s}
\end{align*}
and the following morphisms:
\begin{align*}
\alpha_\la&=\alpha_{\lambda_1}\otimes \alpha_{\lambda_2}\otimes \cdots \otimes \alpha_{\lambda_s}\\
i_\lambda&=i_{\lambda_1}\otimes i_{\lambda_2}\otimes \cdots \otimes i_{\lambda_s}\\
p_{\la}&=p_{\lambda_1}\otimes p_{\lambda_2}\otimes \cdots \otimes p_{\lambda_s} \\
q_{\la}&=q_{\lambda_1}\otimes q_{\lambda_2}\otimes \cdots \otimes q_{\lambda_s} .
\end{align*}
We define the \textbf{quantum Schur functor} $S_\lambda$ as the image of the composition of the following morphiphs
$$
\xymatrix{
\bw_q^\la \ar[r]^{\alpha_\la} & \bigotimes^{d} \ar[r]^{T_{\sigma_\la}} & \bigotimes^{d} \ar[r]^{q_{\la'}} & S_q^{\la'},
}
$$
Define the \textbf{quantum Weyl functor}  $W_{\lambda}$ as the image of the composition of the following morphisms:
$$
\xymatrix{
\Gamma_q^{\la} \ar[r]^{i_{\la}} & \bigotimes^{d} \ar[r]^{T_{\sigma_{\la}}} & \bigotimes^{d} \ar[r]^{p_{\la'}} & \bw_q^{\la'}.
}
$$

For any partition $\lambda$,  $S^\lambda$ and $W_\lambda$ are well-defined objects in $\PP_q$. This uses that for any $n$, $S^\lambda(n)$ and $W_\lambda(n)$ are free $\kk$-module of finite rank, which we have by the remarkable results of Hashimoto and Hayashi on the freeness of quantum Schur and Weyl modules \cite[Theorem 6.19, Theorem 6.23]{HH}.

\begin{theorem}
\label{SW-functor}
For any partition $\lambda$,  we have a canonical isomorphism 
$$W_{\lambda'}\simeq  (S_{\lambda})^\sharp .$$
\end{theorem}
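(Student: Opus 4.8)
The plan is to read $(S_\la)^\sharp$ off directly from the description of $S_\la$ as the image of an explicit composite, dualizing one arrow at a time. The structural input is that a contravariant exact duality interchanges images with images of dual maps: if $h\colon A\to B$ is a morphism in $\PP_q$ with an epi--mono factorization $A\twoheadrightarrow\operatorname{im}(h)\hookrightarrow B$, then $(\operatorname{im} h)^\sharp\simeq\operatorname{im}(h^\sharp)$, where $h^\sharp\colon B^\sharp\to A^\sharp$; indeed $\sharp$ turns the epi into a mono $(\operatorname{im} h)^\sharp\hookrightarrow A^\sharp$ and the mono into an epi $B^\sharp\twoheadrightarrow(\operatorname{im} h)^\sharp$ whose composite is $h^\sharp$, and pointwise this is the elementary fact that for a split-exact sequence of finite projective $\kk$-modules dualization commutes with taking images. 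By the freeness theorems of Hashimoto--Hayashi \cite[Theorems 6.19, 6.23]{HH} every functor appearing below ($\bigotimes^d$, the quantum exterior/symmetric/divided powers and their tensor products, $S_\la$, $W_{\la'}$) takes values in finite projective $\kk$-modules, so all the relevant factorizations are admissible.

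Now $S_\la=\operatorname{im}(h)$ for $h=q_{\la'}\circ T_{\sigma_\la}\circ\alpha_\la\colon\bigwedge_q^\la\to S_q^{\la'}$, hence $(S_\la)^\sharp\simeq\operatorname{im}(h^\sharp)$ with $h^\sharp=(\alpha_\la)^\sharp\circ(T_{\sigma_\la})^\sharp\circ(q_{\la'})^\sharp$. I then identify the pieces using results already available: Proposition \ref{symmetrization_projection}, applied in each tensor factor and combined with Lemma \ref{Duality_Tensor}, gives canonical isomorphisms $(S_q^{\la'})^\sharp\simeq\Gamma_q^{\la'}$ and $(\bigwedge_q^\la)^\sharp\simeq\bigwedge_q^\la$ under which $(q_{\la'})^\sharp=i_{\la'}$ and $(\alpha_\la)^\sharp=p_\la$ (the last because $(p_d)^\sharp=\alpha_d$ and $\sharp\circ\sharp\simeq\mathrm{id}$); Lemma \ref{Iden_Tensor_power} gives $(\bigotimes^d)^\sharp\simeq\bigotimes^d$; and Proposition \ref{Braid_Duality} gives $(T_{\sigma_\la})^\sharp=T_{\sigma_\la^{-1}}$. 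Therefore $(S_\la)^\sharp$ is, up to canonical isomorphism, the image of
$$\Gamma_q^{\la'}\xrightarrow{\ i_{\la'}\ }\bigotimes^d\xrightarrow{\ T_{\sigma_\la^{-1}}\ }\bigotimes^d\xrightarrow{\ p_\la\ }\bigwedge_q^\la.$$

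Since $(\la')'=\la$, the quantum Weyl functor $W_{\la'}$ is by definition the image of $\Gamma_q^{\la'}\xrightarrow{i_{\la'}}\bigotimes^d\xrightarrow{T_{\sigma_{\la'}}}\bigotimes^d\xrightarrow{p_\la}\bigwedge_q^\la$, so the theorem reduces to the combinatorial identity $\sigma_{\la'}=\sigma_\la^{-1}$. To prove this, let $R_\la$ (resp.\ $C_\la$) be the bijection from the cells of $\la$ to $\{1,\dots,d\}$ recording the position of a cell in row-reading (resp.\ column-reading) order; since the canonical tableau of $\la$ has entries given by $R_\la$, unwinding the definition gives $\sigma_\la=R_\la\circ C_\la^{-1}$. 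Transposing diagrams identifies the cells of $\la'$ with those of $\la$ and interchanges the two reading orders, so $R_{\la'}=C_\la$ and $C_{\la'}=R_\la$, whence $\sigma_{\la'}=R_{\la'}\circ C_{\la'}^{-1}=C_\la\circ R_\la^{-1}=(R_\la\circ C_\la^{-1})^{-1}=\sigma_\la^{-1}$. The main obstacle I anticipate is not any of these steps individually but checking the coherence of the chain of canonical isomorphisms in the second paragraph --- that the identifications of sources and targets are genuinely compatible with the displayed morphisms, so that the two images agree on the nose rather than merely abstractly --- which is a routine but slightly lengthy diagram chase using the explicit formulas of Proposition \ref{symmetrization_projection} and the functoriality of $\sharp$.
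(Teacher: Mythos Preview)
Your proposal is correct and follows essentially the same route as the paper: the paper's proof simply notes that $\sigma_{\lambda'}=(\sigma_\lambda)^{-1}$ and then invokes Propositions~\ref{Braid_Duality} and~\ref{symmetrization_projection}, which is precisely your dualize-each-factor argument, only stated more tersely. Your version fills in the details the paper omits (image-duality compatibility, the use of Lemma~\ref{Duality_Tensor} to pass to tensor products, and the combinatorial verification of $\sigma_{\lambda'}=\sigma_\lambda^{-1}$), so there is no substantive difference in approach.
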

\begin{proof}
We first note that $\sigma_{\lambda'}=(\sigma_\lambda)^{-1}$.
Then the theorem follows from Proposition \ref{Braid_Duality}, \ref{symmetrization_projection}.
\end{proof}

Suppose that $\ell(\la)\leq n$.  By work of Hashimoto and Hayashi $S_\la(n)$ is the Schur module and $W_\la(n)$ is the Weyl module of the $q$-Schur algebra $S_q(n,n;d)$ (cf. Definition 6.7, Theorem 6.19, and Definition 6.21 \cite{HH}).
Let $L_{\la}$ be the socle of the functor $S_{\la'}$.  Recall that this is the maximal semisimple subfunctor of $S_{\la'}$.   

\begin{proposition}
\label{thm: F_la}
The functors $L_\la$, where $\la$ ranges over all partitions of $d$, form a complete set of representatives for the isomorphism classes of irreducible objects in $\PP_q^d$.
\end{proposition}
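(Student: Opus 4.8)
The plan is to transport the statement across the equivalence of Theorem \ref{Rep_thm} and then quote the structure theory of the $q$-Schur algebra. As the statement presupposes, we take $\kk$ to be a field, so that ``irreducible object'' and ``socle'' have their usual meaning and $\Mod{S_q(n,n;d)}$ is the category of all finite-dimensional left $S_q(n,n;d)$-modules. Fix $n\geq d$. By Theorem \ref{Rep_thm} the evaluation functor $F\mapsto F(n)$ is an equivalence $\PP_q^d\xrightarrow{\ \sim\ }\Mod{S_q(n,n;d)}$, and any equivalence of abelian categories preserves socles, so it carries each $S_{\la'}$ to the Schur module $S_{\la'}(n)$ and hence $L_\la=\mathrm{soc}(S_{\la'})$ to $\mathrm{soc}\!\bigl(S_{\la'}(n)\bigr)$. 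Since conjugation is an involution of the set of partitions of $d$, it therefore suffices to prove that $\bigl\{\,\mathrm{soc}(S_\mu(n)):\mu\vdash d\,\bigr\}$ is a complete and irredundant set of representatives of the simple $S_q(n,n;d)$-modules.

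For this I would invoke two standard facts. First, the $q$-Schur algebra $S_q(n,n;d)$ is quasi-hereditary over $\kk$ (Parshall--Wang \cite{PW}; see also \cite{DJ2}), with weight poset the set $\Lambda^+(n,d)$ of partitions of $d$ with at most $n$ parts, ordered by dominance; since $n\geq d$ this poset is exactly the set of all partitions of $d$. Second, by the results of Hashimoto--Hayashi recalled just before the proposition, for a partition $\mu$ with $\ell(\mu)\leq n$ the Schur module $S_\mu(n)$ is a costandard (induced) module for $S_q(n,n;d)$; as $\mu$ runs over all partitions of $d$ — each of which has length at most $d\leq n$ — the modules $S_\mu(n)$ run over the full list of costandard modules, without repetition. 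The general theory of quasi-hereditary algebras then gives at once that the socle of each costandard module is simple, that distinct costandard modules have non-isomorphic socles, and that these socles exhaust the simple modules up to isomorphism. Combined with the reduction above, this is precisely the assertion of the proposition.

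The only step requiring real care is the input from Hashimoto--Hayashi identifying the functor-theoretic module $S_\mu(n)$ with a costandard module of $S_q(n,n;d)$ — including matching left/right-module conventions and the dominance order (possibly up to conjugation) — but this is exactly the content of the results quoted before the proposition, and Theorem \ref{SW-functor}, which asserts $W_{\la'}\cong(S_\la)^\sharp$, is a convenient consistency check reflecting the standard duality between (conjugate) Weyl and Schur modules. One could instead try to reprove the requisite structural facts directly inside $\PP_q$ in the spirit of Friedlander--Suslin, but quoting the $q$-Schur literature is far more economical.
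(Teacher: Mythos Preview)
Your proposal is correct and follows essentially the same approach as the paper: reduce via the equivalence of Theorem \ref{Rep_thm} to the module category of the $q$-Schur algebra, then invoke the Hashimoto--Hayashi structure theory. The paper simply cites Lemma 8.3 and Proposition 8.4 of \cite{HH} directly, whereas you unpack the same content by naming the quasi-hereditary framework and citing \cite{PW} for it; this is a difference in exposition rather than in mathematical route.
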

\begin{proof}
By Theorem \ref{Rep_thm}, $\PP_q^d \cong \Mod{S_q(n,n;d)}$ for any $n\geq d$.  To prove the  statement it suffices to show that $\{L_\la(n)\}$ form a complete set of representatives for irreducible $S_q(n,n;d)$-modules.  This follows from Lemma 8.3 and Proposition 8.4 in \cite{HH}.    
\end{proof}

%
%
%
%
%
%

\section{Invariant theory of quantum general linear groups}
\label{sec-invariants}
In this section, we assume $\kk$ is algebraically closed and $q$ is a generic element in $\kk$.  Our aim is to show that the theory of quantum polynomial functors affords a streamlined derivation of the invariant theory of the quantum general linear groups $\sO_q({\rm GL}_n)$, with significantly simpler proofs.  Essentially, the proofs are immediate consequences of the representability theorem (Theorem \ref{Rep_thm}).  

 Recall that $\sO_q({\rm GL}_n)$ is the localization of $A_q(n,n)$ by the quantum determinant, 
$$
{\rm det}_q:=\sum_{\sigma \fS_n}(-q^{-1})^{\ell(\sigma)}x_{1\sigma(1)}\cdots x_{n\sigma(n)}.
$$  
$\sO_q({\rm GL}_n)$ is a Hopf algebra, and we denote its antipode by $\iota$.  For more details a good source is Chapter 5 of \cite{PW}.

Following Howe's approach to classical invariant theory (cf.\cite{Ho}), we first prove a quantum analog of $({\rm GL}_m,{\rm GL}_n)$ duality.  In the classical case Howe's proof is based on a geometric argument that the matrix space is spherical \cite{Ho} (although one can give also combinatorial proofs using the Cauchy decomposition formula\footnote{We thank the anonymous referee for pointing this out to us.}).  While this geometric argument fails in the quantum case, we show that Quantum $({\rm GL}_m,  {\rm GL}_n)$ duality is a direct consequence of the Theorem \ref{Rep_thm}.  We then show that, as in the classical case, quantum analogs of the first fundamental theorem and Schur-Weyl duality follow from Quantum $({\rm GL}_m,  {\rm GL}_n)$ duality.\\

By definition a representation of  $\sO_q({\rm GL}_n)$ is a right comodule $V$ of $\sO_q({\rm GL}_n)$.    A left module of the $q$-Schur algebra $S_q(n,n;d)$ is naturally a representation of $\sO_q({\rm GL}_n)$.  By analogy with the classical setting, any representation of $\sO_q({\rm GL}_n)$ coming from $S_q(n,n;d)$ is a polynomial representation  of degree $d$.

By Theorem \ref{thm: F_la} $L_\la(n)$ is an irreducible representation $\sO_q({\rm GL}_n)$ , and any irreducible representation of $\sO_q({\rm GL}_n)$ is isomorphic to $L_\la(n)$ for a unique $\la$ such that $\ell(\la)\leq n$. 

The comultiplication $\Delta:A_q(\ell,n)\to A_q(\ell,m)\otimes A_q(m,n)$
induces actions of the quantum general linear group by left and right multiplication on  quantum $m\times n$ matrices:
\begin{align*}
\mu_L':A_q(m,n)&\to\sO_q({\rm GL}_m) \otimes A_q(m,n)\\
\mu_R:A_q(m,n)&\to A_q(m,n)\otimes \sO_q({\rm GL}_n)
\end{align*} 
These maps commute and preserve degree.  We define 
$$\mu_L:=P\circ (\iota \otimes 1)\circ\mu_L'  :    A_q(m,n)  \to  A_q(m,n)\otimes \sO_q({\rm GL}_m),$$
 where $P$ is the flip map.  Then using $(\mu_L\otimes1)\circ\mu_R$, we regard $A_q(m,n)$ as a representation of $\sO_q({\rm GL}_m)\otimes \sO_q({\rm GL}_n)$. 

Given a representation $V$ of $\sO_q({\rm GL}_n)$ let $V^*$ be the contragredient represenation of $V$, i.e. twist the left coaction of $\sO_q({\rm GL}_n)$ on the dual space $V^*$ by the antipode $\iota$. 

\begin{theorem}[ Quantum $({\rm GL}_m,  {\rm GL}_n)$ duality]
As a representation of  $\sO_q({\rm GL}_m)\otimes \sO_q({\rm GL}_n)$ we have a multiplicity-free decomposition:
$$
A_q(m,n)_d \cong \bigoplus_{\la}L_\la(m)^*\otimes L_\la(n),
$$
where $\la$ runs over all partitions of $d$ such that $\ell(\la)\leq min(m,n)$.
\end{theorem}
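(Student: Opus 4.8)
The plan is to deduce the decomposition almost formally from the representability theorem (Theorem \ref{Rep_thm}), once one observes that $\PP_q^d$ is semisimple in the present situation. Since $\kk$ is algebraically closed and $q$ is generic, the $q$-Schur algebra $S_q(N,N;d)$ is split semisimple for every $N\geq d$, so by Theorem \ref{Rep_thm} the category $\PP_q^d\cong\Mod{S_q(N,N;d)}$ is semisimple with simple objects the functors $L_\la$, $\la\vdash d$ (Proposition \ref{thm: F_la}), each with $\End_{\PP_q^d}(L_\la)=\kk$; recall also that $L_\la(k)=0$ unless $\ell(\la)\leq k$.

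The key object is $\Gamma_q^{d,n}=\Hom_{\Gamma_q^d\V}(n,-)\in\PP_q^d$, which carries a right action of $S_q(n,n;d)=\End_{\Gamma_q^d\V}(n)$ by precomposition, realised through genuine endomorphisms in $\PP_q^d$. Decomposing it in the semisimple category $\PP_q^d$ gives an $S_q(n,n;d)$-equivariant isomorphism $\Gamma_q^{d,n}\cong\bigoplus_{\la\vdash d}L_\la\otimes_\kk M_\la$ with $M_\la=\Hom_{\PP_q^d}(L_\la,\Gamma_q^{d,n})$. Comparing the consequence $\Hom_{\PP_q^d}(\Gamma_q^{d,n},L_\la)\cong M_\la^{*}$ of this decomposition (together with $\End(L_\la)=\kk$) with the isomorphism $\Hom_{\PP_q^d}(\Gamma_q^{d,n},L_\la)\cong L_\la(n)$ of Proposition \ref{Yonedalemma} identifies $M_\la\cong L_\la(n)^{*}$ as $S_q(n,n;d)$-modules. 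Evaluating at $m$ yields $\Gamma_q^{d,n}(m)\cong\bigoplus_\la L_\la(m)\otimes_\kk L_\la(n)^{*}$ as a bimodule over $S_q(m,m;d)$ (acting on the first factor through the functor) and $S_q(n,n;d)$.

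Now by Proposition \ref{Natural_Iso} we have $\Gamma_q^{d,n}(m)=\Hom_{\cB_d}(V_n^{\otimes d},V_m^{\otimes d})=S_q(n,m;d)=(A_q(m,n)_d)^{*}$, and since $A_q(m,n)_d$ is free of finite rank over $\kk$ (Lemma \ref{Basis_quantum_matrix}) this gives $A_q(m,n)_d\cong(\Gamma_q^{d,n}(m))^{*}$. Using that $\Delta_{*,*,*}$ is dual to the composition $m_{*,*,*}$ in the rectangular Schur algebras (Proposition \ref{Iso_Com}, (\ref{Schur_bilinear})), one checks that under this identification the coaction $\mu_R$ on $A_q(m,n)_d$ (from $\Delta_{m,n,n}$) is $\kk$-dual to the precomposition $S_q(n,n;d)$-action on $\Gamma_q^{d,n}(m)$, and the left $A_q(m,m)_d$-coaction from $\Delta_{m,m,n}$ is $\kk$-dual to the $S_q(m,m;d)$-action coming from the functor structure. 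Dualising the displayed decomposition of $\Gamma_q^{d,n}(m)$ therefore gives
$$
A_q(m,n)_d\;\cong\;\bigoplus_{\la\vdash d}\bigl(L_\la(m)\bigr)^{*}\otimes_\kk L_\la(n)
$$
as a comodule over $A_q(m,m)_d\otimes A_q(n,n)_d$; the antipode twist built into $\mu_L=P\circ(\iota\otimes 1)\circ\mu_L'$ converts the left $A_q(m,m)_d$-coaction on each summand into precisely the right $\sO_q(\mathrm{GL}_m)$-coaction defining the contragredient representation $L_\la(m)^{*}$, making this an isomorphism of $\sO_q(\mathrm{GL}_m)\otimes\sO_q(\mathrm{GL}_n)$-representations. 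Only $\la$ with $\ell(\la)\leq\min(m,n)$ survive, and each such summand is irreducible and occurs once, giving the claimed multiplicity-free decomposition.

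The content that is ``immediate from Theorem \ref{Rep_thm}'' is thus the abstract $\kk$-linear decomposition of $A_q(m,n)_d$ with the correct multiplicity spaces $L_\la(m)^{*}$ and $L_\la(n)$; this uses only representability and semisimplicity. The main obstacle is the equivariance bookkeeping of the last step: one must track the various left/right conventions and the interaction of the antipode with $\kk$-duality carefully enough to see that the two Hopf-algebra coactions $\mu_L,\mu_R$ on $A_q(m,n)_d$ match, under $A_q(m,n)_d=(\Gamma_q^{d,n}(m))^{*}$, the module structures produced by the polynomial-functor formalism, thereby upgrading the $\kk$-linear isomorphism to an isomorphism of bi-representations.
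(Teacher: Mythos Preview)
Your proof is correct and follows essentially the same route as the paper: decompose the representable functor $\Gamma_q^{d,n}$ (the paper uses $\Gamma_q^{d,m}$, a harmless swap of the roles of $m$ and $n$) in the semisimple category $\PP_q^d$, identify the multiplicity spaces via Yoneda as $L_\la(n)^*$, evaluate at the other index, and dualise using Proposition~\ref{Natural_Iso}. The only notable difference is that you spell out the equivariance bookkeeping for $\mu_L,\mu_R$ more carefully than the paper, which simply asserts that the final isomorphism is one of $\sO_q(\mathrm{GL}_m)\otimes\sO_q(\mathrm{GL}_n)$-representations without explicitly tracking the comodule structures through the duality.
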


\begin{proof}
By Theorem \ref{Rep_thm} the category $\PP_q^d$ is equivalent to the category $\Mod{S_q(n,n;d)}$. Hence the category $\PP^d_q$ is semi-simple, and the simple objects are, up to equivalence, the functors $L_\la$ where $\la$ ranges over partitions of $d$.  (Since $q$ is generic $L_\la\cong W_\la \cong S_{\la'}$.)  By Proposition \ref{Yonedalemma} for any $m\geq 0$ there  exists a natural isomorphism  
$\Hom_{\PP_q}(\Gamma_q^{d,m},L_\lambda)\simeq L_\lambda(m)$. Moreover, $L_\la(m)=0$ if $m>\ell(\la)$.  Hence we have the following decomposition
\begin{align*}
\Gamma_q^{d,m} & \cong \bigoplus_{\la}  L_\la\otimes \Hom_{\PP_q^d}(L_\la,\Gamma_q^{d,m}) \\
& \cong \bigoplus_{\la} L_\la\otimes  \Hom_{\PP_q^d}(   \Gamma_q^{d,m},L_\la)^*\\
& \cong \bigoplus_{\la} L_\la\otimes  L_\la(m)^* ,   
\end{align*}
where the second isomorphism follows from the natural pairing 
$$\Hom_{\PP_q^d}(L_\la,\Gamma_q^{d,m}) \times \Hom_{\PP_q^d}(   \Gamma_q^{d,m},L_\la)\to \Hom_{\PP_q^d}(L_\la,L_\la)\simeq \kk. $$
Evaluating both sides at $n$ yields
\begin{equation}
\label{Howedecomp}
\Hom_{\cH_d}(V_m^{\otimes d}, V_n^{\otimes d})\cong \bigoplus_{\la} L_\la(n)\otimes L_\la(m)^*. 
\end{equation}
This proves the theorem, since 
$$A_q(m,n) \cong (S_q(n,m;d))^* \cong (\Hom_{\cH_d}(V_n^{\otimes d}, V_m^{\otimes d}))^*\simeq \bigoplus_{\la}L_\la(m)^*\otimes L_\la(n), .$$

\end{proof}

In analogy with the classical setting, Quantum $({\rm GL}_m,  {\rm GL}_n)$ duality is equivalent to quantum FFT and Jimbo-Schur-Weyl duality.  We briefly mention these connections.

Given three numbers $\ell,m,n$  define a representation of $\sO_q({\rm GL}_m)$ on 
$A_q(n,m)\otimes A_q(m,\ell)$ as follows:
\begin{align*}
\xymatrix{
A_q(n,m)\otimes A_q(m,\ell)\ar[r]^>>>>>{\mu_R\otimes\mu_L} & A_q(n,m)\otimes \sO_q({\rm GL}_m)\otimes A_q(m,\ell) \otimes \sO_q({\rm GL}_m)  \ar[d] \\
&  A_q(n,m)\otimes A_q(m,\ell)\otimes \sO_q({\rm GL}_m)
}
\end{align*}
In the above diagram, the  downward map is given by multiplication in $\sO_q({\rm GL}_m)$.

Recall that given a right comodule $a: V\to V\otimes A$ of a Hopf algebra $A$,  the space of $A$-invariants in $V$ is a subspace of $V$
$$ V^{A}:=\{  v\in V| a(v)=v\otimes 1     \}$$

\begin{theorem}[Quantum FFT]
\label{FFT}
For any $\ell,m,n$ the image of the comultiplication $$\Delta:A_q(n,\ell)\to A_q(n,m)\otimes A_q(m,\ell)$$ lies in the subspace of $\sO_q({\rm GL}_m)$-invariants, and, moreover, gives rise to a surjective map
$$
A_q(n,\ell)\to ( A_q(n,m)\otimes A_q(m,\ell))^{\sO_q({\rm GL}_m)}.
$$
\end{theorem}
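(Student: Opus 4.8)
The plan is to work one degree at a time and to derive everything from Quantum $({\rm GL}_m,{\rm GL}_n)$-duality together with a nonvanishing statement for composition of $\cH_d$-morphisms. Since $\Delta$ is graded it suffices to treat $\Delta_{n,m,\ell}\colon A_q(n,\ell)_d\to A_q(n,m)_d\otimes A_q(m,\ell)_d$ for each fixed $d$. I would first observe that this map is equivariant for the residual symmetries. Coassociativity \eqref{co-associativity} applied to $(V_n,V_m,V_m,V_\ell)$ gives
$$(\Delta_{n,m,m}\otimes 1)\circ\Delta_{n,m,\ell}=(1\otimes\Delta_{m,m,\ell})\circ\Delta_{n,m,\ell}$$
inside $A_q(n,m)_d\otimes A_q(m,m)_d\otimes A_q(m,\ell)_d$; localizing the middle factor at ${\rm det}_q$ and applying the antipode axiom $\iota(a_{(1)})a_{(2)}=\epsilon(a)1$ of $\sO_q({\rm GL}_m)$ turns this into the assertion that $\Delta(x)$ is $\sO_q({\rm GL}_m)$-coinvariant for every $x$ — this is the usual ``matrix identity $\Rightarrow$ invariance'' manipulation and proves the first claim of the theorem. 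An analogous application of coassociativity, now with the outer spaces $V_n$ and $V_\ell$, shows that $\Delta_{n,m,\ell}$ is a map of $\sO_q({\rm GL}_n)\times\sO_q({\rm GL}_\ell)$-modules.

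For surjectivity I would decompose both sides. By Quantum $({\rm GL}_m,{\rm GL}_n)$-duality, $A_q(n,\ell)_d\cong\bigoplus_\lambda L_\lambda(n)^*\otimes L_\lambda(\ell)$, the sum being over partitions $\lambda$ of $d$ with $\ell(\lambda)\le\min(n,\ell)$, and this decomposition is multiplicity free; similarly $A_q(n,m)_d\cong\bigoplus_\mu L_\mu(n)^*\otimes L_\mu(m)$ and $A_q(m,\ell)_d\cong\bigoplus_\nu L_\nu(m)^*\otimes L_\nu(\ell)$, where in the last two the group $\sO_q({\rm GL}_m)$ acts through the factor $L_\mu(m)$, respectively $L_\nu(m)^*$. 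The category being semisimple ($q$ generic, $\kk$ algebraically closed), Schur's lemma gives $(L_\mu(m)\otimes L_\nu(m)^*)^{\sO_q({\rm GL}_m)}=\Hom_{\sO_q({\rm GL}_m)}(L_\nu(m),L_\mu(m))$, which is $\kk$ if $\mu=\nu$ and $L_\mu(m)\neq 0$ and is $0$ otherwise; hence
$$\big(A_q(n,m)_d\otimes A_q(m,\ell)_d\big)^{\sO_q({\rm GL}_m)}\cong\bigoplus_{\ell(\lambda)\le\min(m,n,\ell)}L_\lambda(n)^*\otimes L_\lambda(\ell).$$
Thus $\Delta_{n,m,\ell}$ is an $\sO_q({\rm GL}_n)\times\sO_q({\rm GL}_\ell)$-equivariant map between multiplicity-free semisimple modules whose image lies in this coinvariant submodule; by Schur's lemma it is, on each $L_\lambda$-isotypic component, either an isomorphism onto the corresponding component of the target or zero, and zero is forced exactly when $\ell(\lambda)>m$. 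So surjectivity reduces to showing that $\Delta_{n,m,\ell}$ is nonzero on the $L_\lambda$-component whenever $\ell(\lambda)\le\min(m,n,\ell)$.

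To settle this I would dualize. By Propositions \ref{Natural_Iso} and \ref{Iso_Com}, the degree-$d$ map $\Delta_{n,m,\ell}$ is transpose to the composition map
$$\Hom_{\cH_d}(V_m^{\otimes d},V_n^{\otimes d})\otimes\Hom_{\cH_d}(V_\ell^{\otimes d},V_m^{\otimes d})\longrightarrow\Hom_{\cH_d}(V_\ell^{\otimes d},V_n^{\otimes d}),\qquad X\otimes Y\mapsto X\circ Y.$$
Under the isotypic decompositions of these three Hom-spaces provided by \eqref{Howedecomp}, the composite $X\circ Y$ vanishes unless $X$ and $Y$ are supported on the same partition $\lambda$; and if $\ell(\lambda)\le m$ the $\lambda$-isotypic summand of $V_m^{\otimes d}$ is nonzero (by \eqref{eq-decomp2}), so every rank-one morphism in the $\lambda$-component of $\Hom_{\cH_d}(V_\ell^{\otimes d},V_n^{\otimes d})$ factors through that summand as such a composition. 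Hence the image of the composition map is exactly $\bigoplus_{\ell(\lambda)\le\min(m,n,\ell)}(\lambda\text{-component})$, and transposing back this says precisely that $\Delta_{n,m,\ell}$ is nonzero on each required $L_\lambda$-component, finishing the proof.

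I expect the main obstacle to be the bookkeeping in the last two paragraphs: keeping straight which copy of $\sO_q({\rm GL}_m)$ acts on $A_q(n,m)$ versus $A_q(m,\ell)$ and whether it acts by left or right comultiplication and with or without the antipode twist, so that the Schur-lemma computation of the coinvariants really produces the index set $\{\lambda:\ell(\lambda)\le m\}$; and matching the abstract isotypic decomposition of the Hom-spaces with the explicit composition map carefully enough to justify that its image is exactly the low-length part. The invariance computation of the first paragraph, though formal, also needs some care with the antipode and with passing to the localization.
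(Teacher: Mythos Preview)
Your argument is correct and rests on the same two ingredients as the paper's: duality via Propositions~\ref{Natural_Iso}--\ref{Iso_Com}, and the Howe decomposition~\eqref{Howedecomp}. The paper is a bit more economical: rather than decomposing first and then checking nonvanishing on each $L_\lambda$-component, it dualizes immediately, so that surjectivity of $\Delta$ onto the invariants becomes injectivity of the composition map restricted to $\sO_q({\rm GL}_m)$-invariants, and this injectivity is then read off directly from~\eqref{Howedecomp} as the obvious inclusion $\bigoplus_{\ell(\lambda)\le \ell,m,n}L_\lambda(\ell)^*\otimes L_\lambda(n)\hookrightarrow\bigoplus_{\ell(\lambda)\le \ell,n}L_\lambda(\ell)^*\otimes L_\lambda(n)$ --- no separate nonvanishing check is needed. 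Your proof does gain something, however: you actually verify that $\operatorname{im}\Delta$ lands in the $\sO_q({\rm GL}_m)$-invariants (via coassociativity and the antipode), a point the paper takes for granted.
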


\begin{proof}
First we note that for any representation $V$ of $\sO_q({\rm GL}_m)$, by complete reducibility, we have $(V^*)^{\sO_q({\rm GL}_m)}\simeq (V^{\sO_q({\rm GL}_m)})^*$. Then taking duals, by Proposition \ref{Natural_Iso},
it suffices to show that 
the following map is injective:
\begin{equation}
\label{Dual_FFT}
(\Hom_{\cH_d}(V_\ell^{\otimes d}, V_{m}^{\otimes d})\otimes \Hom_{\cH_d}(V_m^{\otimes d},V_n^{\otimes d}))^{\sO_q({\rm GL}_m)}\to \Hom_{\cH_d}(V_\ell^{\otimes d}, V_n^{\otimes d}) ,
\end{equation}
where $\sO_q({\rm GL}_m)$ acts diagonally on the left hand side.  This follows immediately from $(\sO_q({\rm GL}_m),\sO_q({\rm GL}_n))$ duality, since by Equation \ref{Howedecomp} the above map is precisely the inclusion
$$
\bigoplus_{\ell(\la)\leq\ell,m,n}L_\la(\ell)^*\otimes L_\la(n) \to \bigoplus_{\ell(\la)\leq\ell,n}L_\la(\ell)^*\otimes L_\la(n).
$$
\end{proof}

Finally, consider tensor space $V_m^{\tn d}$.  As a representation of $\sO_q({\rm GL}_m)$ we have a decomposition
\begin{equation}
\label{Schur_Weyl}
V_m^{\otimes d}\cong \bigoplus_{\lambda} L_\lambda(m)\otimes M_{\lambda}
\end{equation}
where the $\lambda$ runs over all partitions of $d$, and $M_\lambda=\Hom_{\sO_q({\rm GL}_m)}(L_\lambda(m), V_m^{\otimes d})$.  Notice that by the construciton of $L_\la$, we have that $L_\la(m)=0$ if $\ell(\la)>m$.  Hence the sum above is over all partitions $\la$ of $d$ such that $\ell(\la)\leq m$.  Note also that $M_\la$ are naturally $\cH_d$-modules.

\begin{theorem}[Jimbo-Schur-Weyl duality]
Equation (\ref{Schur_Weyl}) is a multiplicity-free decomposition of $V_m^{\tn d}$ as an $\sO_q({\rm GL}_m)\times\cH_d$-representation.  In particular,  $M_\la$ are irreducible pairwise inequivalent $\cH_d$-modules.
\end{theorem}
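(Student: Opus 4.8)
The plan is to promote two facts already established---that $\PP_q^d$ is semisimple with simple objects $L_\la$ ($\la\vdash d$), and that $\cH_d\cong\End_{\PP_q}(\bigotimes^d)$ by Corollary \ref{Double_centrallizer}---into the asserted decomposition of $V_m^{\tn d}=(\bigotimes^d)(m)$. By semisimplicity of $\PP_q^d$ we may write, inside $\PP_q^d$,
$$
\bigotimes\nolimits^d \;\cong\; \bigoplus_{\la\vdash d} L_\la\otimes M'_\la ,\qquad M'_\la:=\Hom_{\PP_q^d}\!\big(L_\la,\bigotimes\nolimits^d\big),
$$
with $\cH_d$ acting through the second tensor factors. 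The first task is to check that $M'_\la\neq 0$ for every $\la\vdash d$, i.e.\ that every simple occurs in $\bigotimes^d$. For this, observe that for generic $q$ the $q$-symmetrizer $x_{d_i}=\sum_{w\in\fS_{d_i}}q^{\ell(w)}T_w\in\cH_{d_i}$ is a nonzero scalar multiple of an idempotent $e_+$ with $e_+T_k=qe_+$, so right multiplication by $e_+$ on $V_n^{\tn d_i}$ is a projection onto $\Gamma_q^{d_i}(n)$ natural in $n$; hence $\Gamma_q^{d_i}$ is a direct summand of $\bigotimes^{d_i}$ in $\PP_q^{d_i}$. Combined with the decomposition \eqref{eq-decomp}, this exhibits $\Gamma_q^{d,d}$ as a direct summand of a finite direct sum of copies of $\bigotimes^d$; since $\Hom_{\PP_q^d}(\Gamma_q^{d,d},L_\la)\cong L_\la(d)\neq 0$ for every $\la\vdash d$ (Proposition \ref{Yonedalemma}), every $L_\la$ occurs in $\bigotimes^d$.

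The second step is purely formal. Because $\kk$ is algebraically closed and the $L_\la$ are pairwise non-isomorphic simple objects of finite length, Schur's lemma gives $\End_{\PP_q^d}(L_\la)=\kk$ and $\Hom_{\PP_q^d}(L_\la,L_\mu)=0$ for $\la\neq\mu$. Applying this to the displayed decomposition and invoking Corollary \ref{Double_centrallizer}, we obtain an isomorphism of algebras
$$
\cH_d\;\cong\;\End_{\PP_q^d}\!\big(\bigotimes\nolimits^d\big)\;\cong\;\bigoplus_{\la\vdash d}\End_\kk(M'_\la).
$$
Hence $\cH_d$ is a product of matrix algebras whose simple modules are exactly the $M'_\la$, $\la\vdash d$; in particular the $M'_\la$ are irreducible and pairwise inequivalent $\cH_d$-modules.

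The third step transfers this to level $m$. Evaluating the decomposition of $\bigotimes^d$ at $m$ yields
$$
V_m^{\tn d}\;\cong\;\bigoplus_{\la\vdash d}L_\la(m)\otimes M'_\la ,
$$
an isomorphism of $\sO_q({\rm GL}_m)$-comodules---evaluation of a quantum polynomial functor at $m$ produces an $A_q(m,m)_d$-comodule---that is moreover $\cH_d$-equivariant: the $\cH_d$-action on $\bigotimes^d$ is by morphisms of $\PP_q^d$, hence commutes with the coaction by Proposition \ref{prop-char}, and evaluates to the standard action $\rho_{d,m}$. Since $L_\la(m)=0$ precisely when $\ell(\la)>m$, the sum runs over $\ell(\la)\leq m$, and since for $\ell(\la),\ell(\mu)\leq m$ the $L_\la(m)$ are pairwise non-isomorphic irreducible $\sO_q({\rm GL}_m)$-comodules, computing $\Hom_{\sO_q({\rm GL}_m)}(L_\la(m),V_m^{\tn d})$ identifies $M_\la\cong M'_\la$ for $\ell(\la)\leq m$. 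Thus the $M_\la$ are irreducible and pairwise inequivalent over $\cH_d$. Finally, $V_m^{\tn d}$, being a polynomial representation of $\sO_q({\rm GL}_m)$ of degree $d$, is a module over the finite-dimensional algebra $S_q(m,m;d)$, so in degree $d$ an $\sO_q({\rm GL}_m)\times\cH_d$-representation is the same as an $S_q(m,m;d)\otimes\cH_d$-module; each summand $L_\la(m)\otimes M_\la$ is an external tensor product of finite-dimensional irreducibles over the algebraically closed field $\kk$, hence irreducible, and $L_\la(m)\otimes M_\la\not\cong L_\mu(m)\otimes M_\mu$ for $\la\neq\mu$ since already $L_\la(m)\not\cong L_\mu(m)$. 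Therefore the decomposition is multiplicity-free, as claimed.

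I expect the only genuine obstacle to be the first step---verifying that every simple $L_\la$ occurs in $\bigotimes^d$---because the evaluation functor $F\mapsto F(m)$ is not faithful for $m<d$, so the argument must be run at the level of functors (via the idempotent cutting out $\Gamma_q^{d_i}$ and the decomposition \eqref{eq-decomp} of the projective generator $\Gamma_q^{d,d}$) rather than directly on $V_m^{\tn d}$. The remaining ingredients---Schur's lemma, the $\cH_d$-equivariance of the evaluated decomposition via Proposition \ref{prop-char}, and irreducibility of external tensor products over $\kk$---are routine.
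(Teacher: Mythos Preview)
Your proof is correct, but it follows a genuinely different path from the paper. The paper derives Jimbo--Schur--Weyl duality from the quantum FFT (Theorem~\ref{FFT}): taking $n=m=\ell$ there and unpacking the decomposition \eqref{Howedecomp}, one sees that the composition map
\[
\bigoplus_{\mu}\Hom_{\cH_d}(M_\la,M_\mu)\otimes\Hom_{\cH_d}(M_\mu,M_\la)\longrightarrow \Hom_{\cH_d}(M_\la,M_\la)
\]
is injective, which forces irreducibility of each $M_\la$ and $\Hom_{\cH_d}(M_\la,M_\mu)=0$ for $\la\neq\mu$. This is done deliberately to complete the chain of equivalences $(\GL_m,\GL_n)$-duality $\Rightarrow$ FFT $\Rightarrow$ Schur--Weyl (cf.\ Remark~\ref{LastRemark}). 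Your argument instead bypasses the FFT entirely and goes straight from the double-centraliser isomorphism $\cH_d\cong\End_{\PP_q^d}(\bigotimes^d)$ of Corollary~\ref{Double_centrallizer}: decomposing $\bigotimes^d$ in the semisimple category $\PP_q^d$ and reading off $\cH_d\cong\bigoplus_\la\End_\kk(M'_\la)$ is a clean Wedderburn argument that yields the result in one stroke. The price is that you lose the logical link to FFT that the paper is advertising; the gain is a shorter, more self-contained proof. Incidentally, your first step (every $L_\la$ occurs in $\bigotimes^d$) can be shortened: for generic $q$ one has $L_\la\cong S_{\la'}$, and $S_{\la'}$ is by construction (Section~\ref{SubsectionSchurWeyl}) a subquotient of $\bigotimes^d$, hence by semisimplicity a summand.
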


\begin{proof}
We will deduce this result from the quantum FFT.  Indeed, applying  Theorem (\ref{FFT}) to the case $n=m=\ell$,   it follows that for any partition $\lambda$ of $d$ such that $\ell(\la)\leq m$, the following map is injective:
$$\bigoplus_{\mu } \Hom_{\cH_d}(M_{\lambda}, M_{\mu})\otimes \Hom_{\cH_d}(M_\mu, M_{\lambda})\to \Hom_{\cH_d}(M_{\lambda},M_\lambda) , $$
where $\mu$ runs over all partition of $d$ with $\ell(\mu)\leq m$. This implies that $M_\lambda$ is irreducible as $\cH_d$-module and for any $\lambda\not= \mu$, $M_\lambda$ and $M_\mu$ are non-isomorphic, proving the result.
\end{proof}

\begin{remark}
\label{LastRemark}
\begin{enumerate}
\item[]
\item One can easily show that Jimbo-Schur-Weyl duality implies $(\sO_q({\rm GL}_m),\sO_q({\rm GL}_m))$ duality using Proposition \ref{Natural_Iso}.  This completes the chain of equivalences, and hence the three basic theorems of quantum invariant theory $(\sO_q({\rm GL}_m),\sO_q({\rm GL}_m))$  duality, the quantum FFT, and Jimbo-Schur-Weyl duality) are all equivalent, as in the classical case done by Howe\cite{Ho}.
\item Recall our standing assumption that $q$ is generic and $\kk$ is algebraically closed.  The approach taken here essentially uses only the fact that the functors $\Gamma_q^{d,n}$ are projective generators for $n\geq d$, and this will work in any other setting of polynomial functors which has an analogous property, namely the classical and super cases \cite{FS,Ax}.  Note that in the super-case, although we don't have semisimplicity of representations in general, the tensor powers of the standard representation of $\mathfrak{gl}_{m|n}$ are semisimple \cite{BR} and so the methods here do carry over to the super case.  Therefore this approach can be used to give a new and uniform development for the classical, quantum and super invariant theories of the general linear group.
\end{enumerate}
\end{remark}

\section{Obstructions to quantum plethysm}
\label{sec:final}
Composition of quantum polynomial functors, which would provide a sought-after theory of quantum plethysm, is  absent from our theory.  
In this final section we discuss why this is the case, and further speculate on possible generalisations of our construction to allow for composition.  For convenience, we assume $\kk$ is a field. 

First we recall how composition works in the classical setting of Section \ref{sec:classical}.  Let $F\in\PP^d$ and $G\in\PP^e$.  Then $F\circ G\in\PP^{de}$ is given as follows: On objects $F\circ G(V)=F(G(V))$ and for spaces $V,W$ we define $\Hom_{\Gamma^{de} \Vect}(V,W)\to \Hom(FG(V),FG(W))$ in steps.  First consider 
$$
\Hom_{\Gamma^e\Vect}(V,W)\to \Hom(G(V),G(W)).
$$
Apply the functor $\Gamma^d$ to this linear map to obtain
$$
\Gamma^d(\Hom_{\Gamma^e\Vect}(V,W)) \to \Gamma^d(\Hom(G(V),G(W))).
$$
Note that for any space $X$ we have $\Gamma^{de}(X)\subset\Gamma^d(\Gamma^e(X))$, which is compatible with the standard embedding $\fS_d\times\fS_e \subset \fS_{de}$.  Therefore we have $\Hom_{\Gamma^{de}\Vect}(V,W)) \to \Gamma^d(\Hom(G(V),G(W)))$, which we compose with 
$$
\Gamma^d(\Hom(G(V),G(W))) \to \Hom(FG(V),FG(W))
$$
to obtain the desired map.

This construction does not generalize to the quantum setting for several reasons.  We focus on the most basic one, namely that we can't make sense of $F(G(n))$ in our construction since $G(n)$ is not an object in the quantum divided power category.  Of course we really think of $n$ as the standard Yang-Baxter space $(V_n,R_n)$, and so we should restate this problem by saying that $G(n)$ is not a Yang-Baxter space, let alone a standard one.  This suggests that we should enlarge the set of objects of the quantum divided power category.

More precisely, let $\cY$ be the category of all Yang-Baxter spaces (how we define morphisms is not important for the purposes of this discussion), and let $\cY_{st}$ be the subcategory of standard Yang-Baxter spaces.  We would like an intermediate category $\cY_{st}\subset\sC\subset\cY$ to use as the objects of the quantum divided power category $\Gamma^d\sC$.  Then we would like representations $F:\Gamma^d\sC \to \V$ to satisfy the property that for $V\in\sC$ we have $F(V)\in\sC$, allowing us to make sense of $F(G(V))$ for two such functors $F,G$.

Let's suppose such a category $\sC$ exists and try to determine some of its properties.  Perhaps the most basic quantum polynomial  functor we seek is the tensor product functor.  It turns out that a notion of tensor product is relatively easy to construct.  Indeed given a Yang-Baxter space $(V,R)\in\cY$ and any $d>0$ define $w_d\in\fS_{2d}$  by
$$
w_d(i)=\begin{cases}
i+d \text{ if } i\leq d,\\
i-d \text{ if } i>d.
\end{cases}
$$
Then it's straight-forward to verify that $T_{w_d}:V^{\otimes 2d}\to V^{\otimes 2d}$ is a Yang-Baxter operator  and hence we can define $(V,R)^{\otimes d}=(V^{\otimes d},T_{w_d})\in\cY$.

Therefore we require that $\sC$ contains, along with all the standard Yang-Baxter spaces, their tensor products $(V_n^{\tn d},T_{w_d})$.  Note that this tensor product is consistent in the sense that $(V_n^{\tn d},T_{w_d})^{\tn e}=(V_n^{\tn de},T_{w_{de}})$.  

%
%

Next we would like to define analogs of symmetric and exterior powers.  We will see that this becomes very subtle, and for this we focus on symmetric and exterior squares.  

Classically we of course have $\bigotimes^2 \cong S^2\oplus \bw^2$.  This decomposition is closely related to the fact that for any $V\in\V$ the spectrum of the flip operator $V\otimes V\to V\otimes V$ given by $v\otimes w \mapsto w\otimes v$ has spectrum $\pm1$, as long as $\dim(V)\geq2$.  In our quantum setting we also have $\bigotimes^2 \cong S_q^2\oplus \bw_q^2$ since the spectrum of $R_n$ is $\{q,-q^{-1}\}$ for $n\geq 2$. 

The corresponding spectrum for Yang-Baxter spaces in $\sC$ is much more complicated.  Indeed, consider the following table, which we computed with the help of a computer:
\begin{table}[ht] 
\centering 
\begin{tabular}{c ccc} 
\hline\hline 
YB space $V$ & Spectrum of YB operator on $V\otimes V$ \\ [0.5ex] 
\hline 
$V_2^{\tn 2}$ & $q^{-2}\to 1, -1\to 3, -q^2 \to 3, q^2 \to 4, q^4 \to 5$ \\
$V_3^{\tn 2}$ & $-q^{-2}\to 3, q^{-2}\to 9, -1\to 18, -q^2 \to 15, q^2 \to 21, q^4 \to 15 $ \\ 
$V_4^{\tn 2}$ & $q^{-4} \to 1, -q^{-2}\to15, q^{-2}\to35, -1\to60,-q^2\to45,q^2\to65,q^4\to35$ \\
$V_2^{\tn 3}$ & $-q^{-3}\to 1, q^{-1} \to 3, -q^2\to6, q^2\to6, -q^3\to8, q^3\to1,-q^5\to3$ \\
& $q^5\to9, -q^6\to10, q^6\to10, q^9\to 7$ \\
 [1ex] 
\hline 
\end{tabular} 
\label{table:nonlin} 
\end{table}

In the right column, we use the notation ``eigenvalue $\to$ multiplicity'', so for instance the Yang-Baxter operator on $V_3^{\tn 2}\otimes V_3^{\tn 2}$ has eigenvalue $-q^{-2}$ with multiplicity $3$. We see  that the spectrum of the Yang-Baxter operators on tensor squares of objects in $\sC$ does not necessarily stabilize as the dimension of the Yang-Baxter space gets big.  (Although one might speculate that if $d$ is fixed and we let $n \to \infty$ then the spectrum of the Yang-Baxter operator of $V_n^{\otimes d}$ does stabilize.) 

This  suggests that instead of just decomposing $\bigotimes^2$ into a symmetric and exterior square, we should have an infinite decomposition 
$$
\xymatrix{
\bigotimes^2=\bigoplus_{\pm,n\in\bZ}F_{\pm,n},}
$$
where $F_{\pm,n}:\Gamma^2\sC \to \V$ is given by $F_{\pm,n}(V,R)=\pm q^n$-\text{eigenspace of }$R$.  

If true, a consequence is that in order for composition to be defined, for every $(V,R)\in\sC$ we must ensure that the Yang-Baxter spaces $F_{\pm,n}(V,R)$ belong to $\sC$.  Hence also the tensor powers of $F_{\pm,n}(V,R)$ must belong to $\sC$, as well as the compositions $F_{\pm,n}\circ F_{\pm,m}(V,R)$, etc.  
It appears that the resulting theory, if it can be constructed, will be much wilder than the quantum polynomial functors considered here.  (This is perhaps not surprising as the representation theory of the braid group is known to be extremely complicated.)  One must study fundamentally different phenomenon, which are no doubt interesting but pose significant challenges.  We hope the ideas put forth here are a significant first step in this story.

\end{document}